\begin{document}

\title{{Polytopic Discontinuous Galerkin methods for the numerical modelling of flow in porous media with networks
of intersecting fractures \footnote{Paola F. Antonietti and Chiara Facciol\`a have been supported by
SIR Project n. RBSI14VT0S \lq \lq PolyPDEs: Non-conforming polyhedral finite
element methods for the approximation of partial differential equations\rq\rq ~ funded by MIUR. 
 Marco Verani has been partially supported by the Italian research grant  {\sl Prin 2012}  2012HBLYE4  \lq\lq Metodologie innovative nella modellistica differenziale numerica\rq\rq ~ and by INdAM-GNCS.}}}

\author{Paola F. Antonietti$^{\#}$, Chiara Facciol\`a$^{\#}$ and Marco Verani$^{\#}$}

\maketitle

\begin{center}
{\small
$^{\#}$ MOX- Laboratory for Modeling and Scientific Computing\\ Dipartimento di Matematica \\ Politecnico di Milano\\ Piazza Leonardo da Vinci 32, 20133 Milano, Italy\\
\vskip 0.1cm
{\tt paola.antonietti@polimi.it, chiara.facciola@polimi.it, marco.verani@polimi.it}
}
\end{center}
%


\pagestyle{myheadings}
\thispagestyle{plain}
\markboth{}
        {}

\begin{abstract}
We present a numerical approximation of Darcy's flow through a porous medium that incorporates networks of fractures with non empty intersection. Our scheme employs PolyDG methods, i.e. discontinuous Galerkin methods on general polygonal and polyhedral (polytopic, for short) grids, featuring elements with edges/faces that may be in arbitrary number
(potentially unlimited) and whose measure may be arbitrarily small. Our approach is
then very well suited to tame the geometrical complexity featured by most of applications
in the computational geoscience field. From the modelling point of view, we adopt a reduction strategy that treats
fractures as manifolds of codimension one and we employ the primal version of Darcy's law to describe the flow in both the bulk and in the fracture network. In addition, some physically consistent conditions couple the two problems, allowing for jump of pressure at their interface, and they as well prescribe the behaviour of the fluid along the intersections, imposing pressure continuity and flux conservation.
Both the bulk and fracture discretizations are obtained employing the Symmetric
Interior Penalty DG method
extended to the polytopic setting. The key instrument to obtain a polyDG approximation
of the problem in the fracture network is the generalization of the concepts of
jump and average at the intersection, so that the contribution from all the fractures
is taken into account. We prove
the well-posedness of the discrete formulation and perform an error analysis obtaining a
priori $hp$-error estimates. All our theoretical results are validated performing preliminary numerical
tests with known analytical solution.

\end{abstract}

\section{Introduction}
This work is concerned with the simulation of Darcean flows through porous media that incorporate networks of fractures with non empty intersection. The focus is on the development and analysis of a numerical approximation that employs PolyDG methods, i.e. discontinuous Galerkin methods on general polygonal and polyhedral (polytopic, for short) grids. 
In the past decades, increased attention has be given to the efficient implementation of numerical methods for fractured reservoir simulations. The analysis and prediction of the flow is indeed fundamental in many environmental and energy engineering applications, which include petroleum extraction, CO2 storage in depleted oil fields, isolation of radioactive waste and geothermal energy production, for example. In all the aforementioned applications, fractures severely affect the flow,
since they can act as barriers for the fluid (when they are filled with low permeable material), or as conduits (when they have higher permeability than the surrounding medium). Moreover, in many cases the geometry of the fault system can be highly intricate, featuring thousands of fractures, which may also intersect with small angles or be nearly coincident \cite{formaggiascottisottocasa}.

A first step towards a reduction in the complexity of the simulation is usually taken in the conceptual modelling of the flow:
since fractures usually present a small width-to-length ratio, as well as a small relative size with respect to the domain, a popular choice consists in treating them as as manifolds of codimension one. The development of this kind of reduced models has been addressed for single-phase flows in several works, for example in \cite{Alboin, Alboin2, martinjaffreroberts, Frih}. Our main reference will be the model described in \cite{martinjaffreroberts}, which considers for simplicity the case of a single fracture non-immersed in the bulk domain. In the model, the flow in the bulk is governed by Darcy's law, whereas a suitable dimensionally reduced version of the law is formulated on the surface modelling the fracture. Moreover, the exchange of fluid between the fracture and the porous medium is described via some physically consistent coupling conditions. We also remark that both low and large permeable fractures can be handled.  

Although the use of dimensionally reduced models avoids the requirement of extremely refined grids inside the fracture domains, realistic simulations still call for high mesh resolution in these areas, so that all the small geometrical features can be captured without resorting to low-quality elements in the classical sense. The mesh generation process within a classical finite elements approach can then represent a bottleneck in the whole simulation, especially in 3D, as only computational grids composed by tetrahedral/hexahedral/prismatic elements are supported. The same issue can be encountered in many application areas, ranging from fluid-structure interaction, to wave propagation problems, to name a few. This has motivated a huge effort in the past years in the design of numerical methods supporting meshes made of general
polytopic elements. A huge reduction on the computational cost may be achieved by resorting to hybrid mesh generation techniques, for example: first a (possibly structured) grid is generated independently of geometric features (e.g., fractures), secondly the elements are cut according to the required pattern. It follows that the final mesh contains arbitrarily shaped elements in the surrounding of such features and is regular far from them.
In addition to the simplicity of the procedure described, polytopic meshes present, on average, a much lower number of elements, even on relatively simple geometries, without committing a variational crime \cite{antonietti2013hp,antonietti2014domain}.

Within this framework, many numerical methods have been developed on top of polytopic meshes in the context of flows in fractured porous media. In particular, we mention \cite{AnFoScVeNi2016,formaggiascottisottocasa}, where a mixed approximation based on Mimetic Finite Differences was applied; the works \cite{BerroneVEM1,BerroneVEM2}, where virtual elements were employed to deal with flows in Discrete Fracture Networks, and  \cite{florent}, which uses the Hybrid High-Order method.
We also remark that an important alternative is given by the use of non-conforming discretizations, in which the bulk grid can be chosen fairly regular independently from the fractures, since these are considered immersed in the geometry. We refer to \cite{dangeloscotti,fumagalliscotti,reviewXFEMfratture} for the use of the eXtended Finite Element Method and to \cite{cutFem} for the Cut Finite Element Method.

In \cite{mioUnafrattura} we presented an approximation of the coupled bulk-fracture problem that employs PolyDG methods. The inherited flexibility of DG methods in handling arbitrarily shaped, non-necessarily matching, grids and elementwise variable
polynomial orders represents, in fact, the ideal setting to handle such kind of problems
that typically feature a high-level of geometrical complexity. In particular, since they employ local polynomial spaces defined
elementwise without any continuity constraint, DG methods feature a high-level of
intrinsic parallelism.  Furthermore, the lack of continuity between
neighbouring elements allows for the employment of extremely broad families of meshes containing elements with edges/faces that may be in arbitrary number (potentially unlimited) and whose measure may be arbitrarily small; cf. \cite{AntoniettiFacciolaHoustonMazzieriPennesiVerani_2020} for a comprehensive review on PolyDG methods on polyhedral grids for geophysical applications, including seismic wave propagation and fractured reservoir simulations.
The geometric flexibility highlighted so far is not the only motivation to employ such techniques in the context of fractured porous media. A more physically motivated argument is given by the discontinuous nature of the solution at the matrix-fractures interface, which can be intrinsically captured in the choice of the discrete spaces. In addition, the bulk and fractures coupling can be easily reformulated by means of the jump and average operators, which are a fundamental tool in the development of DG formulations, and thus naturally incorporated into the variational formulation. Finally, the abstract setting proposed in \cite{Arnoldbrezzicockburnmarini2002}, based on the flux-formulation, allows for the introduction of a unified framework where primal or mixed formulations can be chosen independently for both bulk and fractures, depending on the application at hand and on the quantities of interest. We refer to \cite{mioUnified} for further details on the unified analysis and to \cite{mioUnafrattura} for a focus on the primal-primal framework. In both cases, our analysis was carried on in the simplified setting of a single, non-immersed fracture. The purpose of the present work is to extend our formulation to networks of \emph{intersecting} fractures.
For simplicity, we consider, as in \cite{mioUnafrattura}, the primal-primal setting, so that we can mainly focus on handling the intersections. To this aim, we supplement the mathematical model \cite{martinjaffreroberts} with some suitable physical conditions at the intersections, prescribing the behaviour of the fluid. Following \cite{formaggiascottisottocasa,brennerhennicker, BerroneVEM2}, we impose that:
\begin{itemize}
\item pressure between fractures is continuous along the intersections;
\item flux is conserved, so that no exchange of fluid between bulk and fracture network takes place along the intersections.
\end{itemize}
We mention that more general conditions, where the angle between fractures is taken into account and jumps of
pressure across the intersection are allowed, may be imposed. Some examples can be found in \cite{flemischWohlmuth,ruffo,boon2018robust,chernyshenko2019unfitted,fumagallikeiscialo}. We also mention that the analysis of the mixed-mixed setting in the case of a totally immersed network of fractures has been addressed in
 \cite{formaggiascottisottocasa}.

From the DG-discretization point of view, the key instrument for dealing with intersections is the generalization of the concepts of jump and average. If we assume that the fracture network may be approximated by the union of $N_\Gamma$ fractures $\gamma_k$, each of which is a one co-dimensional planar manifold, i.e. $\Gamma = \bigcup_{k=1}^{N_\Gamma} \gamma_k$, the intersections correspond to lines when $d=3$ and to points when $d=2$. Let us focus for simplicity on the case $d=3$, see Figure~\ref{intro:fig: fracture network 3d} for an example. Here, the intersection line is denoted by $\mathcal{I}_\cap$ and each fracture $\gamma_k$, $k=1,2,3,4$, is characterised by the outward normal vector $\boldsymbol{\tau}_k$ at the intersection, which belongs to the plane containing the fracture.
\begin{figure}
\centering
\includegraphics[scale=0.65]{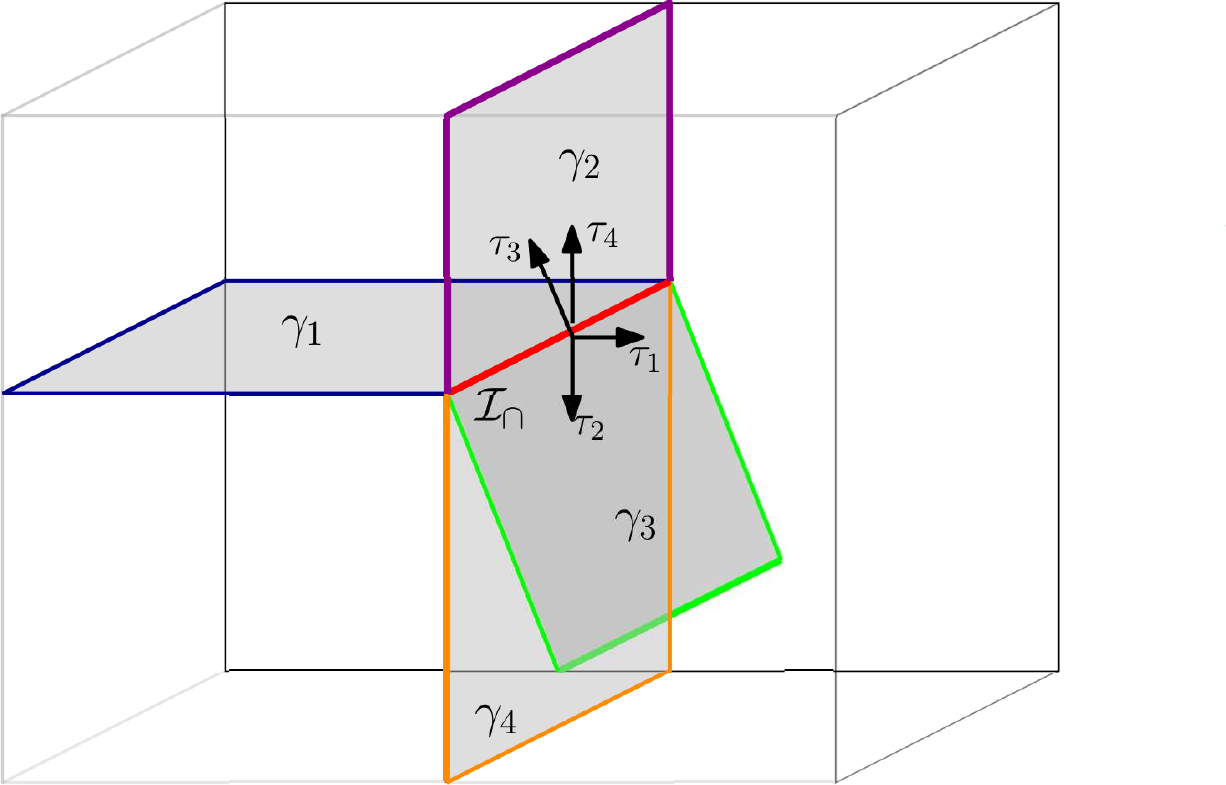}
\caption{Example of network of intersecting fractures and corresponding normal vectors for $d=3$.}
\label{intro:fig: fracture network 3d}
\end{figure}
In order to describe the pressure field in all the network, we employ the global variable $p_\Gamma=(p_\Gamma^1, \dots, p_\Gamma^{N_\Gamma})$, defined in a suitable product space of all the local fracture spaces. Our aim is to introduce some operators that are able to capture the behaviour of the function $p_\Gamma$ across the intersection line, taking into account the contribution from all the fractures, similarly to how classic jump and average operators \cite{Arnoldbrezzicockburnmarini2002}
 describe the discontinuity of a piecewise-continuous function across elemental interfaces. The main difference with respect to the standard case is that the normal vectors, contained into the definition of the operators, are not aligned. This is related to the linear DG approximation of elliptic PDEs on surfaces presented in \cite{DGcurvilineari}, then extended to high order in \cite{DGcurvihighorder}. Here, the surface is approximated by a piecewise linear surface composed of planar triangles, so that a new definition of jump and average operators is needed, to take into account the fact that the outward normal vectors of two neighbouring triangles are not, in general, opposite. Our definition is a further generalization, since it considers the intersection of an arbitrary number of planar surfaces.
Using the newly defined jump and average operators we are able to define a DG approximation for the problem in the bulk combined with a DG approximation for the problem in the fracture network, where the conditions at the intersection are imposed \lq\lq in the spirit of DG methods \rq\rq. In particular, this means that continuity is enforced \emph{penalizing the jump} of the pressure (after a suitable definition of the penalization coefficient at the intersection), while balance of fluxes is imposed \emph{naturally}, similarly to how homogeneous Neumann boundary conditions are usually enforced.
Both the bulk and fracture discretizations are obtained employing the SIPDG method extended to the polytopic setting. 

The rest of the paper is structured as follows. In Section~\ref{ntw:sec:math model}, we present the mathematical model. In Section~\ref{ntw:sec:weak form}, we introduce the weak formulation of the problem and prove its well-posedness. Section~\ref{ntw:sec:DG discr} contains the polyDG discretization of the coupled system based on the new definition of jump and average operators at intersections, which is also introduced in Section \ref{ntw:sec:DG discr}. Finally, Sections~\ref{ntw:sec:well posed} and \ref{ntw:sec: error} enclose the stability and error analysis of the discrete method. We conclude with Section~\ref{ntw:sec:numerical exp}, where we present some preliminary numerical experiments with known analytical solution, so that we are able to verify the obtained convergence rates.

\section{Mathematical model}\label{ntw:sec:math model}
We consider the domain $\Omega \subset \R^d$, with $d=2,3$, representing the porous medium. We assume that the fracture network may be approximated by a collection of one co-dimensional planar manifolds $\Gamma \subset \R^{d-1}$,  
 adopting the reduced model introduced in \cite{martinjaffreroberts} and extended to fracture networks in \cite{formaggiascottisottocasa,brennerhennicker,BerroneVEM2}. 
In particular, we consider $\Gamma$ to be the union of $N_\Gamma$ fractures $\gamma_k$,
\begin{equation}
\Gamma = \bigcup_{k=1}^{N_\Gamma} \gamma_k,
\end{equation}
with every $\gamma_k$ being an open, bounded, connected, planar $(d-1)$-dimensional orientable manifold. Each $\gamma_k$ is, in fact, the approximation of the actual fracture $\tilde{\gamma}_k$, which we assume may be characterized by
\begin{equation}
\tilde{\gamma}_k = \{ \textbf{x}+ d\textbf{n}_k, \, \mbox{for} \, \textbf{x} \in \gamma_k, \, d \in (- \frac{\ell_k(\textbf{x)}}{2}, \frac{\ell_k(\textbf{x})}{2} ) \},
\end{equation}
where $\textbf{n}_k$ is a unit normal vector to $\gamma_k$, whose precise definition is given below, and $\ell_k(\textbf{x)}$ is a $\mathcal{C}^1$ function that describes the fracture aperture. For all $k=1, \dots, N_\Gamma$, we assume there is a constant $\ell_*>0$ such that $\ell_k>\ell_*$. Finally, we denote by $\ell_\Gamma$ the aperture of the whole fracture network, meaning that $\ell_\Gamma|_{\gamma_k} =\ell_k$.

Without loss of generality for the analysis (see Remark \ref{generalizz}), we can assume that:
 \begin{enumerate}
 \item[(i)] the fracture network is connected; 
 \item[(ii)] all the fractures intersect in one point if $d=2$ or line if $d=3$;
 \item[(iii)] for each fracture, the intersection point corresponds to one of its endpoints if $d=2$ or to part of one of its facets if $d=3$.
 \end{enumerate}
We denote by $\mathcal{I}_\cap$ the intersection point/line, i.e.,
\begin{equation}
\mathcal{I}_\cap= \bigcap_{k=1}^{N_\Gamma} \bar{\gamma}_k.
\end{equation}
We assume that the angle between intersecting fractures is bounded from below, as well as
the angles between fractures and $\partial \Omega$, whenever a fracture touches the boundary. This implies, in particular,
that the number of fractures joining at the intersection is bounded.

We assume that the boundary of the bulk domain may be subdivided into two measurable subsets for the imposition of boundary conditions on the pressure and on the Darcy's velocity, that is $\partial \Omega = \partial \Omega_D \cup \partial \Omega_{N}$, with $|\partial \Omega_D|>0$. 
 This induces a subdivision of the boundary of each fracture into four different sets, some of which may be empty: $\partial \gamma_k^D = \partial \gamma_k \cap \partial \Omega_D$, $\partial \gamma_k^{N} = \partial \gamma_k \cap \partial \Omega_{N}$, the intersection tips $\partial \gamma_k ^\cap = \bigcup_{\substack{j=1 \\ j \neq k}}^{N_\Gamma}(\partial \gamma_k \cap \partial \gamma_j)$ and finally $\partial \gamma_k^F = \partial \gamma_k \setminus (\partial \gamma_k^D \cup \partial \gamma_k^{N} \cup \partial \gamma_k^\cap)$, which corresponds to the set of immersed tips. We also introduce the corresponding definitions for the network $\partial \Gamma_D =  \bigcup_{k=1}^{N_\Gamma} \partial \gamma_k^D$, $\partial \Gamma_N =  \bigcup_{k=1}^{N_\Gamma} \partial \gamma_k^N$, $\partial \Gamma_\cap =  \bigcup_{k=1}^{N_\Gamma} \partial \gamma_k^\cap$ and $\partial \Gamma_F =  \bigcup_{k=1}^{N_\Gamma} \partial \gamma_k^F$.
Some of these sets may as well be empty, and also the case of totally immersed network, i.e., $\partial \Gamma_D \cup \partial \Gamma_{N} = \emptyset$ is admitted. See Figure~\ref{fig:notazione insiemi bordo fratture 2d}-\ref{fig:notazione insiemi bordo fratture 3d} for an explicative example of the notation.
\begin{figure} 
\subfigure[]{
\label{fig:notazione insiemi bordo fratture 2d}
   \includegraphics[scale=0.65]{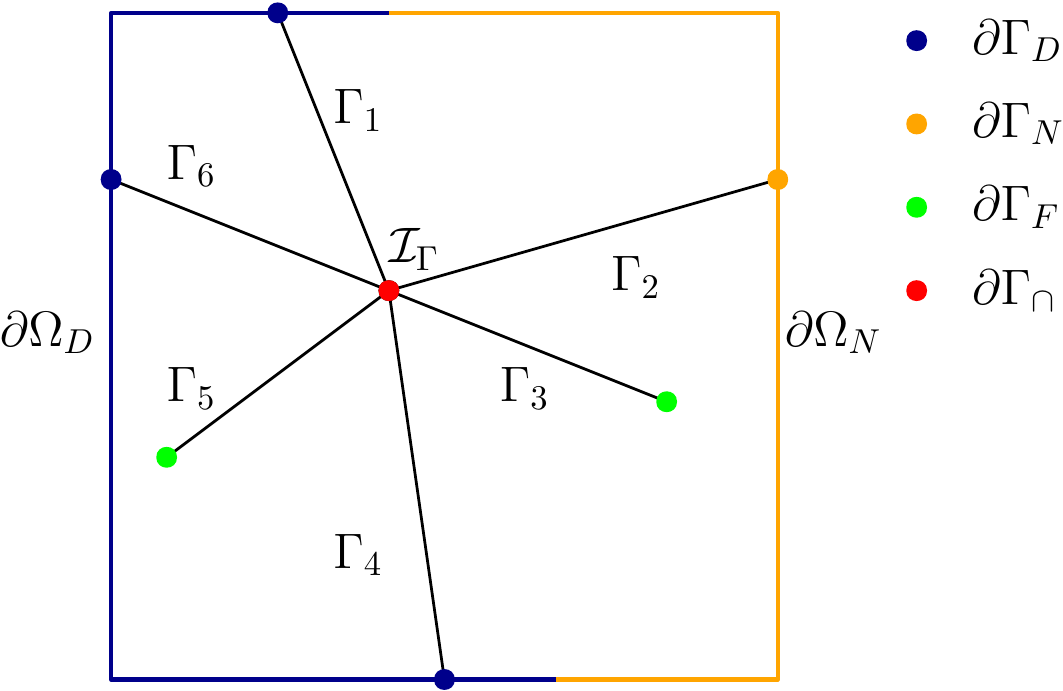}
}
\subfigure[]{
\label{fig:notazione insiemi bordo fratture 3d}
 \includegraphics[scale=0.55]{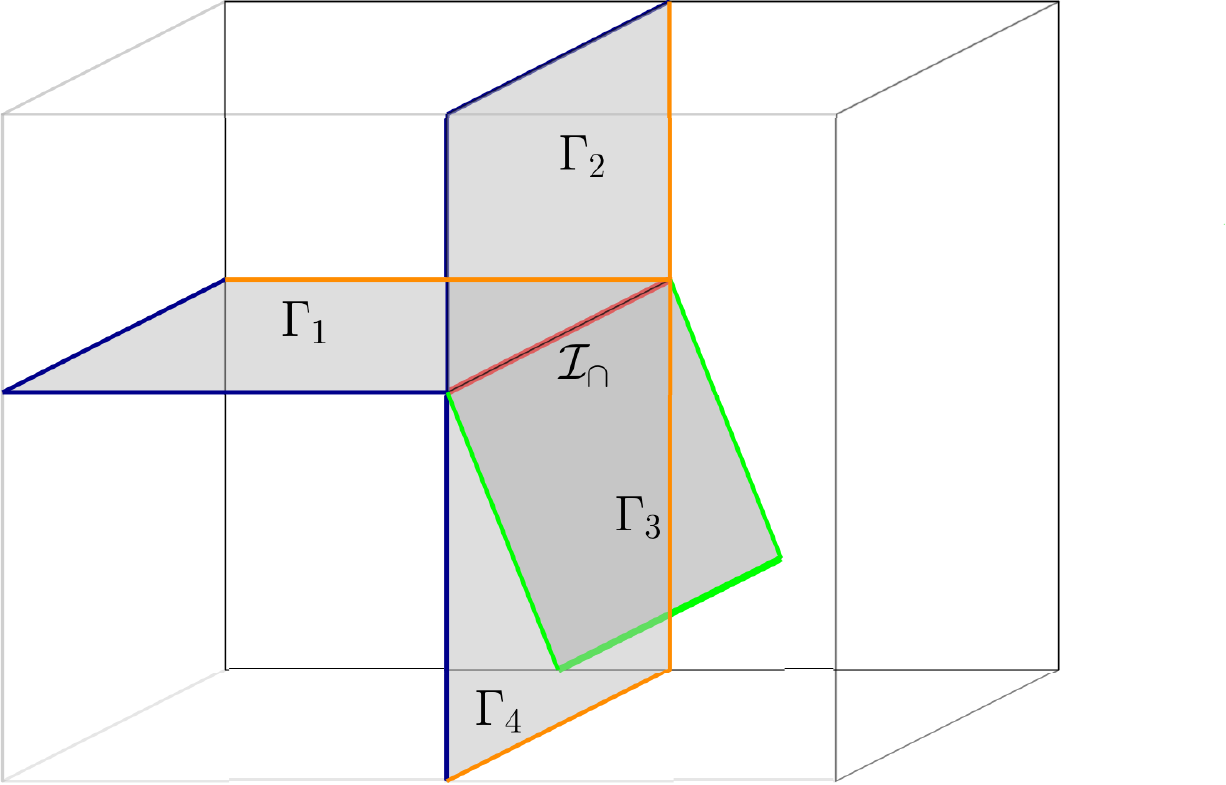} 
}
\caption{Example of fracture network satisfying the geometrical assumptions with subdivision of the boundary into sets for $d=2$ (left) and $d=3$ (right). }
\end{figure}
\begin{figure}
\subfigure[]{
\includegraphics[scale=0.6]{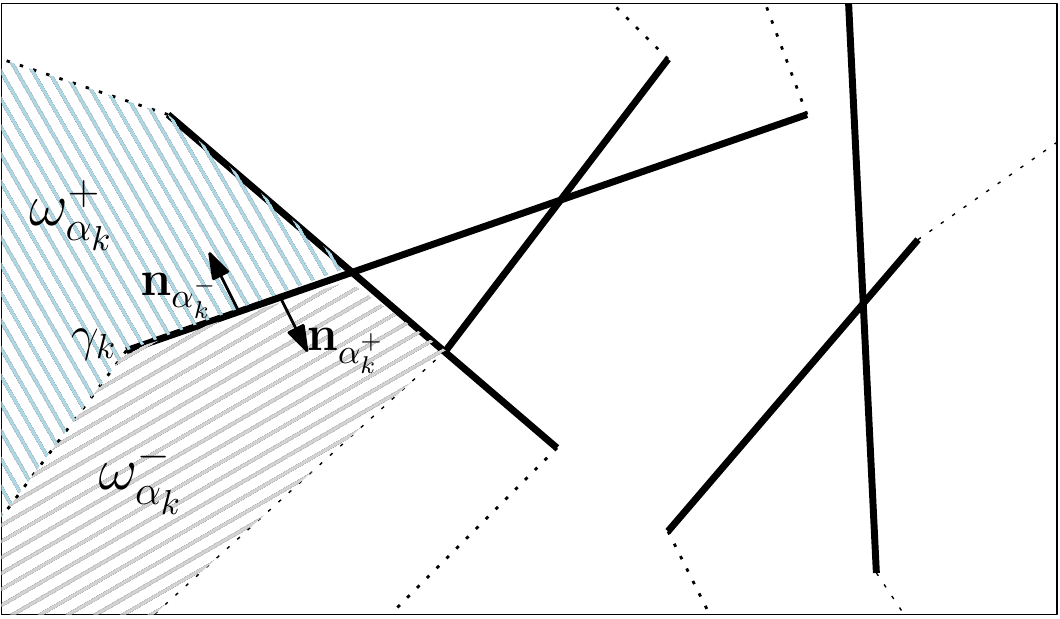}
\label{fig:partizione omega}
}
\subfigure[]{
\includegraphics[scale=0.6]{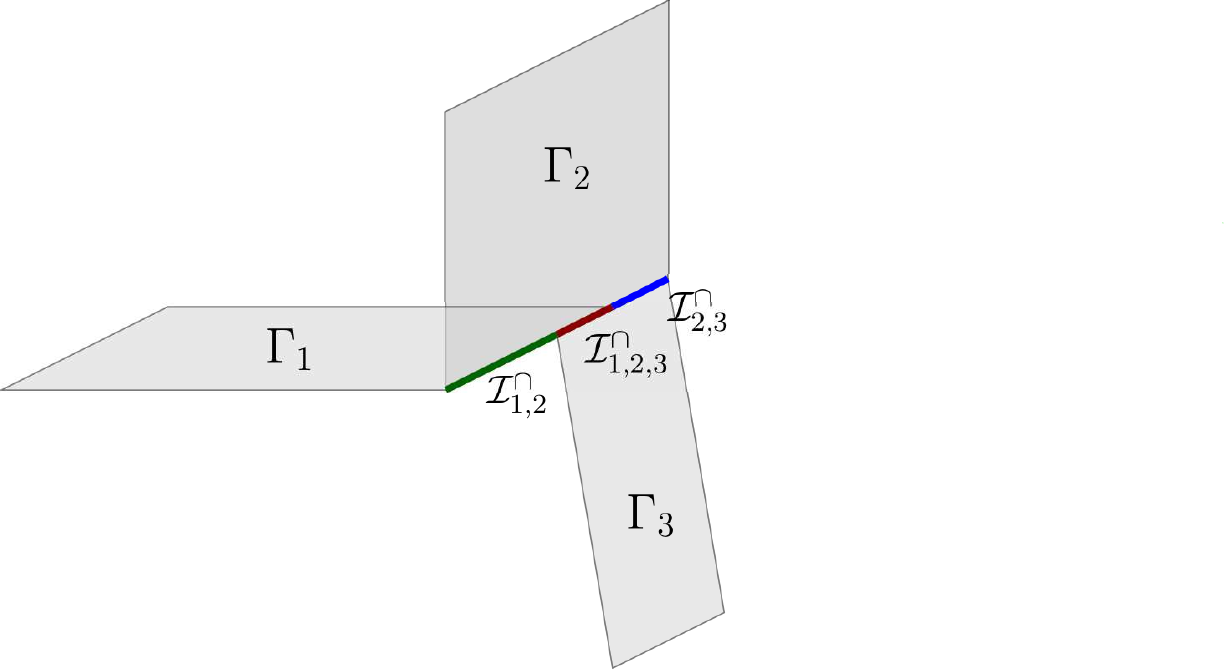}
\label{fig:multiple intersection 3d}
}
\caption{Figure~\ref{fig:partizione omega}: Partition of the domain $\Omega$ into subdomains $\omega_j$ induced by the prolongation of fractures. 
Figure~\ref{fig:multiple intersection 3d}: Example of multiple intersections for $d=3$, where an intersection is defined as a segment shared by a fixed subset of fractures. Here, we can define 3 intersections,  $\mathcal{I}_{1,2}^\cap = \partial \Gamma_1 \cap \partial \Gamma_2$, $\mathcal{I}_{1,2,3}^\cap = \partial \Gamma_1 \cap \partial \Gamma_2 \cap \partial \Gamma_3$ and $\mathcal{I}_{2,3}^\cap= \partial\Gamma_2 \cap \partial \Gamma_3$}
\end{figure}

Following the same strategy as in \cite{Angot,formaggiascottisottocasa,brennerhennicker}, we assume that the fractures can be suitably extended so that the domain $\Omega$ is partitioned into a collection of Lipschitz subdomains $\omega_j$, with $j=1,\dots ,N_\omega$, i.e., $\Omega= \bigcup_{j=1}^{N_\omega} \omega_j$, cf Figure~\ref{fig:partizione omega}.
By construction, for each fracture $\gamma_k$ we have exactly two subdomains, $\omega_{\alpha^+}$ and $\omega_{\alpha^-}$, such that $\gamma_k \subset \partial \omega_{\alpha^+}\cap \partial \omega_{\alpha^-}$. 
This implies that we can identify for each fracture $\gamma_k$ the normal $\textbf{n}_k$ defined as $\textbf{n}_k = \textbf{n}_{\alpha^+} = -\textbf{n}_{\alpha^-}$, where $\textbf{n}_\alpha$ is the unit normal vector pointing outward of the subdomain $\omega_\alpha$. Moreover, we denote by $\textbf{n}_\Gamma$ the normal to the whole fracture network, meaning that $\textbf{n}_\Gamma= \textbf{n}_k$ on $\gamma_k$.

Then, for a regular-enough scalar-valued function $q$ defined on $\Omega$, we can introduce \emph{jump} and \emph{average} across the fracture $\gamma_k \subset \partial \omega_{\alpha_k^+}\cap \partial \omega_{\alpha_k^-}$ in a standard way as
\begin{equation} \label{salto media scalar}
\llbracket q \rrbracket_{\gamma_k} = q_{\alpha_k^+} \textbf{n}_{\alpha_k^+} + q_{\alpha_k^-}  \textbf{n}_{\alpha_k^-}, \quad \quad \{q\}_{\gamma_k}= \frac{1}{2}(q_{\alpha_k^+} + q_{\alpha_k^-}), 
\end{equation} 
where $q_{\alpha_k^+}$ and $q_{\alpha_k^-}$ are the restriction to $\gamma_k$ of the traces of $q$ on $\partial \omega_{\alpha_k^+}$ and $\partial \omega_{\alpha_k^-}$, respectively. We refer to \cite{brennerhennicker} and to \cite{Angot} for a rigorous definition of the trace operators, also in the case of immersed tips. Similarly, for a regular-enough vector valued function $\textbf{v}$, we define
\begin{equation} \label{salto media vector}
\llbracket \textbf{v}  \rrbracket_{\gamma_k} = \textbf{v}_{\alpha_k^+} \cdot \textbf{n}_{\alpha_k^+} + \textbf{v}_{\alpha_k^-} \cdot \textbf{n}_{\alpha_k^-}, \quad \quad \{\textbf{v}\}_{\gamma_k}= \frac{1}{2}(\textbf{v}_{\alpha_k^+} + \textbf{v}_{\alpha_k^-}).
\end{equation}

Moreover, for given functions $f_k$ defined on $\gamma_k$, with $k=1, \dots, N_\Gamma$, we define the function $f_\Gamma$ on the network $\Gamma$, in the sense of product spaces, as  $f_\Gamma = \prod_{k=1}^{N_\Gamma} f_k$. 
We can then define the jump and average of a function $q$ across the fracture network as 
$\llbracket q  \rrbracket_\Gamma = \prod_{k=1}^{N_\Gamma} \llbracket q  \rrbracket_{\gamma_k}$ and $\{q\}_\Gamma= \prod_{k=1}^{N_\Gamma} \{ q  \}_{\gamma_k}$, respectively.

\begin{oss}\label{generalizz}
We remark that the geometric hypotheses on the fracture network were made only for the sake of simplicity and the analysis can be easily extended to more general configurations. 
More precisely, the case of a network featuring multiple connected components can be treated analogously, as long as the partition of $\Omega$ into subdomains $\omega_\alpha$ is aligned with all of them.
The case of multiple intersections is an easy extension when $d=2$, and the same holds true when $d=3$ if
we define an intersection as a segment shared by a fixed subset of fractures (see Figure~\ref{fig:multiple intersection 3d} for an explicative example). Note that we do not need to impose any condition at the point shared by two intersections, since we are assuming that no flux is present along the intersections.
\end{oss}

\subsection{Governing equations}
In what follows, we present the governing equations for our model. In accordance with our previous works, \cite{mioUnafrattura,mioUnified}, we take as a reference the model for single-phase flow derived in \cite{martinjaffreroberts}, where fractures are treated as $(d-1)$-dimensional interfaces between $d$-dimensional subdomains. In particular, we adopt the extension of the above model to fracture networks developed in \cite{formaggiascottisottocasa,brennerhennicker}. 

The flow of an incompressible fluid through a fractured $d$-dimensional porous medium, $d=2,3$, may be described by four elements:
\begin{enumerate}
\item \emph{Governing equations for the flow in the porous medium}:\\
We assume that the flow is governed by Darcy's law. We denote by $p$ the fluid pressure and by $\bnu$ the permeability tensor, which we assume to include also the dependency on the viscosity. Given a function $f \in L^2(\Omega)$ representing a source term and a function $g \in H^{-1/2}(\partial \Omega_D)$, the motion of the fluid in the bulk may be then described by the following equations 
\begin{align} \label{pb bulk}
- \nabla \cdot (\boldsymbol{\nu} \nabla p)  &= f &\mbox{in} \; \Omega \setminus \bar{\Gamma},  \\
p &= g_{D}  &\mbox{on} \; \partial \Omega_D,  \\
\boldsymbol{\nu} \nabla p \cdot \textbf{n} &=0 &\mbox{on} \; \partial \Omega_{N}, 
\end{align}
where $\textbf{n}$ is the unit normal vector pointing outward of $\Omega$. We also make some regularity assumptions on the tensor $\bnu = \bnu(\textbf{x}) \in \R^{d \times d}$, requiring that it is symmetric, positive definite, uniformly bounded from below and above and with entries that are bounded, piecewise continuous real-valued functions.

\item \emph{Governing equations for the flow in the fracture network}:\\
Darcy's law is used also for modelling the flow along the fractures. In order to obtain a reduced model, where fractures are $(d-1)$-dimensional objects immersed in a $d$-dimensional domain, a process of integration of the equations across the fracture aperture $\ell_\Gamma$ is carried on, see \cite{martinjaffreroberts}. Reduced variables for the average pressure $p_\Gamma =(p_{\Gamma}^1, \dots, p_\Gamma^{N_\Gamma})$ are then defined on each fracture. The flow is also characterized by the permeability tensor $\bnu_\Gamma=(\bnu_\Gamma^1, \dots, \boldsymbol{\nu}_{\Gamma}^{N_\Gamma})$, scaled by viscosity. It is assumed that, on each fracture, $\bnu_{\Gamma}^k$ has a block-diagonal structure of the form
\begin{equation} \label{K frattura}
\boldsymbol{\nu}_{\Gamma}^k = \begin{bmatrix} \nu_{\gamma_k}^n &  0 \\ 0 &
\boldsymbol{\nu}_{\gamma_k}^{\tau}
\end{bmatrix},
\end{equation}
when written in its normal and tangential components, $k=1, \dots, N_\Gamma$. Here, $\boldsymbol{\nu}_{\gamma_k}^{\tau} \in \R^{(d-1) \times (d-1)}$ is a positive definite, uniformly bounded tensor (it reduces to a positive number for $d=2$) representing the tangential component of the permeability of the fracture $\gamma_k$.  Given  a source term $f_{\Gamma}= (f_\Gamma^1, \dots, f_\Gamma^{N_\Gamma}) \in  \prod_{k=1}^{N_\Gamma} L^2(\gamma_k)$ and $g_{\Gamma} \in H^{1/2}(\partial \Gamma_D)$, the governing equations for the fracture flow read
\begin{align} \label{pb frattura}
 -\nabla_{\tau} \cdot (\nug \nabla_{\tau}p_{\Gamma}) &=  \ell_\Gamma f_{\Gamma} - \llbracket \bnu \nabla p  \rrbracket  &\mbox{in} \; \Gamma, \\
 p_{\Gamma} &= g_{\Gamma} &\mbox{on} \; \partial\Gamma_D, \\
(\boldsymbol{\nu}_{\Gamma}^{\tau} \ell_\Gamma \nabla_{\tau}p_{\Gamma}) \cdot \boldsymbol{\tau} &=0 &\mbox{on} \; \partial\Gamma_{N}, \\
  (\boldsymbol{\nu}_{\Gamma}^{\tau} \ell_\Gamma \nabla_{\tau}p_{\Gamma}) \cdot \boldsymbol{\tau} &=0 &\mbox{on} \; \partial\Gamma_F,
\end{align} 
Here, $\boldsymbol{\tau}=(\btau_1, \dots, \btau_{N_\Gamma})$ is defined on each fracture $\gamma_k$ as the vector in its tangent plane normal to $\partial \gamma_k$, while $\nabla_{\tau}$  and $\nabla_{\tau} \cdot$ denote the tangential gradient and divergence operators, respectively. Note that, when a certain operator is written on quantities defined on the whole network $\Gamma$, it should be interpreted as the product of the corresponding operators on each fracture $\gamma_k$.

For the condition on the immersed fracture tips, we have taken as a reference  \cite{Angot}, where the
model developed in \cite{martinjaffreroberts} has been extended to fully immersed fractures. In particular, we have imposed a homogeneous conditions for the flux, stating that the mass transfer across the immersed tips can be neglected in front of the transversal one.

\item \emph{Coupling conditions between bulk and fractures along their interfaces}:\\
Following \cite{martinjaffreroberts}, we provide the interface conditions to account for the exchange of 
fluid between the fractures and the porous medium:
\begin{align}\label{CC}
 - \{ \boldsymbol{\nu} \nabla p\} \cdot \textbf{n}_\Gamma  & = \beta_{\Gamma} \llbracket  p  \rrbracket \cdot \textbf{n}_\Gamma &\mbox{on} \; \Gamma, \\
 -\llbracket \boldsymbol{\nu} \nabla p \rrbracket  &= \alpha_{\Gamma}(\{p\}-p_{\Gamma})  &\mbox{on} \; \Gamma,
\end{align}
where we have introduced 
\begin{equation}\label{def alpha beta eta}
\beta_{\Gamma} = \frac{1}{2 \eta_{\Gamma}},\quad \quad
\alpha_{\Gamma}=\frac{2}{\eta_{\Gamma}(2 \xi-1)},\quad \quad
\eta_{\Gamma} = \frac{\ell_{\Gamma}}{\nu_{\Gamma}^n},
\end{equation}
with $\nu_{\Gamma}^n $ being the normal component of the fracture permeability tensor, see \eqref{K frattura}. Note that these conditions depend on the closure parameter $\xi \neq \frac{1}{2}$, which is related to the assumption made on the pressure profile across the fracture aperture when deriving the reduced model.

\item \emph{Conditions at the intersection}:\\
Finally, following \cite{formaggiascottisottocasa,brennerhennicker,BerroneVEM2}, at the fracture intersection $\mathcal{I}_\cap$ we enforce pressure continuity and flux conservation:

\begin{subequations} \label{eq intersezione}
\begin{align} 
p_\Gamma^1=p_\Gamma^2= \dots &=p_\Gamma^{N_\Gamma}  &\mbox{in} \; \mathcal{I}_\cap,\label{int pressure continuity}\\
\sum_{k=1}^{N_\Gamma} \bnu_{\gamma_k}^{\tau} \ell_k \nabla_{\tau}p_{\Gamma}^k \cdot \boldsymbol{\tau}_k &=0 &\mbox{in} \; \mathcal{I}_\cap.\label{int no flux}
\end{align}
\end{subequations}

We remark that other possible, more general, conditions might be imposed at the intersection. Some examples may be found in \cite{flemischWohlmuth,ruffo,boon2018robust,chernyshenko2019unfitted,fumagallikeiscialo}, where the angle between fractures at the intersection is included in the model and jumps of pressure across the intersection are allowed.
\end{enumerate}

\section{Weak formulation}\label{ntw:sec:weak form}
In this section we introduce the weak formulation of the model problem \eqref{pb bulk}-\eqref{pb frattura}-\eqref{CC}-\eqref{eq intersezione} and prove its well-posedness.

For the sake of simplicity we will assume that \emph{homogeneous} Dirichlet boundary conditions are imposed for both the bulk and fracture problems, i.e., $g_D=0$ and $g_{\Gamma}=0$. The extension to the general non-homogeneous case is straightforward. \\

First, we introduce the functional setting. We will employ the following notation. For an open, bounded domain $D \subset \R^d$, $d=2,3$, we will denote by $H^s(D)$ the standard Sobolev space of order $s$, for a real number $s \geq 0$. When $s=0$, we will write $L^2(D)$. The usual norm on $H^s(D)$ will be denoted by $|| \cdot||_{s,D}$ and the usual seminorm by $| \cdot|_{s,D}$.
  Given a decomposition of the domain into elements $\mathcal{T}_h$, we will denote by $H^s(\mathcal{T}_h)$ the standard \emph{broken} Sobolev space, equipped with the broken norm $|| \cdot||_{s, \mathcal{T}_h}$. Furthermore, we will denote by $\mathbb{P}_k(D)$ the space of polynomials of \emph{total} degree less than or equal to $k \geq 1$ on $D$.
We will employ the symbols $\lesssim $ and $\gtrsim$ meaning that the inequalities hold up to multiplicative constants that are independent of the discretization parameters, but might depend on the physical parameters.\\

Next, we introduce the functional spaces for our weak formulation. For the bulk and fracture pressure we define the spaces
\begin{align*}
Q^b &= \{ q \in H^1(\Omega \setminus \bar{\Gamma}): \; q=0 \; \mbox{on} \; \partial \Omega_D \}, \\
Q^\Gamma &= \{ q_\Gamma=(q_\Gamma^1, \dots, q_\Gamma^{N^\Gamma}) \in \prod_{k=1}^{N_\Gamma} H^1(\gamma_k): \; q_\Gamma^k = 0  \;\; \mbox{on} \;\; \partial \gamma_k^D \; \forall k=1, \dots, N_\Gamma \\  &  \quad \quad  \mbox{and} \;\; q_\Gamma^1=\dots=q_\Gamma^{N_\Gamma} \;\; \mbox{on} \; \mathcal{I}_\cap \},
\end{align*}
where the trace operators are understood. We remark that the functions in the fracture space $Q^\Gamma$ have continuous trace at the intersection. We equip the space $Q^b \times Q^\Gamma$ with the norm 
\begin{equation}
||(q,q_\Gamma)||^2 = || \boldsymbol{\nu}^{1/2} \nabla q||^2_{0,\Omega} + || (\boldsymbol{\nu}_{\Gamma}^{\tau} \ell_{\Gamma})^{1/2} \nabla_{\tau} q_{\Gamma}||_{0, \Gamma}^2 +  || \beta_{\Gamma}^{1/2} \llbracket q \rrbracket||^2_{0,\Gamma} +  || \alpha_{\Gamma}^{1/2} (\{ q\}- q_{\Gamma})||^2_{0,\Gamma},
\end{equation}
assuming from now on that $\alpha_\Gamma >0$, that is $\xi > \frac{1}{2}$, see \eqref{def alpha beta eta}. Moreover, we introduce the bilinear form $\mathcal{A}: (Q^b \times Q^\Gamma) \, \times \, (Q^b \times Q^\Gamma) \rightarrow \R$ and the linear functional $\mathcal{L}: Q^b \times Q^{\Gamma} \rightarrow \R$, defined as 
   \begin{align*}
 \mathcal{A}\left((p,p_{\Gamma}), (q, q_{\Gamma}) \right) &=  \int_{\Omega} \boldsymbol{\nu} \nabla p \cdot \nabla q   \quad + \int_{\Gamma} \boldsymbol{\nu}_{\Gamma}^{\tau} \ell_{\Gamma} \nabla_{\tau} p_{\Gamma} \cdot \nabla_{\tau} q_{\Gamma} \\
 & \quad \quad + \int_{\Gamma} \beta_{\Gamma}  \llbracket p \rrbracket \cdot  \llbracket q \rrbracket +   \int_{\Gamma} \alpha_{\Gamma} (\{ p\}- p_{\Gamma})(\{q\}- q_{\Gamma}) \\
 \mathcal{L}(q,q_{\Gamma}) &=\int_{\Omega} fq +  \int_{\Gamma}\ell_\Gamma f_{\Gamma} q_{\Gamma}.
   \end{align*}
With the above notation, the weak formulation of the model problem \eqref{pb bulk}-\eqref{pb frattura}-\eqref{CC}-\eqref{eq intersezione} reads as follows: Find $(p, p_{\Gamma}) \in Q^b \times Q^{\Gamma}$ such that, for all $(q,q_{\Gamma}) \in Q^b \times Q^{\Gamma} $ 
 \begin{equation} \label{pb continuo weak}
 \mathcal{A}\left((p,p_{\Gamma}), (q, q_{\Gamma}) \right) = \mathcal{L}(q,q_{\Gamma}).
 \end{equation}
We remark that the equivalence, in the distributional sense, of problem \eqref{pb continuo weak} to the model problem \eqref{pb bulk}-\eqref{pb frattura}-\eqref{CC}-\eqref{eq intersezione} can be proved using standard distributional arguments. In particular, if we focus on the problem on the fracture network, 

We can now prove the following well-posedness result.
\begin{teo} \label{well-posed weak}
Let $\xi>1/2$. Then, problem \eqref{pb continuo weak} is well-posed. 
\end{teo}

\begin{proof}
The statement is a direct consequence of Lax-Milgram Theorem and of the regularity of the forcing terms.
\end{proof}

We remark that the choice of considering a primal-primal setting for both the bulk and fracture problems is made here only for the sake of simplicity. 
We refer to \cite{formaggiascottisottocasa} for the analysis of the mixed-mixed formulation in the case of a \emph{totally immersed} network of fractures.

Next, we focus on the numerical discretization of the problem based on polyDG methods.

\section{PolyDG discretization}\label{ntw:sec:DG discr}
In this section we present a numerical discretization for the coupled bulk-network problem that is based on DG methods on polytopic grids. In particular, we discretize both the bulk and fracture network problems in primal form, employing the Simmetric Interior Penalty DG method \cite{Arnold82,Wheeler78}. The key idea to obtain a DG discretization will be the generalization of the concepts of jump and average at the intersection point/line, so that we will be able to impose the conditions at the intersection \eqref{eq intersezione} in the spirit of DG methods. In particular, pressure continuity will be enforced penalizing the jump at the intersection, while balance of fluxes will be imposed \lq\lq naturally\rq\rq. \\

We start with the introduction of the notation related to the polytopic discretization of the domains.
For the problem in the bulk, consider a family of meshes $\mathcal{T}_h$ made of disjoint open \emph{polygonal/polyhedral} elements which are aligned with the fracture network $\Gamma$ and also with the decomposition of $\Omega$ into subdomains $\omega_\alpha$, $\alpha =1, \dots, N_\omega$. In particular, any element $E \in \mathcal{T}_h$ cannot be cut by $\Gamma$, and, since the subdomains $\omega_\alpha$ are disjoint, each element $E$ belongs exactly to one these subdomains.

In order to admit hanging nodes, following \cite{poligoni1,poligoni2,antonietti2015review,libropoligoni}, we introduce the concept of mesh \emph{interfaces}, which are defined to be the intersection of the $(d-1)$-dimensional facets of neighbouring elements. When $d=3$, interfaces consists in general polygons and we assume that it is possible to subdivide each interface into a set of co-planar triangles. We denote the set of all these triangles, which we call \emph{faces}. When $d=2$, the interfaces of an element simply consists of line segments, so that the concepts of faces and interfaces coincide. We still denote by $\F$ the set of all faces. Following \cite{poligoni1,poligoni2,antonietti2015review,libropoligoni}, no limitation is imposed on either the number of faces of each polygon $E \in \mathcal{T}_h$ or on the relative size of element faces compared to its diameter. \\
Each mesh $\mathcal{T}_h$ induces a subdivision of each fracture in the network $\gamma_k$ into faces, which we will denote by $\gamma_{k,h}$, for $k=1, \dots, N_\Gamma$. The collection of all the fracture faces is denoted by $\Gamma_h$, i.e., $\Gamma_h = \cup_{k=1}^{N_\Gamma} \gamma_{k,h}$. This implies that the set of all the faces $\mathcal{F}_h$ may be decomposed into three subsets, namely, 
\[ \mathcal{F}_h = \mathcal{F}_h^I \cup \mathcal{F}_h^B \cup \Gamma_h,\]
where  $\mathcal{F}_h^B$ is the set of boundary faces, $\Gamma_h$ is the set of fracture faces defined above, and $\mathcal{F}_h^I$ is the set of interior faces not belonging to the fracture. In addition, we have $\mathcal{F}_h^B = \mathcal{F}_h^D \cup \mathcal{F}_h^N$, where $\mathcal{F}_h^D$ and $\mathcal{F}_h^N$ are the boundary faces contained in $\partial \Omega_D$ and $\partial \Omega_N$, respectively (we assume the decomposition to be matching with the partition of $\partial \Omega$ into $\partial \Omega_D$ and $\partial \Omega_N$). 

The induced discretization of the fractures $\Gamma_h$ contains the faces of the elements of $\T$ that share part of their boundary with one ore more fractures, so that $\Gamma_h$ is made up of line segments when $d=2$ and of triangles when $d=3$. We observe that, when $d=3$, the triangles are not necessarily shape-regular and they may present hanging nodes, due to the fact that the sub-triangulations of each elemental interface is chosen independently from the others. For this reason, we need to extend the concept of \emph{interface} also to the $(d-2)$-dimensional facets of elements in $\Gamma_h$, defined again as intersection of boundaries of two neighbouring elements. When $d=2$, the interfaces reduce to points, while when $d=3$ they consists of line segments. We denote by $\mathcal{E}_{\Gamma,h}$ the set of all the interfaces (edges) of the elements in $\Gamma_h$, and we write, accordingly to the previous notation, 
\[\mathcal{E}_{\Gamma,h} =  \mathcal{E}_{\Gamma,h}^I \cup \mathcal{E}_{\Gamma,h}^B \cup \mathcal{E}_{\Gamma,h}^F  \cup \mathcal{E}_{\Gamma,h}^\cap,\]
where:
\begin{itemize}
\item $\mathcal{E}_{\Gamma,h}^I$ is the set of interior edges;
\item $\mathcal{E}_{\Gamma,h}^B=\mathcal{E}_{\Gamma,h}^D \cup \mathcal{E}_{\Gamma,h}^N$ is the set of edges belonging to the boundaries of the fracture network $\partial \Gamma_D$ and $\partial \Gamma_N$, respectively;
\item $ \mathcal{E}_{\Gamma,h}^F$ is the set of edges belonging to the immersed tips of the network;
\item  $\mathcal{E}_{\Gamma,h}^\cap$ is the set of edges on the intersection of the fractures. Note that, since we are considering a network with one single intersection, when $d=2$ this set consists only of one single point.
\end{itemize}
We will also write $\mathcal{E}_{\gamma_k,h}^*$, with $* \in \{I,B,F,\cap \}$, to denote the restriction of each of these sets to the fracture $\gamma_k$.

For each element $E \in \mathcal{T}_h$, we denote by $|E|$ its measure, by $h_E$ its diameter and we set $h= \max_{E \in \mathcal{T}_h} h_E$. Moreover, given an element $E \in \mathcal{T}_h$, for any face/edge $F \subset \partial E$ we define $\textbf{n}_F$ as the unit normal vector on $F$ that points outward of $E$.
 
Next, we recall that, for scalar and vector-valued functions $q$ and $\textbf{v}$ that are piecewise smooth on $\mathcal{T}_h$, it holds on every $F \in \F \setminus \F^B$:
 \begin{equation} \label{prima magic}
\llbracket q \textbf{v}  \rrbracket = \llbracket \textbf{v}  \rrbracket \{q \} + \{ \textbf{v}\} \cdot \llbracket q  \rrbracket ,
 \end{equation}
where jump and average operators are defined similarly to \eqref{salto media scalar} and \eqref{salto media vector}. 
If we define on $\F^B$
\begin{equation} \label{media salto bordo}
 \llbracket q \rrbracket = q \textbf{n}_F, \quad \quad 
\{\textbf{v}\}=  \textbf{v},
\end{equation}
identity \eqref{prima magic} implies the following well-known formula \cite{Arnold82}:
\begin{equation}\label{magic formula classica}
\sum_{E \in \mathcal{T}_h} \int_{\partial E} q \textbf{v}\cdot \textbf{n}_E = \int_{\mathcal{F}_h} \{\textbf{v}\} \cdot \llbracket q  \rrbracket + \int_{\mathcal{F}_h \setminus \mathcal{F}_h^B}\llbracket \textbf{v}  \rrbracket \{ q\}, 
\end{equation} 
where we have used the compact notation $\int_{\mathcal{F}_h} = \sum_{F \in \mathcal{F}_h} \int_F$.

Analogous definitions may be also stated for the fractures. In particular, given an element $F \in \Gamma_h$, with measure $|F|$ and diameter $h_F$, for any edge $e \subset \partial F$, with $e \in \E$, we define $\textbf{n}_e$ as the unit normal vector on $e$ pointing outward of F (it reduces to $\pm 1$ when $d=2$). Finally, standard jump and average operators across edges $e \in \E^I \cup \E^B$ can be defined for (regular enough) scalar and vector-valued functions and an analogous version of formula \eqref{magic formula classica} can be stated, which we will generalize to intersection edges in Proposition \ref{prop:magic formula generalizzata} below.

\subsection{Discrete formulation}
For simplicity in the forthcoming analysis, we will suppose that the permeability tensors $\boldsymbol{\nu}$ and $\boldsymbol{\nu}_{\Gamma}$ are piecewise  \emph{constant} on mesh elements, i.e., $\boldsymbol{\nu}|_E \in [\mathbb{P}_0(E)]^{d \times d}$ for all $E \in \mathcal{T}_h$, and $\boldsymbol{\nu}_{\Gamma}|_F \in [\mathbb{P}_0(F)]^{(d-1) \times (d-1)}$ for all $F \in \Gamma_h$. \\

First, we introduce the finite-dimensional spaces where we will set our discrete problem. For the problem in the bulk we define the broken polynomial space
\begin{equation}
Q_h^b =\{ q \in L^2(\Omega): \; q|_E \in \mathbb{P}_{k_E} (E)\; \forall E\in \mathcal{T}_h\}, \quad \quad k_E \geq 1, \, \forall E \in \mathcal{T}_h.
\end{equation}
Similarly, on each fracture $\gamma_k$, for $k=1, \dots N_\Gamma$, we define the space
\begin{equation}
Q_h^{\gamma_k}=\{q_{\Gamma}^k \in  L^2(\gamma_k):  \; q_{\Gamma}^k|_F \in \mathbb{P}_{k_F} (F) \; \forall F \in \gamma_{h,k}\} \quad \quad k_F \geq 1, \, \forall F \in \gamma_{h,k},
\end{equation}
so that on the fracture network we can introduce the product space
\begin{equation}
Q_h^\Gamma = \prod_{k=1}^{N_\Gamma} Q_h^{\gamma_k}.
\end{equation}
For future use in the analysis, we also introduce the DG vector-valued spaces
\begin{align*}
\textbf{W}_h^b &= \{ \textbf{v} \in [L^2(\Omega)]^d: \; \textbf{v}|_E \in [\mathbb{P}_{k_E} (E)]^d\; \forall E\in \mathcal{T}_h\},  \; &k_E \geq 1, \, \forall E \in \mathcal{T}_h,\\
\textbf{W}_h^{\gamma_k} &= \{ \textbf{v}_{\Gamma}^k \in [L^2(\Gamma)]^{d-1}: \; \textbf{v}_{\Gamma}^k|_F \in [\mathbb{P}_{k_F} (F)]^{d-1}\, \forall F\in \gamma_{h,k}\},  &k_F \geq 1, \, \forall F \in \gamma_{h,k}, \\
\textbf{W}_h^\Gamma &= \prod_{k=1}^{N_\Gamma} \textbf{W}_h^{\gamma_k}.
\end{align*}

In order to derive a DG discrete formulation of problem \eqref{pb continuo weak}, we make the following regularity assumption.
\begin{ass}\label{salti grad nulli}
We assume that the exact solution $(p,p_\Gamma)$ of problem \eqref{pb continuo weak} is such that:
\begin{itemize}
\item[A1. ] $p \in Q^b \cap H^2(\T)$ and $p_\Gamma \in Q^\Gamma \cap H^2(\Gamma_h)$;
\item[A2. ] the normal components of the exact fluxes $\bnu \nabla p$  and $\ell_\Gamma \bnu_\Gamma^\tau \nabla p_\Gamma$ are continuous across mesh interfaces, that is $\llbracket \bnu \nabla p  \rrbracket   =0$ on  $ \F^I$ and $\llbracket \ell_\Gamma \bnu_\Gamma^\tau \nabla p_\Gamma  \rrbracket   =0$ on $\E^I$.
\end{itemize}
\end{ass}
Moreover, for the forthcoming analysis, we introduce the following extended continuous spaces 
\begin{align}
Q^b(h)&= Q_h^b \oplus \big( Q^b \cap H^2(\T) \big) \label{spazio esteso bulk}\\
Q^\Gamma(h)&= Q_h^\Gamma \oplus \big( Q^\Gamma \cap H^2(\Gamma_h) \big). \label{spazio esteso fracture}
\end{align}

In order to derive a DG formulation for the problem in the bulk, we proceed as in \cite{mioUnafrattura,mioUnified}. We obtain the following: Find $p_h \in Q_h^b$ such that for every test function $q \in Q_h^b$ it holds
\begin{multline}\label{formulazione dopo flussi bulk}
\int_{\mathcal{T}_h} \boldsymbol{\nu} \nabla p_h \cdot \nabla q -\int_{\mathcal{F}_h^I \cup \mathcal{F}_h^D} \{ \bnu \nabla p_h\} \cdot \llbracket q  \rrbracket  -\int_{\mathcal{F}_h^I \cup \mathcal{F}_h^D} \{ \bnu \nabla q\} \cdot \llbracket p_h  \rrbracket  +\int_{\mathcal{F}_h^I \cup \mathcal{F}_h^D} \sigma_F \llbracket p_h  \rrbracket \cdot \llbracket q  \rrbracket \\
+\int_{\Gamma_h} \beta_{\Gamma} \llbracket p_h  \rrbracket \cdot \llbracket q  \rrbracket   + \int_{\Gamma_h} \alpha_{\Gamma}(\{p_h\}-p_{\Gamma})\{q\} = \int_{\mathcal{T}_h} fq - \int_{\mathcal{F}_h^D} (\bnu \nabla q \cdot\textbf{n}_F -\sigma_F q)g_D,
\end{multline}
where we have introduced the discontinuity penalization parameter $\sigma$, which is a non-negative bounded function, i.e., $ \sigma \in L^{\infty}(\mathcal{F}_h^I \cup \mathcal{F}_h^D)$. Its precise definition will be given in Definition \ref{defi sigma bulk} below.\\

Next, we derive a DG discrete formulation for the problem on the fracture network. 
For generality, we will write our formulation referring to the case $d=3$. However, the expressions are valid also when $d=2$, provided that, when the domain of integration reduces to a point, the integrals are interpreted as evaluations.First, we focus on a single fracture $\gamma_k$. Given a face $F \in \gamma_{k,h}$, we multiply the first equation in \eqref{pb frattura} for a test function $q_\Gamma^k \in Q_h^{\gamma_k}$ and integrate over $F$. Summing over all 
$F \in \gamma_{h,k}$ and integrating by parts, we obtain
\begin{multline*}
\int_{\gamma_{h,k}} \boldsymbol{\nu}_{\gamma_k}^{\tau}  \ell_k \nabla_{\tau}p_{\Gamma}^k \cdot \nabla_{\tau}q_{\Gamma}^k - \sum_{F \in \gamma_{h,k}} \int_{\partial F} q_\Gamma^k \boldsymbol{\nu}_{\gamma_k}^{\tau}  \ell_k \nabla_{\tau}p_{\Gamma}^k \cdot \textbf{n}_F  \\
= \int_{\gamma_{h,k}} \ell_k f_\Gamma^k q_\Gamma^k - \int_{\gamma_{h,k}}  \alpha_{\Gamma}(\{p\}-p_{\Gamma}^k)q_\Gamma^k,
\end{multline*}
where we have used the second coupling condition in \eqref{CC} to rewrite $- \llbracket \bnu \nabla p \rrbracket= \alpha_{\Gamma}(\{p\}-p_{\Gamma}^k)$ in the source term.
If we sum over all the fractures $\gamma_k$ in the network and use identity \eqref{magic formula classica} on each fracture $\gamma_k$, we get
\begin{multline*}
\int_{\Gamma_h} \boldsymbol{\nu}_{\Gamma}^{\tau}  \ell_{\Gamma} \nabla_{\tau}p_{\Gamma} \cdot \nabla_{\tau}q_{\Gamma} -  \int_{\mathcal{E}_{\Gamma,h}^I} \llbracket  \boldsymbol{\nu}_{\Gamma}^{\tau}  \ell_{\Gamma} \nabla_{\tau}p_{\Gamma}  \rrbracket \{ q_\Gamma \}  -  \int_{\mathcal{E}_{\Gamma,h}^I \cup \mathcal{E}_{\Gamma,h}^B} \{ \boldsymbol{\nu}_{\Gamma}^{\tau}  \ell_{\Gamma} \nabla_{\tau}p_{\Gamma} \} \cdot \llbracket q_\Gamma  \rrbracket \\ 
- \sum_{k=1}^{N_\Gamma} \Big[ \int_{\mathcal{E}_{\gamma_k,h}^F}  q_\Gamma^k \boldsymbol{\nu}_{\gamma_k}^{\tau}  \ell_k \nabla_{\tau}p_{\Gamma}^k \cdot \boldsymbol{\tau}_k + \int_{\mathcal{E}_{\gamma_k,h}^\cap} q_\Gamma^k \boldsymbol{\nu}_{\gamma_k}^{\tau}  \ell_k \nabla_{\tau}p_{\Gamma}^k \cdot \boldsymbol{\tau}_k \Big]
\\=  \int_{\Gamma_h} \ell_\Gamma f_\Gamma q_\Gamma - \int_{\Gamma_h}  \alpha_{\Gamma}(\{p\}-p_{\Gamma})q_\Gamma,
\end{multline*}
where we recall that $\boldsymbol{\tau}_k$ is the vector tangent to the fracture $\gamma_k$, pointing outward of $\partial \gamma_k$, and $\llbracket \cdot  \rrbracket $ and $\{\cdot\}$ are the standard jump and average operators defined in \eqref{salto media scalar}, \eqref{salto media vector} and \eqref{media salto bordo}.
In order to treat the term defined on the intersection 
\begin{equation}\label{integrale intersezioni}
\sum_{k=1}^{N_\Gamma} \int_{\mathcal{E}_{\gamma_k,h}^\cap} q_\Gamma^k \boldsymbol{\nu}_{\gamma_k}^{\tau}  \ell_k \nabla_{\tau}p_{\Gamma}^k \cdot \boldsymbol{\tau}_k,
\end{equation}
we will now extend the definition of jump and average operators to the case when a number of planes intersect along one line ($d=3$) or when a number of segments intersect in one point ($d=2$).

\subsubsection{Jump and average operators at the intersections}\label{ntw:sec: salto media intersezione}
Let $\underline{b}=(b_1,b_2, \dots,b_{N_\Gamma})$ and $\underline{\textbf{a}}= (\textbf{a}_1, \textbf{a}_2, \dots , \textbf{a}_{N_\Gamma})$ be a scalar and vector-valued functions defined on the network $\Gamma$ (product space), such that for every $k=1, \dots, N_\Gamma$ the traces of $b_k$ and $\textbf{a}_k$ are well defined on the intersection $\mathcal{I}_\cap$.  Moreover, for $k=1, \dots, N_\Gamma$, let $\boldsymbol{\tau}_k$ be the vector tangent to the fracture $\gamma_k$, pointing outward of the intersection point/line $\mathcal{I}_\cap$. 
\begin{defi} \label{salto media intersezione}
We define \emph{jump} and \emph{average} operators for $\underline{\textbf{a}}$ and $\underline{b}$ at $\mathcal{I}_\cap$ as
\begin{align}
\{ \underline{b}\}_\cap &= \frac{1}{N_\Gamma}(b_1+b_2+\dots +b_{N_\Gamma}) \\
\llbracket \underline{b}  \rrbracket_\cap &=  \big(b_i - b_k\big)_{i,k \in \{1,2,\dots,N_\Gamma\}, \,  i<k} \\
\{ \underline{\textbf{a}} \}_\cap &= \frac{1}{N_\Gamma} \big( \textbf{a}_i \cdot \btau_i - \textbf{a}_k \cdot \btau_k \big)_{i,k \in \{1,2,\dots,N_\Gamma\}, \,  i<k}\\
\llbracket \underline{\textbf{a}} \rrbracket_\cap &= \textbf{a}_1 \cdot \btau_1+\textbf{a}_2 \cdot \btau_2 +\dots + \textbf{a}_{N_\Gamma} \cdot \btau_{N_\Gamma}, 
\end{align}
where trace operators on $\mathcal{I}_\cap$ are understood.
\end{defi}

We remark that $\{\underline{b}\}_\cap$ and $\llbracket \underline{\textbf{a}} \rrbracket_\cap$ are scalar-valued, while $\llbracket \underline{b}  \rrbracket_\cap$ and $\{ \underline{\textbf{a}}\}_\cap$ are vector-valued, taking values in $ \in \R^{\binom{N_\Gamma}{2}}$. In particular, for the definition of
 $\llbracket \underline{b}  \rrbracket_\cap$ and $\{ \underline{\textbf{a}}\}_\cap$ we take all the pairs of indices in $\{1,\dots,N_\Gamma\}$ such that the first index is smaller than the second one. This is just one possible way of indicating all the pairs of fractures. Accordingly, these vectors contain $\binom{N_\Gamma}{2}=\frac{N_\Gamma(N_\Gamma-1)}{2}$ elements. For example, for $N_\Gamma=4$, we have
\[\llbracket \underline{b}  \rrbracket_\cap = (b_1-b_2, b_1-b_2, b_1-b_4, b_2-b_3, b_2-b_4, b_3-b_4) \in \R^6, \]
while, for $N_\Gamma=5$, we have
\[ \llbracket \underline{b}  \rrbracket_\cap = (b_1-b_2, b_1-b_2, b_1-b_4, b_1-b_5, b_2-b_3, b_2-b_4, b_2-b_5, b_3-b_4, b_3-b_5, b_4-b_5) \in \R^{10}. \]
The vector-valued case is analogous. 
Note also that, when $N_\Gamma=2$, these definitions coincide with the definitions of jump and average operators introduced in \cite{DGcurvilineari,DGcurvihighorder}, for the generalization of DG methods to curved surfaces. Indeed we have
\begin{alignat}{5}
\{\underline{b}\}_\cap &= \frac{1}{2}(b_1+b_2), &\quad \quad  \llbracket \underline{b}\rrbracket_\cap &= b_1-b_2,\\
\{\underline{a}\}_\cap &= \frac{1}{2}(\textbf{a}_1 \cdot \btau_1 -\textbf{a}_2 \cdot \btau_2), &\quad \quad  \llbracket \underline{a} \rrbracket_\cap &= \textbf{a}_1 \cdot \btau_1 + \textbf{a}_2 \cdot \btau_2.
\end{alignat}

Definition \ref{salto media intersezione} allows us to find an equivalent version of identity \eqref{prima magic} on the intersection:
\begin{prop} \label{prop:magic formula generalizzata}
The following identity holds
\begin{equation} \label{magic formula generalizzata}
\llbracket \underline{b} \underline{\textbf{a}} \rrbracket_\cap =  \llbracket \underline{\textbf{a}}  \rrbracket_\cap \{\underline{b}\}_\cap + \{ \underline{\textbf{a}}\}_\cap \cdot \llbracket \underline{b}  \rrbracket_\cap,
\end{equation}
where the vector-valued function $\underline{b \, \textbf{a}}$ is defined as $\underline{b \, \textbf{a}}= (b_1 \, \textbf{a}_1, b_2 \, \textbf{a}_2, \dots , b_{N_\Gamma} \textbf{a}_{N_\Gamma})$ and $\cdot$ is the standard scalar-product in $\R^{\binom{N_\Gamma}{2}}$.
\end{prop}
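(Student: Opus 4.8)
The plan is to prove \eqref{magic formula generalizzata} as a purely algebraic identity, after reducing both sides to the scalar quantities $a_k := \textbf{a}_k\cdot\btau_k$ and $b_k$, $k=1,\dots,N_\Gamma$. First I would rewrite the left-hand side directly from Definition~\ref{salto media intersezione}:
\[
\llbracket \underline{b}\,\underline{\textbf{a}} \rrbracket_\cap = \sum_{k=1}^{N_\Gamma} (b_k\textbf{a}_k)\cdot\btau_k = \sum_{k=1}^{N_\Gamma} a_k b_k,
\]
so that the whole claim reduces to showing that the two terms on the right-hand side add up to $\sum_k a_k b_k$.

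Next I would expand the right-hand side. The first term is a product of two scalars, $\llbracket \underline{\textbf{a}}\rrbracket_\cap\{\underline{b}\}_\cap = \frac{1}{N_\Gamma}\big(\sum_k a_k\big)\big(\sum_j b_j\big)$, while the second is the Euclidean inner product in $\R^{\binom{N_\Gamma}{2}}$ of two vectors indexed by the pairs $i<k$, namely $\{\underline{\textbf{a}}\}_\cap\cdot\llbracket \underline{b}\rrbracket_\cap = \frac{1}{N_\Gamma}\sum_{i<k}(a_i-a_k)(b_i-b_k)$.

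The heart of the argument is the combinatorial bookkeeping of this pair-sum. Expanding $(a_i-a_k)(b_i-b_k)$ and summing over unordered pairs, I would use that each index appears in exactly $N_\Gamma-1$ pairs, so the diagonal part yields $(N_\Gamma-1)\sum_k a_k b_k$, while the mixed part reproduces the full off-diagonal sum $\sum_{i\neq j}a_i b_j = \big(\sum_i a_i\big)\big(\sum_j b_j\big) - \sum_k a_k b_k$. Collecting these gives $\{\underline{\textbf{a}}\}_\cap\cdot\llbracket \underline{b}\rrbracket_\cap = \sum_k a_k b_k - \frac{1}{N_\Gamma}\big(\sum_i a_i\big)\big(\sum_j b_j\big)$; adding the first term then cancels the product contribution and leaves exactly $\sum_k a_k b_k$, which matches the left-hand side.

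I do not anticipate any genuine obstacle: this is an identity in finite-dimensional linear algebra, and the only delicate point is the double counting in the pair-sums, i.e.\ correctly matching $\sum_{i<k}$ with the associated sum over ordered index pairs and tracking the multiplicity $N_\Gamma-1$ with which each index occurs. The mechanism that makes the identity close is that the cross term $\frac{1}{N_\Gamma}(\sum a)(\sum b)$ enters the two right-hand contributions with opposite signs, so that the averaging factor $1/N_\Gamma$ in the definitions of $\{\underline{b}\}_\cap$ and $\{\underline{\textbf{a}}\}_\cap$ is precisely what is needed for the cancellation.
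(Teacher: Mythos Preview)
Your proposal is correct and follows essentially the same approach as the paper: both compute the left-hand side directly from the definition, expand the two right-hand terms separately, and verify that their sum reproduces $\sum_k b_k\,\textbf{a}_k\cdot\btau_k$. Your organization is slightly cleaner---introducing the scalar shorthand $a_k=\textbf{a}_k\cdot\btau_k$ upfront and making the $\frac{1}{N_\Gamma}(\sum a)(\sum b)$ cancellation explicit---but the underlying combinatorial bookkeeping (each index appearing in $N_\Gamma-1$ pairs, the passage from $\sum_{i<k}$ to $\sum_{i\neq k}$) is identical to the paper's.
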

\begin{proof}
By definition we have 
\begin{equation}
\llbracket \underline{b} \underline{\textbf{a}}\rrbracket_\cap = \sum_{k=1}^{N_\Gamma} b_k \textbf{a}_k \cdot \btau_k.
\end{equation}
Moreover, we can write
\begin{equation}
\llbracket \underline{\textbf{a}} \rrbracket_\cap \{\underline{b}\}_\cap = \frac{1}{N_\Gamma} \Big( \sum_{k=1}^{N_\Gamma} b_k\Big)  \Big(\sum_{j=1}^{N_\Gamma} \textbf{a}_j \cdot \btau_j \Big) =  \frac{1}{N_\Gamma}  \sum_{k=1}^{N_\Gamma}( b_k  \textbf{a}_k \cdot \btau_k )+ \frac{1}{N_\Gamma} \sum_{k=1}^{N_\Gamma} (b_k \sum_{\substack{j=1 \\ j \neq k}}^{N_\Gamma} \textbf{a}_j \cdot \btau_j ),
\end{equation}
while we have
\begin{align*}
 \{ \underline{\textbf{a}}\}_\cap \cdot \llbracket \underline{b}  \rrbracket_\cap &= 
 \frac{1}{N_\Gamma}\sum_{k=1}^{N_\Gamma} \sum_{j=k+1}^{N_\Gamma} (b_k-b_j)(\textbf{a}_k \cdot \btau_k - \textbf{a}_j \cdot \btau_j) \\ &= \frac{1}{N_\Gamma} \sum_{k=1}^{N_\Gamma} \sum_{\substack{j=1 \\ j\neq k}}^{N_\Gamma} (b_k-b_j) \textbf{a}_k \cdot \btau_k \\
 &= \frac{1}{N_\Gamma} \sum_{k=1}^{N_\Gamma} \sum_{\substack{j=1 \\ j \neq k}}^{N_\Gamma} b_k \textbf{a}_k \cdot \btau_k -\frac{1}{N_\Gamma}  \sum_{k=1}^{N_\Gamma} \Big( \textbf{a}_k \cdot \btau_k  \sum_{\substack{j=1 \\ j \neq k}}^{N_\Gamma} b_j \Big)\\ 
 &= \frac{1}{N_\Gamma} \sum_{k=1}^{N_\Gamma} (N_\Gamma-1) b_k \textbf{a}_k \cdot \btau_k -\frac{1}{N_\Gamma}  \sum_{k=1}^{N_\Gamma} b_k \Big( \sum_{\substack{j=1 \\ j \neq k}}^{N_\Gamma} \textbf{a}_j \cdot \btau_j \Big). 
\end{align*}
This implies 
\begin{align*}
\llbracket \underline{\textbf{a}}  \rrbracket_\cap \{\underline{b}\}_\cap + \{ \underline{\textbf{a}}\}_\cap \cdot \llbracket \underline{b}  \rrbracket_\cap &= \frac{1}{N_\Gamma} \sum_{k=1}^{N_\Gamma} b_k \textbf{a}_k \cdot \btau_k + \frac{1}{N_\Gamma} \sum_{k=1}^{N_\Gamma} (N_\Gamma-1)b_k \textbf{a}_k \cdot \btau_k \\ & = \frac{1}{N_\Gamma} \sum_{k=1}^{N_\Gamma} N_\Gamma  b_k \textbf{a}_k \cdot \btau_k,
\end{align*}
and the proof is concluded.
\end{proof}

Now we take our focus back to the derivation of a DG discrete formulation for the problem in the fracture network. Using the above definition of jump and average at the intersection \ref{salto media intersezione} and identity \eqref{magic formula generalizzata}, we can rewrite \eqref{integrale intersezioni} as 
\begin{align}
\sum_{k=1}^{N_\Gamma} \int_{\mathcal{E}_{\gamma_k,h}^\cap} q_\Gamma^k \boldsymbol{\nu}_{\gamma_k}^{\tau}  \ell_k \nabla_{\tau}p_{\Gamma}^k \cdot \boldsymbol{\tau}_k &= \int_{\mathcal{E}_{\Gamma,h}^\cap} \llbracket q_\Gamma  \boldsymbol{\nu}_{\Gamma}^{\tau}  \ell_{\Gamma} \nabla_{\tau}p_{\Gamma}\rrbracket_\cap \\
 &= \int_{\mathcal{E}_{\Gamma,h}^\cap} \llbracket q_\Gamma  \rrbracket_\cap \cdot \{\boldsymbol{\nu}_{\Gamma}^{\tau}  \ell_{\Gamma} \nabla_{\tau}p_{\Gamma}\}_\cap +  \int_{\mathcal{E}_{\Gamma,h}^\cap} \llbracket  \boldsymbol{\nu}_{\Gamma}^{\tau}  \ell_{\Gamma} \nabla_{\tau}p_{\Gamma} \rrbracket_\cap\{q_\Gamma\}_\cap. 
\end{align}

\noindent The formulation on the fracture network becomes
\begin{multline} \label{formulaz fratt prima termini nulli}
\int_{\Gamma_h} \boldsymbol{\nu}_{\Gamma}^{\tau}  \ell_{\Gamma} \nabla_{\tau}p_{\Gamma} \cdot \nabla_{\tau}q_{\Gamma} 
- \int_{\mathcal{E}_{\Gamma,h}^I} \llbracket  \boldsymbol{\nu}_{\Gamma}^{\tau}  \ell_{\Gamma} \nabla_{\tau}p_{\Gamma} \rrbracket \{ q_\Gamma \} -  \int_{\mathcal{E}_{\Gamma,h}^I \cup \mathcal{E}_{\Gamma,h}^B} \{ \boldsymbol{\nu}_{\Gamma}^{\tau}  \ell_{\Gamma} \nabla_{\tau}p_{\Gamma}  \} \cdot \llbracket q_\Gamma  \rrbracket \\
   - \int_{\mathcal{E}_{\Gamma,h}^F}  q_\Gamma \boldsymbol{\nu}_{\Gamma}^{\tau}  \ell_{\Gamma} \nabla_{\tau}p_{\Gamma} \cdot \boldsymbol{\tau} -\int_{\mathcal{E}_{\Gamma,h}^\cap} \llbracket q_\Gamma  \rrbracket_\cap \cdot \{\boldsymbol{\nu}_{\Gamma}^{\tau}  \ell_{\Gamma} \nabla_{\tau}p_{\Gamma} \}_\cap - \int_{\mathcal{E}_{\Gamma,h}^\cap} \llbracket  \boldsymbol{\nu}_{\Gamma}^{\tau}  \ell_{\Gamma} \nabla_{\tau}p_{\Gamma} \rrbracket_\cap\{q_\Gamma\}_\cap \\
=  \int_{\Gamma_h} \ell_\Gamma f_\Gamma q_\Gamma - \int_{\Gamma_h}  \alpha_{\Gamma}(\{p\}-p_{\Gamma})q_\Gamma.
\end{multline}
From the fact that $p \in Q^\Gamma$ satisfies problem \eqref{pb continuo weak} and from the regularity Assumption \ref{salti grad nulli}, it holds:
\begin{itemize}
\item   $\llbracket \boldsymbol{\nu}_{\Gamma}^{\tau}  \ell_{\Gamma} \nabla_{\tau}p_{\Gamma} \rrbracket=0$ on $\mathcal{E}_{\Gamma,h}^I$;
\item $\llbracket p_\Gamma  \rrbracket=0$ on $\mathcal{E}_{\Gamma,h}^I$;
\item $\boldsymbol{\nu}_{\Gamma}^{\tau}  \ell_{\Gamma} \nabla_{\tau}p_{\Gamma} \cdot \boldsymbol{\tau} =0$ on 
$\mathcal{E}_{\Gamma,h}^F \cup \E^N$;
\item $\llbracket p_\Gamma  \rrbracket_\cap =0$ on $\mathcal{E}_{\Gamma,h}^\cap$;
\item $ \llbracket  \boldsymbol{\nu}_{\Gamma}^{\tau}  \ell_{\Gamma} \nabla_{\tau}p_{\Gamma} \rrbracket_\cap=0$ on $\mathcal{E}_{\Gamma,h}^\cap$.
\end{itemize}
It follows that, for any test function $q_\Gamma \in Q_h^\Gamma$, identity \eqref{formulaz fratt prima termini nulli} is equivalent to
\begin{multline} \label{formulaz frattura}
\int_{\Gamma_h} \boldsymbol{\nu}_{\Gamma}^{\tau}  \ell_{\Gamma} \nabla_{\tau}p_{\Gamma} \cdot \nabla_{\tau}q_{\Gamma} 
 -  \int_{\mathcal{E}_{\Gamma,h}^I \cup \mathcal{E}_{\Gamma,h}^D} \{ \boldsymbol{\nu}_{\Gamma}^{\tau}  \ell_{\Gamma} \nabla_{\tau}p_{\Gamma}  \} \cdot \llbracket q_\Gamma  \rrbracket -  \int_{\mathcal{E}_{\Gamma,h}^I \cup \mathcal{E}_{\Gamma,h}^D} \{ \boldsymbol{\nu}_{\Gamma}^{\tau}  \ell_{\Gamma} \nabla_{\tau}q_{\Gamma}  \} \cdot \llbracket p_\Gamma  \rrbracket   \\
  -\int_{\mathcal{E}_{\Gamma,h}^\cap} \{\boldsymbol{\nu}_{\Gamma}^{\tau}  \ell_{\Gamma} \nabla_{\tau}p_{\Gamma} \}_\cap \cdot  \llbracket q_\Gamma  \rrbracket_\cap   -\int_{\mathcal{E}_{\Gamma,h}^\cap} \{\boldsymbol{\nu}_{\Gamma}^{\tau}  \ell_{\Gamma} \nabla_{\tau}q_{\Gamma}\}_\cap \cdot \llbracket p_\Gamma  \rrbracket_\cap \\
  +  \int_{\mathcal{E}_{\Gamma,h}^I \cup \mathcal{E}_{\Gamma,h}^D} \sigma_e^\Gamma \llbracket p_\Gamma  \rrbracket \cdot \llbracket q_\Gamma  \rrbracket + \int_{\mathcal{E}_{\Gamma,h}^\cap} \sigma_e^\cap \llbracket p_\Gamma  \rrbracket_\cap \cdot \llbracket q_\Gamma  \rrbracket_\cap  \\
=  \int_{\Gamma_h} \ell_\Gamma f_\Gamma q_\Gamma + \int_{\Gamma_h}  \alpha_{\Gamma}(\{p\}-p_{\Gamma})q_\Gamma -  \int_{\mathcal{E}_{\Gamma,h}^D} ( \boldsymbol{\nu}_{\Gamma}^{\tau}  \ell_{\Gamma} \nabla_{\tau}q_{\Gamma} \cdot \boldsymbol{\tau} - \sigma_e^\Gamma q_\Gamma )g_\Gamma,
\end{multline}
where $\sigma^\Gamma \in L^{\infty}(\E^I \cup \E^D)$ and $\sigma^\cap \in L^{\infty}(\E^\cap)$ are discontinuity penalization parameters, whose precise definition will be given in \ref{defi sigma fracture} below.\\

In conclusion, we obtain the following discrete formulation for the coupled bulk-network problem:\\
Find $(p_h, p_{\Gamma,h}) \in Q_h^b \times Q_h^{\Gamma}$ such that
 \begin{equation} \label{discrete formulation}
 \mathcal{A}_h\left((p_h,p_{\Gamma,h}), (q, q_{\Gamma}) \right) = \mathcal{L}_h(q,q_{\Gamma}) \;\;\;\; \forall (q, q_{\Gamma}) \in  Q_h^b \times Q_h^{\Gamma},
 \end{equation}
where the bilinear form $\mathcal{A}_h: ( Q_h^b \times Q_h^{\Gamma}) \times ( Q_h^b \times Q_h^{\Gamma}) \rightarrow \R $ is defined as
\begin{equation} \label{bilinear form}
\mathcal{A}_h \left((p_h,p_{\Gamma,h}), (q, q_{\Gamma}) \right)= \mathcal{A}_b(p_h,q)+\mathcal{A}_{\Gamma}(p_{\Gamma,h}, q_{\Gamma})+ \mathcal{C}((p_h,p_{\Gamma,h}),(q, q_{\Gamma})),
 \end{equation}
 and the linear functional $\mathcal{L}_h: Q_h^b \times Q_h^{\Gamma} \rightarrow \R$ is defined as
 \begin{equation} \label{linear operator}
 \mathcal{L}_h(q,q_{\Gamma})=\mathcal{L}_b(q)+ \mathcal{L}_{\Gamma}(q_{\Gamma}),
 \end{equation}
with
\begin{align}
\mathcal{A}_b(p_h,q) & =  \int_{\T} \boldsymbol{\nu} \nabla p_h \cdot \nabla q 
 - \int_{\mathcal{F}_h^I \cup \mathcal{F}_h^D} \{ \boldsymbol{\nu} \nabla p_h \} \cdot \llbracket q \rrbracket  \\ &\quad -  \int_{\mathcal{F}_h^I \cup \mathcal{F}_h^D} \{ \boldsymbol{\nu} \nabla q \} \cdot \llbracket p_h \rrbracket +   \int_{\mathcal{F}_h^I \cup \mathcal{F}_h^D}  \sigma_F  \llbracket p_h \rrbracket \cdot  \llbracket q \rrbracket  \label{bilinear form bulk} \\ \\
\mathcal{A}_\Gamma(p_{\Gamma,h},q_\Gamma) & =\int_{\Gamma_h} \boldsymbol{\nu}_{\Gamma}^{\tau}  \ell_{\Gamma} \nabla_{\tau}p_{\Gamma,h} \cdot \nabla_{\tau}q_{\Gamma} \\
 & \; -  \int_{\mathcal{E}_{\Gamma,h}^I \cup \mathcal{E}_{\Gamma,h}^D} \{ \boldsymbol{\nu}_{\Gamma}^{\tau}  \ell_{\Gamma} \nabla_{\tau}p_{\Gamma,h}  \} \cdot \llbracket q_\Gamma  \rrbracket -  \int_{\mathcal{E}_{\Gamma,h}^I \cup \mathcal{E}_{\Gamma,h}^D} \{ \boldsymbol{\nu}_{\Gamma}^{\tau}  \ell_{\Gamma} \nabla_{\tau}q_{\Gamma}  \} \cdot \llbracket p_{\Gamma,h}  \rrbracket   \\ & \;  -\int_{\mathcal{E}_{\Gamma,h}^\cap}  \{\boldsymbol{\nu}_{\Gamma}^{\tau}  \ell_{\Gamma} \nabla_{\tau}p_{\Gamma,h} \}_\cap \cdot \llbracket q_\Gamma  \rrbracket_\cap   -\int_{\mathcal{E}_{\Gamma,h}^\cap} \{\boldsymbol{\nu}_{\Gamma}^{\tau}  \ell_{\Gamma} \nabla_{\tau}q_{\Gamma}\}_\cap  \cdot \llbracket p_{\Gamma,h}  \rrbracket_\cap \\
& \;   +  \int_{\mathcal{E}_{\Gamma,h}^I \cup \mathcal{E}_{\Gamma,h}^D} \sigma^\Gamma_e \llbracket p_{\Gamma,h}  \rrbracket \cdot \llbracket q_\Gamma  \rrbracket + \int_{\mathcal{E}_{\Gamma,h}^\cap} \sigma_e^\cap \llbracket p_\Gamma  \rrbracket_\cap \cdot \llbracket q_\Gamma  \rrbracket_\cap   \label{bilinear form fracture} \\ \\
\mathcal{C}((p_h, p_{\Gamma,h}),(q,q_{\Gamma}))&= \int_{\Gamma_h} \beta_{\Gamma} \llbracket p_h \rrbracket \cdot  \llbracket q \rrbracket + \int_{\Gamma_h} \alpha_{\Gamma} (\{ p_h\}- p_{\Gamma,h})(\{q\}- q_{\Gamma}), \label{interface bilinear form}
\end{align}
and
\begin{align}
\mathcal{L}_b(q)&= \int_{\mathcal{T}_h} fq - \int_{\mathcal{F}_h^D} (\boldsymbol{\nu} \nabla q \cdot \textbf{n}_F - \sigma_F q)g_D, \label{linear operator bulk}\\
\mathcal{L}_{\Gamma}(q_{\Gamma})&= \int_{\Gamma_h} \ell_\Gamma f_{\Gamma} q_{\Gamma} -  \int_{\mathcal{E}_{\Gamma,h}^D} ( \boldsymbol{\nu}_{\Gamma}^{\tau}  \ell_{\Gamma} \nabla_{\tau}q_{\Gamma} \cdot \boldsymbol{\tau} - \sigma_e^\Gamma q_\Gamma )g_\Gamma. \label{linear operator fracture}
\end{align}

In the following, in order to simplify the notation, we will drop the subscript $\btau$ for the tangent operators on the fracture network.

\section{Well-posedness of the discrete formulation}\label{ntw:sec:well posed}

In this section, we prove that formulation \eqref{discrete formulation} is well-posed.

We recall that, for simplicity in the analysis, we are assuming the permeability tensors $\bnu$ and $\ell_\Gamma \bnu_\Gamma^\tau$ to be piecewise constant. We will employ the following notation $\bar{\boldsymbol{\nu}}_E = |\sqrt{\boldsymbol{\nu}|_E}|^2_2$ and $\bar{\boldsymbol{\nu}}_F^\tau = |\sqrt{ \ell_\Gamma \boldsymbol{\nu}_\Gamma^\tau|_F}|^2_2$, where $|\cdot|_2$ denotes the $l_2$-norm. \\

In order to work in a polytopic framework, we need to introduce some technical tools as in \cite{poligoni1,poligoni2,antonietti2015review,cangiani2016hp, libropoligoni}.
The first tool consists in trace inverse estimates, so that the norm of a polynomial on a polytope's face/edge can be bounded by the norm on the element itself. To this aim, we need to make some regularity assumptions on the mesh.
\begin{defi}\label{mesh xxx}
A mesh $\mathcal{T}_h$ is said to be \emph{polytopic-regular} if, for any $E \in \mathcal{T}_h$, there exists a set of non-overlapping (not necessarily shape-regular) $d$-dimensional simplices $\{S_E^i \}_{i=1}^{n_E}$ contained in $E$, such that $\bar{F} = \partial \bar{E} \cap \bar{S_E^i}$, for any face $F \subseteq \partial E$, and 
\begin{equation} \label{bound mesh xxx}
h_E \lesssim \frac{d |S_E^i|}{|F|}, \quad \quad i=1, \dots, n_E,
\end{equation}
with the hidden constant independent of the discretization parameters, the number of faces of the element $n_E$, and the face measure.
\end{defi}
We remark that this definition does not give any restriction on the number of faces per element, nor on their measure.
\begin{ass} \label{meshes are xxx}
We assume that $\mathcal{T}_h$ and $\Gamma_h$ are polytopic-regular meshes.
\end{ass}

The above assumption allows us to state the following inverse-trace estimate for polytopic elements \cite{cangiani2016hp, libropoligoni}:
\begin{lem}\label{inverse estimate poligoni}
Let $E$ be a polygon/polyhedron belonging to a mesh satisfying Definition \ref{mesh xxx} and let $v \in \mathbb{P}_{k_E}(E)$. Then, we have
\begin{equation} \label{eq:inverse estimate poligoni}
||v||^2_{L^2(\partial E)} \lesssim \frac{(k_E+1)(k_E+d)}{h_E} ||v||^2_{L^2(E)},
\end{equation}
where the hidden constant depends on the dimension $d$, but it is independent of the discretization parameters, of the number of faces of the element and of the relative size of the face compared to the diameter $k_E$ of $E$.
\end{lem}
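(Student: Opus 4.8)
The plan is to localise the estimate to a single face of $E$ and transfer the bound to the associated simplex supplied by Definition \ref{mesh xxx}, on which a sharp polynomial trace inequality is available. Writing $\|v\|^2_{L^2(\partial E)} = \sum_{F \subseteq \partial E} \|v\|^2_{L^2(F)}$, I would treat each face $F$ separately. By Definition \ref{mesh xxx} there is a (possibly very anisotropic) simplex $S_E^i \subseteq E$ for which $\bar F = \partial \bar E \cap \bar{S_E^i}$, so that $F$ is exactly one of the faces of $S_E^i$. The point of passing to $S_E^i$ is that on a simplex one has the classical trace inequality of Warburton and Hesthaven: for any $w \in \mathbb{P}_{k}(S)$ on a $d$-simplex $S$ with face $G$,
\[
\|w\|^2_{L^2(G)} \le \frac{(k+1)(k+d)}{d}\,\frac{|G|}{|S|}\,\|w\|^2_{L^2(S)}.
\]
I would stress that this inequality involves the simplex only through the ratio $|G|/|S|$ and is therefore invariant under affine maps; crucially, it does not deteriorate when the simplex degenerates, which is precisely what allows us to dispense with any shape-regularity of the $S_E^i$.

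Applying this inequality to $w = v|_{S_E^i} \in \mathbb{P}_{k_E}(S_E^i)$ on the face $F$ gives
\[
\|v\|^2_{L^2(F)} \le \frac{(k_E+1)(k_E+d)}{d}\,\frac{|F|}{|S_E^i|}\,\|v\|^2_{L^2(S_E^i)}.
\]
At this point I would invoke the geometric bound \eqref{bound mesh xxx}, which in the form $\frac{|F|}{|S_E^i|} \lesssim \frac{d}{h_E}$ absorbs the factor $d$ and converts the measure ratio into $1/h_E$, yielding
\[
\|v\|^2_{L^2(F)} \lesssim \frac{(k_E+1)(k_E+d)}{h_E}\,\|v\|^2_{L^2(S_E^i)}.
\]

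Finally I would sum over all faces $F \subseteq \partial E$. Since the simplices $\{S_E^i\}$ are non-overlapping and contained in $E$, the corresponding $L^2(S_E^i)$ norms add up to at most $\|v\|^2_{L^2(E)}$, so
\[
\|v\|^2_{L^2(\partial E)} = \sum_{F \subseteq \partial E} \|v\|^2_{L^2(F)} \lesssim \frac{(k_E+1)(k_E+d)}{h_E}\sum_i \|v\|^2_{L^2(S_E^i)} \le \frac{(k_E+1)(k_E+d)}{h_E}\,\|v\|^2_{L^2(E)},
\]
which is the claim. The main obstacle, and the whole reason the constant is independent of the number of faces and of their relative size, is contained in this last step together with the degeneracy-robustness of the simplex trace inequality: one must use a \emph{distinct}, non-overlapping simplex per face, so that no face is counted with an uncontrolled multiplicity, and one must scrupulously avoid any scaling argument that would reintroduce a shape-regularity constant for the thin simplices $S_E^i$.
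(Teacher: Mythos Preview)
The paper does not actually give its own proof of this lemma: it is stated as a known result with citations to \cite{cangiani2016hp, libropoligoni}, and no argument is supplied. Your proposal is correct and reproduces precisely the standard proof found in those references --- localise to each face, pass to the associated simplex from Definition~\ref{mesh xxx}, apply the Warburton--Hesthaven sharp trace inequality (whose constant depends only on $|F|/|S_E^i|$ and not on simplex shape), invoke \eqref{bound mesh xxx}, and sum using the non-overlapping property of the $S_E^i$.
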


The second fundamental tool to deal with polytopic discretizations, is an appropriate definition of the discontinuity penalization parameter. In particular, this will be instrumental for handling elements with arbitrarily small faces. Following \cite{poligoni1,poligoni2,antonietti2015review,cangiani2016hp, libropoligoni}, we define the bulk and fracture penalty functions as:
\begin{defi}\label{defi sigma bulk}
The discontinuity-penalization parameter $\sigma: \mathcal{F}_h \cup \F^D \rightarrow \R^+$ for the bulk problem is defined facewise as
 \begin{equation} \label{sigma bulk}
  \sigma(\textbf{x}) = \sigma_0  \begin{cases} \max_{E \in \{E^+,E^-\}} \frac{\bar{\boldsymbol{\nu}}_E (k_E + 1)(k_E+d)}{h_E} & \mbox{if } \textbf{x} \subset F \in \mathcal{F}_h^I, \, \bar{F}= \partial \bar{E}^+ \cap \partial \bar{E}^-, \\[3ex]
   \frac{\bar{\boldsymbol{\nu}}_E (k_E +1)(k_E+d)}{h_E} & \mbox{if } \textbf{x} \subset F \in \mathcal{F}_h^D, \, \bar{F}= \partial \bar{E} \cap \partial \bar{\Omega},
\end{cases}
 \end{equation}
 with $\sigma_0>0$ independent of $k_E$, $|E|$ and $|F|$.
\end{defi}

\begin{defi}\label{defi sigma fracture}
The discontinuity-penalization parameter $\sigma_\Gamma: \E^I \cup \E^D \cup \E^\cap \rightarrow \R^+$ for the fracture problem is defined edgewise as
 \begin{equation} \label{sigma fracture}
  \sigma^\Gamma(\textbf{x}) = \sigma_{0}^\Gamma  \begin{cases} \max\limits_{F \in \{F^+,F^-\}} \frac{\bar{\boldsymbol{\nu}}_F^\tau (k_F + 1)(k_F+d-1)}{h_F} & \mbox{if } \textbf{x} \subset e \in \E^I, \, \bar{e}= \partial \bar{F}^+ \cap \partial \bar{F}^-, \\[3ex]
   \frac{\bar{\boldsymbol{\nu}}_F^\tau (k_F +1)(k_F+d-1)}{h_F}, & \mbox{if } \textbf{x} \subset e \in \E^D, \, \bar{e}= \partial \bar{F} \cap \partial \bar{\Gamma}, \\[3ex]
   \max\limits_{F \in \{F^1, \dots, F^{N_\Gamma}\}} \frac{\bar{\boldsymbol{\nu}}_F^\tau (k_F + 1)(k_F+d-1)}{h_F} & \mbox{if } \textbf{x} \subset e \in \E^\cap, \, \bar{e}= \partial \bar{F}^1 \cap \dots \cap \partial \bar{F}^{N_\Gamma},
\end{cases}
 \end{equation}
 with $\sigma_0^\Gamma >0$ independent of $k_F$, $|F|$ and $|e|$.
\end{defi} 
Note that the definition of the fracture parameter on the intersection edges will play a crucial role in proving the well-posedness of our method. In the following we will write $\sigma^\cap$ to denote $\sigma^\Gamma|_{\E^\cap}$.

\vspace{1cm}

Next, we endow the discrete space $Q_h^b \times Q_h^\Gamma$ with the \emph{energy} norm
\begin{equation}\label{norma energia}
|||(q,q_\Gamma)|||^2 = ||q||^2_{b,DG}  +||q_\Gamma||^2_{\Gamma,DG} +||(q,q_\Gamma)||_{\mathcal{C}}^2,
\end{equation}
where
\begin{align*}
||q||^2_{b,DG} &=  ||\boldsymbol{\nu}^{1/2} \nabla q||_{0,\T}^2 +  ||\sigma_F^{1/2} \llbracket q \rrbracket ||_{0,\F^I \cup \F^D}^2,\\
||q_\Gamma||^2_{\Gamma,DG} &= ||(\boldsymbol{\nu_{\Gamma}^\tau} \ell_\Gamma)^{1/2} \nabla q_\Gamma||^2_{0,\Gamma_h} + || \sigma_e^{1/2} \llbracket q_\Gamma  \rrbracket ||^2_{0,\E^I \cup \E^D \cup \E^\cap},\\
||(q,q_\Gamma)||_{\mathcal{C}}^2&=  ||\beta_\Gamma^{1/2} \llbracket q  \rrbracket||_{0,\Gamma_h}^2 +||\alpha_\Gamma^{1/2}(\{q\} - q_\Gamma)||^2_{0,\Gamma_h}.
\end{align*}
\begin{oss}
Since we are assuming that there is a single intersection in the fracture network $\Gamma$, we have that $|| \cdot||_{b,DG}$ is a norm on the bulk space $Q_h^b$. In the case of a general fracture network, this holds true only if every connected component of $\Omega \setminus \bar{\Gamma}$ does have part of its boundary in $\partial \Omega_D$. Otherwise, $|| \cdot||_{b,DG}$ is only a seminorm. 
Similarly, we have that $|| \cdot ||_{\Gamma,DG}$ is a norm on the network space $Q_h^\Gamma$, provided that the network is non-immersed, that is $\partial \Gamma_D \neq \emptyset$. However, we remark that, thanks to the coupling term $|| \cdot ||_{\mathcal{C}}$, we have that $||| \cdot |||$ is a norm on $Q_h^b \times Q_h^\Gamma$ for every possible configuration of the fracture network, including the totally immersed case. Moreover, $|||\cdot|||$ is well defined also on the extended space $Q^b(h) \times Q^\Gamma(h)$ introduced in \eqref{spazio esteso bulk}-\eqref{spazio esteso fracture}.
\end{oss}

Following \cite{poligoni1,poligoni2,antonietti2015review,cangiani2016hp,mioUnafrattura}, the analysis will be based on the introduction of an appropriate \emph{inconsistent} formulation and, consequently, on Strang's second Lemma, \cite{strang}. To this end, we introduce the following \emph{extensions} of the forms $\mathcal{A}_b(\cdot, \cdot)$ and $\mathcal{A}_\Gamma(\cdot, \cdot)$ and $\mathcal{L}_b(\cdot)$ and $\mathcal{L}_\Gamma(\cdot)$ to the space $Q^b(h) \times Q^\Gamma(h)$:
\begin{align} \label{forme estese}
\tilde{\mathcal{A}}_b(p,q) & =  \int_{\T} \boldsymbol{\nu} \nabla p_h \cdot \nabla q 
 - \int_{\mathcal{F}_h^I \cup \mathcal{F}_h^D} \{ \boldsymbol{\nu} \boldsymbol{\Pi}_{\textbf{W}_h^b}(\nabla p_h) \} \cdot \llbracket q \rrbracket \\ &\;  -  \int_{\mathcal{F}_h^I \cup \mathcal{F}_h^D} \{ \bnu \boldsymbol{\Pi}_{\textbf{W}_h^b}( \nabla q) \} \cdot \llbracket p \rrbracket  +   \int_{\mathcal{F}_h^I \cup \mathcal{F}_h^D}  \sigma_F  \llbracket p \rrbracket \cdot  \llbracket q \rrbracket  \\
\tilde{\mathcal{A}}_\Gamma(p_{\Gamma,h},q_\Gamma) & =\int_{\Gamma_h} \boldsymbol{\nu}_{\Gamma}^{\tau}  \ell_{\Gamma} \nabla p_{\Gamma} \cdot \nabla q_{\Gamma} 
  -  \int_{\E^I \cup \E^D \cup \E^\cap} \{ \bnu_{\Gamma}^{\tau}  \ell_{\Gamma} \boldsymbol{\Pi}_{\textbf{W}_h^\Gamma}(\nabla p_{\Gamma})  \} \cdot \llbracket q_\Gamma  \rrbracket \\
  & \; -  \int_{\E^I \cup \E^D \cup \E^\cap} \{ \bnu_{\Gamma}^{\tau}  \ell_{\Gamma} \boldsymbol{\Pi}_{\textbf{W}_h^\Gamma}(\nabla q_{\Gamma} ) \} \cdot \llbracket p_{\Gamma}  \rrbracket     +  \int_{\E^I \cup \E^D \E^\cap} \sigma^\Gamma_e \llbracket p_{\Gamma}  \rrbracket \cdot \llbracket q_\Gamma  \rrbracket  \\
\tilde{\mathcal{L}}_b(q)&= \int_{\mathcal{T}_h} fq - \int_{\mathcal{F}_h^D} (\bnu \boldsymbol{\Pi}_{\textbf{W}_h^b}(\nabla q) \cdot \textbf{n}_F - \sigma_F q)g_D, \\
\tilde{\mathcal{L}}_{\Gamma}(q_{\Gamma})&= \int_{\Gamma_h} \ell_\Gamma f_{\Gamma} q_{\Gamma} -  \int_{\E^D} ( \boldsymbol{\nu}_{\Gamma}^{\tau}  \ell_{\Gamma} \boldsymbol{\Pi}_{\textbf{W}_h^\Gamma}( \nabla q_{\Gamma}) \cdot \boldsymbol{\tau}_e - \sigma_e^\Gamma q_\Gamma )g_\Gamma. 
\end{align}
They were obtained by replacing the trace of the gradient operators $\nabla$ and $\nabla_{\tau}$ with the trace of their $L^2$-projection onto the DG vector-valued spaces $\textbf{W}_h^b$ and $\textbf{W}_h^\Gamma$, respectively.  
It follows that these newly introduced forms are well-defined on $Q^b(h) \times Q^\Gamma(h)$ and that they coincide with the formers on the discrete space $Q_h^b \times Q_h^\Gamma$. This means, in particular, that we can consider for the analysis the following \emph{equivalent} version of the discrete problem \eqref{discrete formulation}:\\
Find $(p_h, p_{h,\Gamma}) \in Q_h^b \times Q_h^{\Gamma}$ such that
 \begin{equation} \label{inconsistent discrete formulation fully coupled}
 \tilde{\mathcal{A}}_h \left((p_h,p_h^{\Gamma}), (q, q_{\Gamma}) \right) = \tilde{\mathcal{L}}_h(q,q_{\Gamma}) \;\;\;\; \forall (q, q_{\Gamma}) \in  Q_h^b \times Q_h^{\Gamma},
 \end{equation}
 where $\tilde{\mathcal{A}}_h$ is obtained from $\mathcal{A}_h$ by replacing the bilinear forms with their extended versions \eqref{forme estese}.
 Note that formulation \eqref{inconsistent discrete formulation fully coupled} is no longer consistent due to the discrete nature of the $L^2$-projection operators.\\
 
Next, we prove that problem \eqref{inconsistent discrete formulation fully coupled} extended to the space  $Q^b(h) \times Q^\Gamma(h)$ is well-posed. This, on the one hand, will ensure the well-posedness of discrete problem \eqref{discrete formulation} and, on the other hand, will be of future use in the error analysis.
We remark that the results involving the bulk problem, contained in \cite{mioUnafrattura} in the case of one single non-immersed fracture, can be easily extended to the case of a network of fractures. For this reason, our focus will mainly be on the fracture problem. 

Following \cite{mioUnafrattura}, one can prove that the bulk bilinear form $\tilde{\mathcal{A}}_b$ is continuous and coercive:
\begin{lem} \label{continuita coercivita bulk}
Let $\sigma: \mathcal{F}_h^I \cup \F^D \rightarrow \R^+$ be defined as in \eqref{sigma bulk}. Then, if Assumption \ref{meshes are xxx} holds, the bilinear form $\tilde{\mathcal{A}}_b(\cdot, \cdot)$ is continuous on $Q^b(h) \times Q^b(h)$ and, provided that $\sigma_0$ is sufficiently large, it is also coercive on $Q^b(h) \times Q^b(h)$, i.e.,
 \begin{equation}
 \tilde{\mathcal{A}}_b (p,q) \lesssim ||q||_{b,DG} ||p||_{b,DG}, \quad  \quad \quad  \tilde{\mathcal{A}}_b(q, q) \gtrsim ||q||_{b,DG}^2, 
 \end{equation}
for any  $q,p\in Q^b(h)$.
\end{lem}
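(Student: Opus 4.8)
The plan is to follow the by-now classical analysis of the Symmetric Interior Penalty DG method, adapted to the polytopic setting exactly as in \cite{cangiani2016hp,libropoligoni,mioUnafrattura}. Both properties reduce to controlling the two \emph{symmetric} face contributions of $\tilde{\mathcal{A}}_b$, namely those involving $\{\bnu \boldsymbol{\Pi}_{\textbf{W}_h^b}(\nabla\cdot)\}$, since the volume term $\int_\T \bnu\nabla p\cdot\nabla q$ and the penalty term $\int_{\F^I\cup\F^D}\sigma_F\llbracket p\rrbracket\cdot\llbracket q\rrbracket$ are immediately bounded, respectively, by $\|\bnu^{1/2}\nabla p\|_{0,\T}\|\bnu^{1/2}\nabla q\|_{0,\T}$ and by $\|\sigma_F^{1/2}\llbracket p\rrbracket\|_{0,\F^I\cup\F^D}\|\sigma_F^{1/2}\llbracket q\rrbracket\|_{0,\F^I\cup\F^D}$ via Cauchy--Schwarz, and they coincide with the two halves of the energy seminorm contained in $\|\cdot\|_{b,DG}$. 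For continuity I would apply Cauchy--Schwarz to each symmetric term after inserting the weight $\sigma_F$, writing for instance
\[
\Big| \int_{\F^I\cup\F^D} \{\bnu \boldsymbol{\Pi}_{\textbf{W}_h^b}(\nabla p)\}\cdot\llbracket q\rrbracket \Big| \le \| \sigma_F^{-1/2}\{\bnu \boldsymbol{\Pi}_{\textbf{W}_h^b}(\nabla p)\} \|_{0,\F^I\cup\F^D}\, \|\sigma_F^{1/2}\llbracket q\rrbracket\|_{0,\F^I\cup\F^D},
\]
so that the second factor is already controlled by $\|q\|_{b,DG}$ and everything reduces to bounding the weighted flux-average on the faces by $\|\bnu^{1/2}\nabla p\|_{0,\T}$.

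The heart of the argument is this last bound, and it is here that the polytopic tools enter. The crucial point is that the form has been written with the $L^2$-projected gradient $\boldsymbol{\Pi}_{\textbf{W}_h^b}(\nabla p)$, whose restriction to each $E$ is a polynomial of degree $k_E$; hence Lemma~\ref{inverse estimate poligoni} is applicable even when $p$ belongs to the $H^2(\T)$-component of the extended space $Q^b(h)$. Writing the average on an interior face $F=\partial E^+\cap\partial E^-$ as the mean of the two one-sided traces and applying Lemma~\ref{inverse estimate poligoni} on each $E\in\{E^+,E^-\}$ (the matrix $\bnu|_E$ being constant, $\bnu|_E\,\boldsymbol{\Pi}_{\textbf{W}_h^b}(\nabla p)|_E$ is itself a polynomial), I obtain a factor $\bar{\boldsymbol{\nu}}_E (k_E+1)(k_E+d)/h_E$ times $\|\bnu|_E^{1/2}\boldsymbol{\Pi}_{\textbf{W}_h^b}(\nabla p)\|_{0,E}^2$. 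Since $\bnu|_E$ is constant the projection commutes with it, so $L^2$-stability of $\boldsymbol{\Pi}_{\textbf{W}_h^b}$ gives $\|\bnu|_E^{1/2}\boldsymbol{\Pi}_{\textbf{W}_h^b}(\nabla p)\|_{0,E}\le\|\bnu^{1/2}\nabla p\|_{0,E}$. The factor $\bar{\boldsymbol{\nu}}_E(k_E+1)(k_E+d)/h_E$ is, by construction \eqref{sigma bulk}, precisely what $\sigma_F$ dominates, so dividing by $\sigma_F$ cancels it up to the constant $\sigma_0^{-1}$ and yields $\|\sigma_F^{-1/2}\{\bnu\boldsymbol{\Pi}_{\textbf{W}_h^b}(\nabla p)\}\|_{0,\F^I\cup\F^D}^2\lesssim\sigma_0^{-1}\|\bnu^{1/2}\nabla p\|_{0,\T}^2$, which closes the continuity estimate.

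For coercivity I would set $p=q$, so that $\tilde{\mathcal{A}}_b(q,q)=\|\bnu^{1/2}\nabla q\|_{0,\T}^2+\|\sigma_F^{1/2}\llbracket q\rrbracket\|_{0,\F^I\cup\F^D}^2-2\int_{\F^I\cup\F^D}\{\bnu\boldsymbol{\Pi}_{\textbf{W}_h^b}(\nabla q)\}\cdot\llbracket q\rrbracket$, and then bound the cross term by Young's inequality combined with the same face estimate: for any $\varepsilon>0$ it is controlled by $\varepsilon\,\sigma_0^{-1}\|\bnu^{1/2}\nabla q\|_{0,\T}^2+\varepsilon^{-1}\|\sigma_F^{1/2}\llbracket q\rrbracket\|_{0,\F^I\cup\F^D}^2$, up to hidden constants. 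Choosing $\varepsilon$ and then $\sigma_0$ so that both perturbations are absorbed into the two positive terms leaves $\tilde{\mathcal{A}}_b(q,q)\gtrsim\|q\|_{b,DG}^2$; this is the precise meaning of ``$\sigma_0$ sufficiently large''. I expect the main obstacle to be bookkeeping rather than conceptual: one must make the powers $(k_E+1)(k_E+d)/h_E$ produced by the inverse estimate match \emph{exactly} the weights baked into $\sigma_F$ — in particular the scaling by the element diameter $h_E$ rather than by the (possibly arbitrarily small) face diameter — since this is exactly what keeps the estimate robust with respect to faces of vanishing measure, the defining difficulty of the polytopic setting. The second delicate point is verifying the commutation/stability of $\boldsymbol{\Pi}_{\textbf{W}_h^b}$ against the constant tensor $\bnu|_E$, which is what legitimises applying the inverse estimate on the whole extended space $Q^b(h)$ rather than only on $Q_h^b$.
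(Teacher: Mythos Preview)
Your argument is correct and is exactly the standard polytopic SIPDG analysis the paper relies on: the paper itself does not reprove this lemma but simply refers to Lemma~7.4 in \cite{mioUnafrattura}, whose proof proceeds precisely along the lines you describe (weighted Cauchy--Schwarz on the face terms, inverse estimate \eqref{eq:inverse estimate poligoni} applied to the projected gradient, $L^2$-stability of $\boldsymbol{\Pi}_{\textbf{W}_h^b}$, and Young's inequality for coercivity). Your identification of the two delicate points --- the $h_E$-scaling built into $\sigma_F$ and the role of the projection in making the inverse estimate available on all of $Q^b(h)$ --- is on target.
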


\begin{proof}
We refer to Lemma 7.4 in \cite{mioUnafrattura}.
\end{proof}  

Next, we prove an analogous result for the problem in fracture network.
\begin{lem} \label{continuita coercivita network}
Let $\sigma^\Gamma: \E^I \cup \E^D \cup \E^\cap \rightarrow \R^+$ be defined as in \eqref{sigma fracture}. Then, if Assumption \ref{meshes are xxx} holds, the bilinear form $\tilde{\mathcal{A}}_\Gamma(\cdot, \cdot)$ is continuous on $Q^\Gamma(h) \times Q^\Gamma(h)$ and, provided that $\sigma_0^\Gamma$ is sufficiently large, it is also coercive on $Q^\Gamma(h) \times Q^\Gamma(h)$, i.e.,
 \begin{equation}
 \tilde{\mathcal{A}}_\Gamma(p_\Gamma,q_\Gamma) \lesssim ||q_\Gamma||_{\Gamma,DG} ||p_\Gamma||_{\Gamma,DG}, \quad  \quad \quad  \tilde{\mathcal{A}}_\Gamma(q_\Gamma, q_\Gamma) \gtrsim ||q_\Gamma||_{\Gamma,DG}^2, 
 \end{equation}
for any  $q_\Gamma,p_\Gamma\in Q^\Gamma(h)$.
\end{lem}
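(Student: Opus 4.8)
The plan is to reproduce the classical Symmetric Interior Penalty continuity/coercivity argument, whose structure is already available for the bulk form in Lemma \ref{continuita coercivita bulk}, and to concentrate on the genuinely new contributions coming from the intersection edges $\E^\cap$, where the generalized operators $\{\cdot\}_\cap$ and $\llbracket\cdot\rrbracket_\cap$ of Definition \ref{salto media intersezione} replace the ordinary ones. Two ingredients would be used throughout: the polytopic inverse-trace estimate of Lemma \ref{inverse estimate poligoni} and the $L^2$-stability of the projection $\boldsymbol{\Pi}_{\textbf{W}_h^\Gamma}$. Together they let me trade every flux average evaluated on an edge for the corresponding volume gradient norm, the scaling factor $\bar{\boldsymbol{\nu}}_F^\tau(k_F+1)(k_F+d-1)/h_F$ being exactly what the penalty function $\sigma^\Gamma$ of Definition \ref{defi sigma fracture} encodes.

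For continuity I would apply the Cauchy--Schwarz inequality termwise. The volume term and the penalty term over $\E^I\cup\E^D\cup\E^\cap$ are controlled directly by $\|\cdot\|_{\Gamma,DG}$. For the two consistency flux terms restricted to ordinary edges, after Cauchy--Schwarz in the $\sigma^\Gamma$-weighted inner product I would bound the weighted flux average $\|(\sigma^\Gamma)^{-1/2}\{\bnu_\Gamma^\tau\ell_\Gamma\boldsymbol{\Pi}_{\textbf{W}_h^\Gamma}(\nabla p_\Gamma)\}\|_{0,\E^I\cup\E^D}$ by $\|(\bnu_\Gamma^\tau\ell_\Gamma)^{1/2}\nabla p_\Gamma\|_{0,\Gamma_h}$ through the inverse estimate and the definition of $\sigma^\Gamma$ on interior/Dirichlet edges, exactly as in the single-fracture case.

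The intersection contribution is the crux. On $e\in\E^\cap$ the average $\{\bnu_\Gamma^\tau\ell_\Gamma\boldsymbol{\Pi}_{\textbf{W}_h^\Gamma}(\nabla p_\Gamma)\}_\cap$ is the vector whose $(i,k)$-component is $\tfrac{1}{N_\Gamma}(\bnu_{\gamma_i}^\tau\ell_i\boldsymbol{\Pi}(\nabla p_\Gamma^i)\cdot\btau_i - \bnu_{\gamma_k}^\tau\ell_k\boldsymbol{\Pi}(\nabla p_\Gamma^k)\cdot\btau_k)$. I would bound each such difference by $2(|\bnu_{\gamma_i}^\tau\ell_i\boldsymbol{\Pi}(\nabla p_\Gamma^i)\cdot\btau_i|^2+|\bnu_{\gamma_k}^\tau\ell_k\boldsymbol{\Pi}(\nabla p_\Gamma^k)\cdot\btau_k|^2)$ and sum over the $\binom{N_\Gamma}{2}$ pairs; since each index occurs in exactly $N_\Gamma-1$ pairs, the sum collapses to $\tfrac{2(N_\Gamma-1)}{N_\Gamma^2}\sum_{k=1}^{N_\Gamma}|\bnu_{\gamma_k}^\tau\ell_k\boldsymbol{\Pi}(\nabla p_\Gamma^k)\cdot\btau_k|^2$. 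Crucially, because $\sigma^\cap$ is defined as the maximum over all $N_\Gamma$ fractures meeting at $e$, the factor $(\sigma^\cap)^{-1}$ dominates $(\sigma_0^\Gamma\bar{\boldsymbol{\nu}}_{F^k}^\tau(k_{F^k}+1)(k_{F^k}+d-1)/h_{F^k})^{-1}$ for every single $k$; applying the inverse-trace estimate fracture by fracture then yields $\|(\sigma^\cap)^{-1/2}\{\cdots\}_\cap\|_{0,\E^\cap}^2\lesssim(\sigma_0^\Gamma)^{-1}\|(\bnu_\Gamma^\tau\ell_\Gamma)^{1/2}\nabla p_\Gamma\|_{0,\Gamma_h}^2$, with a hidden constant depending only on $N_\Gamma$, which is finite thanks to the geometric assumption that the number of fractures joining at the intersection is bounded.

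For coercivity, testing with $q_\Gamma=p_\Gamma$, I would observe that the volume term and the two penalty terms reproduce $\|p_\Gamma\|_{\Gamma,DG}^2$ exactly, while the two consistency flux terms coincide by symmetry and contribute a single off-diagonal term. Writing $A=\|(\sigma^\Gamma)^{-1/2}\{\cdots\}\|_{0,\E^I\cup\E^D\cup\E^\cap}$ and $B=\|(\sigma^\Gamma)^{1/2}\llbracket p_\Gamma\rrbracket\|_{0,\E^I\cup\E^D\cup\E^\cap}$, Young's inequality $2AB\le\theta^{-1}A^2+\theta B^2$ with a fixed $\theta\in(0,1)$ keeps a positive fraction $(1-\theta)$ of the penalty term, while the flux-average square $A^2$ is reabsorbed into the volume gradient norm via the continuity bound (both on ordinary and on intersection edges), which carries the factor $(\sigma_0^\Gamma)^{-1}$. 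Taking $\sigma_0^\Gamma$ sufficiently large then leaves strictly positive coefficients on both the volume and the penalty contributions, giving coercivity. The main obstacle is precisely the intersection bookkeeping of the third paragraph: ensuring that the max-based definition of $\sigma^\cap$, together with the bounded valence $N_\Gamma$, delivers the same inverse-estimate control one enjoys on ordinary edges; once this is secured, the rest is the standard interior-penalty computation.
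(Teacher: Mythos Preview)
Your proposal is correct and follows essentially the same route as the paper's proof: both expand the intersection average $\{\cdot\}_\cap$ componentwise, bound each difference by the sum of squares, use the counting observation that every fracture index appears in exactly $N_\Gamma-1$ pairs, and then exploit the max-based definition of $\sigma^\cap$ to apply the inverse-trace estimate of Lemma~\ref{inverse estimate poligoni} together with the $L^2$-stability of $\boldsymbol{\Pi}_{\textbf{W}_h^\Gamma}$ on each fracture separately. The paper treats coercivity first and states continuity is analogous, while you do the reverse, but the substance is identical.
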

\begin{proof}
We start with coercivity. For any $q_\Gamma \in Q^\Gamma(h)$, we have
\begin{align} \label{coerc fratt tot}
\tilde{\mathcal{A}}_\Gamma(q_\Gamma, q_\Gamma)&= ||q_\Gamma||_{DG}^2 - 2 \int_{\mathcal{E}_{\Gamma,h}^I \cup \mathcal{E}_{\Gamma,h}^D} \hspace{-3mm} \{ \boldsymbol{\nu}_{\Gamma}^{\tau}  \ell_{\Gamma} \boldsymbol{\Pi}_{\textbf{W}_h^\Gamma}(\nabla q_{\Gamma})  \} \cdot \llbracket q_\Gamma  \rrbracket \\
&\qquad -2 \int_{\mathcal{E}_{\Gamma,h}^\cap} \hspace{-3mm} \{\boldsymbol{\nu}_{\Gamma}^{\tau}  \ell_{\Gamma} \boldsymbol{\Pi}_{\textbf{W}_h^\Gamma}(\nabla q_{\Gamma}) \}_\cap\cdot  \llbracket q_\Gamma  \rrbracket_\cap \\
&= I + II + III
\end{align}

In order to bound term II, we proceed as in \cite{mioUnafrattura}, Lemma 7.4. We employ Cauchy-Schwarz's, triangular and Young's inequalities to write:
\begin{multline}\int_{\E^I \cup \E^D} \hspace{-3mm} \{ \boldsymbol{\nu}_{\Gamma}^{\tau}  \ell_{\Gamma} \boldsymbol{\Pi}_{\textbf{W}_h^\Gamma}(\nabla q_{\Gamma})  \} \cdot \llbracket q_\Gamma  \rrbracket \\
\lesssim 
  \sum_{\E^I \cup \E^D}  \Big[ \varepsilon \int_e (\sigma^\Gamma_e)^{-1} \{\bnu_{\Gamma}^{\tau}  \ell_{\Gamma} \boldsymbol{\Pi}_{\textbf{W}_h^\Gamma}(\nabla q_{\Gamma})\}^2 + \frac{1}{4 \varepsilon} 
   \int_e \sigma^\Gamma_e \llbracket q_\Gamma  \rrbracket^2 \Big]. 
   \end{multline}

From inverse inequality \eqref{inverse estimate poligoni}, the definition of the penalty parameter $\sigma^\Gamma$ \eqref{sigma fracture}, Assumption \ref{meshes are xxx} and the $L^2$-stability of the projector $ \boldsymbol{\Pi}_{\textbf{W}_h^\Gamma}$, we obtain
\begin{multline} \label{coerc fratt dopo young altri}
\int_{\mathcal{E}_{\Gamma,h}^I \cup \mathcal{E}_{\Gamma,h}^D} \{ \boldsymbol{\nu}_{\Gamma}^{\tau}  \ell_{\Gamma}  \boldsymbol{\Pi}_{\textbf{W}_h^\Gamma}(\nabla q_{\Gamma} ) \} \cdot \llbracket q_\Gamma  \rrbracket\\
 \lesssim \frac{\varepsilon}{ \sigma_{0, \Gamma}}  || {(\nug)}^{1/2}  \nabla q_\Gamma||^2_{0,\Gamma_h} + \frac{1}{4 \varepsilon }  ||\sigma_e^{1/2}  \llbracket q_\Gamma \rrbracket ||^2_{0, \E^I \cup \E^D}.
\end{multline}

We now consider the intersection term III. Multiplying and dividing by $\sigma^\cap$ and applying Cauchy-Schwarz's and Young's inequalities we have
\begin{multline} \label{coerc fratt dopo young inters}
\int_{\mathcal{E}_{\Gamma,h}^\cap} \hspace{-3mm} \{\boldsymbol{\nu}_{\Gamma}^{\tau}  \ell_{\Gamma} \boldsymbol{\Pi}_{\textbf{W}_h^\Gamma}(\nabla q_{\Gamma}) \}_\cap\cdot  \llbracket q_\Gamma  \rrbracket_\cap\\
\lesssim  \sum_{e \in \mathcal{E}_{\Gamma,h}^\cap} \Big[ \varepsilon \int_e (\sigma^\cap_e
)^{-1} \{\boldsymbol{\nu}_{\Gamma}^{\tau}  \ell_{\Gamma}  \boldsymbol{\Pi}_{\textbf{W}_h^\Gamma}( \nabla q_{\Gamma})
\}^2_\cap + \frac{1}{4 \varepsilon} \int_e \sigma^\cap_e \llbracket q_\Gamma  \rrbracket^2_\cap \Big].
\end{multline}
Using the definition of $\{ \cdot \}_\cap$ \eqref{salto media intersezione} and triangular inequality, we obtain
\begin{align*}
\int_e &\sigma_\cap^{-1} \{\boldsymbol{\nu}_{\Gamma}^{\tau}  \ell_{\Gamma}  \boldsymbol{\Pi}_{\textbf{W}_h^\Gamma}(\nabla q_{\Gamma}) \}^2_\cap \\
& = \frac{1}{N_\Gamma} \sum_{\substack{i,k=1 \\ i<k}}^{N_\Gamma} \int_e (\sigma_e^\cap)^{-1} (\boldsymbol{\nu}_{\gamma_i}^{\tau}  \ell_{i}  \boldsymbol{\Pi}_{\textbf{W}_h^{\gamma_i}}(\nabla q_{\Gamma}^i) \cdot \boldsymbol{\tau}_i - \boldsymbol{\nu}_{\gamma_k}^{\tau}  \ell_{k}  \boldsymbol{\Pi}_{\textbf{W}_h^{\gamma_k}}(\nabla q_{\Gamma}^k) \cdot \btau_k)^2 \\
& \leq \frac{2}{N_\Gamma} \sum_{\substack{i,k=1 \\ i<k}}^{N_\Gamma} \Big[ \int_e(\sigma_e^\cap)^{-1} (\boldsymbol{\nu}_{\gamma_i}^{\tau}  \ell_{i}  \boldsymbol{\Pi}_{\textbf{W}_h^{\gamma_i}}(\nabla q_{\Gamma}^i) \cdot )^2  + \int_e (\sigma_e^\cap)^{-1} (\boldsymbol{\nu}_{\gamma_k}^{\tau}  \ell_{k}  \boldsymbol{\Pi}_{\textbf{W}_h^{\gamma_k}}(\nabla q_{\Gamma}^k))^2 \Big] \\
&= \frac{2(N_\Gamma -1)}{N_\Gamma} \sum_{k=1}^{N_\Gamma}  \int_e (\sigma_e^\cap)^{-1} (\boldsymbol{\nu}_{\gamma_k}^{\tau}  \ell_{k}  \boldsymbol{\Pi}_{\textbf{W}_h^{\gamma_k}}(\nabla q_{\Gamma}^k))^2,
\end{align*} 
where the last equality follows from the fact that every term appears in the sum exactly $(N_\Gamma-1)$ times.
Since we are assuming that $\ell_{\Gamma} \bnu_{\tau}^\Gamma$ is constant on each $F \in \Gamma_h$, this implies that
\begin{align*}
(&a):=\sum_{e \in \E^\cap}  \int_e \sigma_\cap^{-1} \{\boldsymbol{\nu}_{\Gamma}^{\tau}  \ell_{\Gamma}  \boldsymbol{\Pi}_{\textbf{W}_h^\Gamma}(\nabla q_{\Gamma}) \}^2_\cap \\
& \leq \varepsilon \frac{2(N_\Gamma-1)}{N_\Gamma} \sum_{k=1}^{N_\Gamma} \sum_{\substack{F \in \gamma_{k,h} \\ \partial F \cap \mathcal{I}_\cap \neq \emptyset}} \int_{\partial F} \sigma_\cap^{-1}  (\boldsymbol{\nu}_{\gamma_k}^{\tau}  \ell_{k}  \boldsymbol{\Pi}_{\textbf{W}_h^{\gamma_k}}(\nabla q_{\Gamma}^k))^2 \\
& \leq 
\varepsilon \frac{2(N_\Gamma-1)}{N_\Gamma} \sum_{k=1}^{N_\Gamma} \sum_{\substack{F \in \gamma_{k,h} \\ \partial F \cap \mathcal{I}_\cap \neq \emptyset}} \frac{1}{\sigma_{0, \Gamma}} \Big( \frac{\bar{\boldsymbol{\nu}}_F^\tau (k_F + 1)(k_F+d-1)}{h_F} \Big)^{-1} \bar{\boldsymbol{\nu}}_F^\tau  ||(\boldsymbol{\nu}_{\Gamma}^{\tau}  \ell_{\Gamma})^{1/2} \nabla q_{\Gamma}^k ||_{0, \partial F}^2,
\end{align*}
where we have employed the definition of $\sigma^\Gamma$ and the fact that for all $e \subseteq \partial F$ 
\[\sigma^\Gamma_e \geq \sigma_{0, \Gamma}  \frac{\bar{\boldsymbol{\nu}}_F^\tau (k_F + 1)(k_F+d-1)}{h_F}.\]
Note that this is also true if $e \subseteq \mathcal{I}_\cap$.
Finally, employing inverse inequality \eqref{inverse estimate poligoni} and the stability of the projection operator $ \boldsymbol{\Pi}_{\textbf{W}_h^\Gamma}$ we have
\begin{equation}\label{ultima coerc}
(a) \lesssim  \frac{\varepsilon}{\sigma_{0, \Gamma}} ||(\boldsymbol{\nu}_{\Gamma}^{\tau}  \ell_{\Gamma})^{1/2} \nabla q_{\Gamma} ||_{0, \Gamma_h}^2.
\end{equation}
From \eqref{coerc fratt tot}, employing the derived bounds \eqref{coerc fratt dopo young altri}, \eqref{coerc fratt dopo young inters} and \eqref{ultima coerc}, we obtain that the bilinear $\tilde{\mathcal{A}}_\Gamma(\cdot, \cdot)$ form is coercive, provided that the parameter $\sigma_{0, \Gamma}$ is chosen big enough. Continuity can be proved with analogous arguments.
\end{proof}

Employing Lemma \ref{continuita coercivita bulk} and Lemma \ref{continuita coercivita network}, we can now prove the well-posedness of the discrete problem \eqref{discrete formulation}.

\begin{prop} \label{well posed discrete}
Let the penalization parameters $\sigma$ for the problem in the bulk and in the fracture network be defined as in \eqref{sigma bulk} and \eqref{sigma fracture}, respectively. Then, problem \eqref{discrete formulation} is well-posed, provided that $\sigma_0$ and $\sigma_{0, \Gamma}$ are chosen big enough.
\end{prop}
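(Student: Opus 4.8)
The plan is to apply the Lax--Milgram theorem to the equivalent inconsistent formulation \eqref{inconsistent discrete formulation fully coupled} on the finite-dimensional space $Q_h^b \times Q_h^\Gamma$ endowed with the energy norm $|||\cdot|||$ of \eqref{norma energia}. By the Remark following Definition \ref{defi sigma fracture}, $|||\cdot|||$ is a genuine norm on $Q_h^b \times Q_h^\Gamma$ for \emph{every} admissible configuration of the network, including the totally immersed one, precisely because of the coupling contribution; moreover the extended forms $\tilde{\mathcal{A}}_h$, $\tilde{\mathcal{L}}_h$ coincide with $\mathcal{A}_h$, $\mathcal{L}_h$ on the discrete spaces. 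Hence it suffices to prove that $\tilde{\mathcal{A}}_h$ is continuous and coercive with respect to $|||\cdot|||$ and that $\tilde{\mathcal{L}}_h$ is a bounded linear functional.

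For coercivity I would use the decomposition $\tilde{\mathcal{A}}_h((q,q_\Gamma),(q,q_\Gamma)) = \tilde{\mathcal{A}}_b(q,q) + \tilde{\mathcal{A}}_\Gamma(q_\Gamma,q_\Gamma) + \mathcal{C}((q,q_\Gamma),(q,q_\Gamma))$, observing that the bulk form depends only on $q$ and the fracture form only on $q_\Gamma$, so that the two subproblems are coupled \emph{solely} through $\mathcal{C}$. The first two terms are bounded below by $||q||_{b,DG}^2$ and $||q_\Gamma||_{\Gamma,DG}^2$ via Lemma \ref{continuita coercivita bulk} and Lemma \ref{continuita coercivita network}, upon choosing $\sigma_0$ and $\sigma_{0,\Gamma}$ large enough. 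The coupling form, evaluated on the diagonal, is a sum of squared weighted $L^2$-norms with the strictly positive weights $\beta_\Gamma$ and $\alpha_\Gamma$ (here $\alpha_\Gamma>0$ because $\xi>1/2$), so that $\mathcal{C}((q,q_\Gamma),(q,q_\Gamma)) = ||(q,q_\Gamma)||_{\mathcal{C}}^2 \ge 0$ and in fact equals exactly the third contribution to $|||\cdot|||^2$. Summing the three estimates yields $\tilde{\mathcal{A}}_h((q,q_\Gamma),(q,q_\Gamma)) \gtrsim |||(q,q_\Gamma)|||^2$.

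Continuity follows in the same additive manner: I would add the continuity bounds of Lemma \ref{continuita coercivita bulk} and Lemma \ref{continuita coercivita network} to a Cauchy--Schwarz estimate for the bilinear coupling term, $\mathcal{C}((p,p_\Gamma),(q,q_\Gamma)) \le ||(p,p_\Gamma)||_{\mathcal{C}}\, ||(q,q_\Gamma)||_{\mathcal{C}}$, and then bound every factor by the full energy norm. Boundedness of $\tilde{\mathcal{L}}_h$, which in the homogeneous case reduces to $\int_{\T} fq + \int_{\Gamma_h}\ell_\Gamma f_\Gamma q_\Gamma$, follows from the Cauchy--Schwarz inequality together with the regularity $f\in L^2(\Omega)$, $f_\Gamma \in \prod_{k=1}^{N_\Gamma} L^2(\gamma_k)$ and a (discrete) Poincar\'e--Friedrichs inequality controlling the $L^2$-norms by $|||\cdot|||$; in any event it is automatic on a finite-dimensional space. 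Lax--Milgram then delivers existence and uniqueness for \eqref{inconsistent discrete formulation fully coupled}, hence for \eqref{discrete formulation}.

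I do not expect a substantial obstacle at this stage, because the only genuinely delicate point---coercivity of the fracture form across the intersection, where the non-aligned tangent normals force the use of the penalty $\sigma^\cap$ of Definition \ref{defi sigma fracture}---has already been settled in Lemma \ref{continuita coercivita network}. The one thing to be careful about is that the decomposition of $\tilde{\mathcal{A}}_h$ confines the entire bulk--fracture coupling to the sign-definite form $\mathcal{C}$, so that no cross term between bulk and fracture degrees of freedom can destroy coercivity; since $\mathcal{C}$ is symmetric and positive semidefinite this is immediate, and the additive structure of $|||\cdot|||$ makes the final assembly transparent.
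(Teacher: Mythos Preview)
Your proposal is correct and follows essentially the same route as the paper: decompose $\tilde{\mathcal{A}}_h$ into $\tilde{\mathcal{A}}_b+\tilde{\mathcal{A}}_\Gamma+\mathcal{C}$, invoke Lemmas~\ref{continuita coercivita bulk} and~\ref{continuita coercivita network} for the first two pieces, observe that $\mathcal{C}$ is exactly $||\cdot||_{\mathcal{C}}^2$ on the diagonal and is bounded by Cauchy--Schwarz off it, and handle $\tilde{\mathcal{L}}_h$ by Cauchy--Schwarz and the regularity of the data. The only cosmetic difference is that the paper states coercivity and continuity on the extended space $Q^b(h)\times Q^\Gamma(h)$ rather than on $Q_h^b\times Q_h^\Gamma$, and appeals directly to the regularity of $f,f_\Gamma,g_D,g_\Gamma$ for the continuity of $\tilde{\mathcal{L}}_h$ without invoking a discrete Poincar\'e--Friedrichs inequality.
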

\begin{proof}
In order to use Lax-Milgram Theorem, we prove that the bilinear form $\tilde{\mathcal{A}}_h(\cdot, \cdot)$ is continuous and coercive on $Q^b(h) \times Q^\Gamma(h)$. We have, from Cauchy-Schwarz's inequality
\begin{align*}
\mathcal{C}((q,q_{\Gamma}), (q, q_{\Gamma}))  &= ||(q, q_{\Gamma})||_{\mathcal{C}}^2 \\
\mathcal{C}((q, q_{\Gamma}),(p, p_{\Gamma})) &\leq  \sum_{F \in \Gamma_h}||\beta_{\Gamma}^{1/2} \llbracket q \rrbracket||^2_{L^2(F)} ||\beta_{\Gamma}^{1/2}\llbracket p \rrbracket||^2_{L^2(F)}  \\\quad& \quad+  \sum_{F \in \Gamma_h}|| \alpha_{\Gamma}^{1/2}(\{q\}-q_{\Gamma})||^2_{L^2(F)}|| \alpha_{\Gamma}^{1/2}(\{p\}-p_{\Gamma})||^2_{L^2(F)} \\ \quad& \leq  |||(q, q_{\Gamma})||| \cdot |||(p, p_{\Gamma})|||,
\end{align*}
so that coercivity and continuity are a direct consequence of the definition of the norm $||| \cdot|||$ and of Lemmas \ref{continuita coercivita bulk} and \ref{continuita coercivita network}. The continuity of the linear operator $ \tilde{\mathcal{L}}_h(\cdot)$ can be easily proved by using the Cauchy-Schwarz inequality, thanks to the regularity assumptions on the forcing terms $f$ and $f_{\Gamma}$ and on the boundary data $g_D$ and $g_\Gamma$.
\end{proof}

\section{Error analysis}\label{ntw:sec: error}
In this section, we derive a-priori error estimates for the discrete problem \eqref{discrete formulation}. To this aim, in the following, we summarize the results contained in \cite{poligoni1,poligoni2,antonietti2015review,cangiani2016hp, libropoligoni}, where standard $hp$-approximation bounds on simplices are extended to arbitrary polytopic elements. These results are indeed the basic tool for the error analysis of DG-methods.
\subsection{$hp$-approximation bounds}
All the theory is based on the existence of a suitable covering of the polytopic mesh, made of a set of overlapping simplices \cite{libropoligoni}:
\begin{defi} \label{simplex covering}
A \emph{covering} $\mathcal{T}_{\#} = \{T_E\}$ related to the polytopic mesh $\mathcal{T}_h$ is a set of shape-regular $d$-dimensional simplices $T_E$, such that for each $E \in \mathcal{T}_h$, there exists a $T_E \in \mathcal{T}_{\#}$ such that $E \subsetneq T_E$.
\end{defi}

\begin{ass} \label{A2 bulk + fracture}
\cite{poligoni1,poligoni2,antonietti2015review,cangiani2016hp, libropoligoni}
There exists a covering $\mathcal{T}_{\#}$ of $\mathcal{T}_h$ and a positive constant $O_{\Omega}$, independent of the mesh parameters, such that
\[\max_{E \in \mathcal{T}_h} card\{ E' \in \mathcal{T}_h: \,E' \cap T_E \neq \emptyset, \, T_E \in \mathcal{T}_{\#} \;\,\mbox{s.t.} \,\; E \subset T_E \} \leq O_{\Omega},\]
and $ h_{T_E} \lesssim h_E$ for each pair $E \in \T$ and $T_E \in \mathcal{T}_{\#}$, with $E \subset T_E$.\\

Moreover, there exists a covering $\mathcal{F}_{\#}$ of $\Gamma_h$ and a positive constant $O_{\Gamma}$, independent of the mesh parameters, such that
\[\max_{F \in \Gamma_h}  card\{ F' \in \Gamma_h: \,F' \cap T_F \neq \emptyset, \, T_F \in \mathcal{F}_{\#} \;\,\mbox{s.t.} \,\; F \subset T_F \} \leq O_{\Gamma},\]
and $h_{T_F} \lesssim h_F$ for each pair $F \in \Gamma_h$ and $T_F \in \mathcal{F}_{\#}$, with $F \subset T_F$.
\end{ass}

From this assumption and standard results for simplices, we can state the following approximation result:
 \begin{lem}\label{interpolazione poligoni} \emph{\cite{poligoni1,poligoni2,antonietti2015review,cangiani2016hp, libropoligoni}}
 Let $E\in \mathcal{T}_h$, $F\subset \partial E$ denote one of its faces, and $T_E\in \mathcal{T}_{\#}$ denote the corresponding simplex such that $E \subset T_E$ (see Definition~\ref{simplex covering}). Suppose that $v \in L^2(\Omega)$ is such that $\mathscr{E} v|_{T_E} \in H^{r_E}(T_E)$, for some $r_E\geq0$. Then, if Assumption \ref{meshes are xxx} and \ref{A2 bulk + fracture} are satisfied, there exists $\widetilde{\Pi}v$, such that $\widetilde{\Pi}v|_E \in \mathbb{P}_{k_E}(E)$, and the following bound holds
 \begin{equation} \label{stima interp Hq}
 ||v- \widetilde{\Pi}v||_{H^q(E)} \lesssim \frac{h_E^{s_E-q}}{k_E^{r_E-q}}||\mathscr{E}v||_{H^{r_E}(T_E)}, \quad \quad \quad 0 \leq q \leq r_E.
 \end{equation}
 Moreover, if $r_E>1/2$,
 \begin{equation} \label{stima interp su bordo}
 ||v- \widetilde{\Pi}v||_{L^2(\partial E)} \lesssim \frac{h_E^{s_E-1/2}}{k_E^{r_E-1/2}}||\mathscr{E}v||_{H^{r_E}(T_E)}.
 \end{equation}
Here, $s_E = \min(k_E+1,r_E)$ and the hidden constants depend on the shape-regularity of $T_E$, but are independent of $v$, $h_E$, $k_E$ and the number of faces per element, while $\mathscr{E}$ is the continuous extension operator as defined in \cite{stein}.
 \end{lem}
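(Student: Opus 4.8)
The plan is to reduce the estimate on the polytope $E$ to classical $hp$-approximation theory on the shape-regular covering simplex $T_E \supset E$ provided by Definition~\ref{simplex covering} and Assumption~\ref{A2 bulk + fracture}. First I would use the continuous Stein extension operator $\mathscr{E}$ of \cite{stein} to regard $\mathscr{E}v$ as a function in $H^{r_E}(T_E)$ defined on the whole simplex, with $\mathscr{E}v|_E = v$. On $T_E$, being a single shape-regular simplex, the standard simplicial $hp$-approximation result (see \cite{cangiani2016hp, libropoligoni} and the classical references therein) supplies a polynomial $\Pi_{k_E}(\mathscr{E}v) \in \mathbb{P}_{k_E}(T_E)$ satisfying
\begin{equation*}
\| \mathscr{E}v - \Pi_{k_E}(\mathscr{E}v) \|_{H^q(T_E)} \lesssim \frac{h_{T_E}^{s_E-q}}{k_E^{r_E-q}} \| \mathscr{E}v \|_{H^{r_E}(T_E)}, \qquad 0 \le q \le r_E,
\end{equation*}
with $s_E = \min(k_E+1, r_E)$. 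I would then simply \emph{define} $\widetilde{\Pi}v := \big(\Pi_{k_E}(\mathscr{E}v)\big)|_E$, which is a polynomial of degree at most $k_E$ on $E$ because $E \subset T_E$.

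The interior bound \eqref{stima interp Hq} follows at once: since $v = \mathscr{E}v$ on $E$ and $E \subset T_E$, monotonicity of the $H^q$-norm under restriction gives $\| v - \widetilde{\Pi}v \|_{H^q(E)} \le \| \mathscr{E}v - \Pi_{k_E}(\mathscr{E}v) \|_{H^q(T_E)}$, and the displayed simplicial estimate applies. It then remains only to replace $h_{T_E}$ by $h_E$: because $s_E - q \ge 0$, the comparability $h_{T_E} \lesssim h_E$ from Assumption~\ref{A2 bulk + fracture} yields $h_{T_E}^{s_E - q} \lesssim h_E^{s_E - q}$, which is exactly the claimed bound. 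The same argument verbatim, using the fracture covering $\mathcal{F}_{\#}$ and the constant $O_\Gamma$ of Assumption~\ref{A2 bulk + fracture}, gives the result for $E \in \Gamma_h$.

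The trace estimate \eqref{stima interp su bordo} is where the real work lies, and I expect it to be the main obstacle. The difficulty is that a face $F \subseteq \partial E$ is, in general, an \emph{interior} $(d-1)$-dimensional cross-section of the covering simplex $T_E$, not a subset of $\partial T_E$, so the classical trace-approximation estimate on $\partial T_E$ does not apply directly. The plan is to bound $\| v - \widetilde{\Pi}v \|_{L^2(\partial E)}$ by transferring a trace inequality, valid for $H^{r_E}(T_E)$ with $r_E > 1/2$, from the reference simplex onto the section $F$ via the shape-regular affine map, and then to couple it with the interior bounds already obtained. Concretely, for $w := \mathscr{E}v - \Pi_{k_E}(\mathscr{E}v)$ one controls $\|w\|_{L^2(F)}$ by a scaled (multiplicative, when $r_E \ge 1$; fractional, when $1/2 < r_E < 1$) trace inequality of the form $\|w\|_{L^2(F)}^2 \lesssim h_E^{-1}\|w\|_{L^2(E)}^2 + h_E^{2r_E-1}\,|w|_{H^{r_E}(E)}^2$, and then inserts the interior estimate \eqref{stima interp Hq} at the levels $q=0$ and $q$ close to $r_E$. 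Balancing the two contributions reproduces the exponent $s_E - 1/2$ in $h_E$ and $r_E - 1/2$ in $k_E$; the care needed to keep the hidden constants independent of the number of faces of $E$ (so that the polytopic nature of the mesh is genuinely exploited) is the delicate part, which is precisely what the covering construction of Assumption~\ref{A2 bulk + fracture} and the shape-regularity of $T_E$ are designed to guarantee.
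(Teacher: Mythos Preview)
Your sketch is essentially the standard argument and matches the approach in the references the paper cites; note that the paper itself does not prove the lemma but simply refers to \cite{poligoni1} for \eqref{stima interp Hq} and to \cite{cangiani2016hp} for \eqref{stima interp su bordo}. Your construction of $\widetilde{\Pi}v$ as the restriction to $E$ of the simplicial $hp$-approximant on $T_E$, together with $h_{T_E}\lesssim h_E$, is exactly how \eqref{stima interp Hq} is obtained there.

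One small technical correction on the trace bound: the mechanism that keeps the constant in \eqref{stima interp su bordo} independent of the number and size of the faces of $E$ is Assumption~\ref{meshes are xxx} (polytopic regularity), not the covering Assumption~\ref{A2 bulk + fracture}. The point is that each face $F\subset\partial E$ is a genuine face of a $d$-simplex $S_E^F\subset E$ with $h_E\lesssim d|S_E^F|/|F|$; one applies the standard scaled trace inequality on $S_E^F$ (or pulls back to the reference simplex), obtaining $\|w\|_{L^2(F)}^2 \lesssim |F|\,|S_E^F|^{-1}\|w\|_{L^2(S_E^F)}^2 + |F|\,|S_E^F|^{-1} h_E^{2}\,|w|_{H^{1}(S_E^F)}^2$ (and the analogous fractional version for $1/2<r_E<1$), then enlarges $S_E^F\subset T_E$ and uses \eqref{bound mesh xxx} to replace $|F|\,|S_E^F|^{-1}$ by $h_E^{-1}$. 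Summing over faces still gives a constant independent of their number because each $S_E^F$ sits inside $T_E$ and the interior estimate on $T_E$ is invoked only once. Apart from this misattribution, your outline is correct.
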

\begin{proof}
See \cite{poligoni1} for a detailed proof of \eqref{stima interp Hq} and \cite{cangiani2016hp} for the proof of \eqref{stima interp su bordo}.
\end{proof}
Clearly, analogous approximation results can be stated for the fracture faces, if Assumptions \ref{meshes are xxx} and \ref{A2 bulk + fracture} are both satisfied.  

\subsection{Error estimates}
For each subdomain $\omega_j$, $j=1, \dots, N_\omega$, we denote by $\mathscr{E}_j$ the classical continuous extension operator (cf. \cite{stein}, see also \cite{mioUnafrattura}) $\mathscr{E}_j: H^s(\Omega_j) \rightarrow H^s(\R^d)$, for $s \in \N_0$. Similarly, we denote by $\mathscr{E}_{\gamma_k}$ the continuous extension operator  $\mathscr{E}_{\gamma_k}: H^s(\gamma_k) \rightarrow H^s(\R^{d-1})$, for $s \in \N_0$.
We then make the following regularity assumptions for the exact solution $(p,p_\Gamma)$ of problem \eqref{pb continuo weak}:
\begin{ass}\label{ass regolarita estensioni}
Let $\mathcal{T}_{\#}=\{T_E\}$ and $\mathcal{F}_{\#}=\{T_F\}$ denote the associated coverings of $\Omega$ and $\Gamma$, respectively, of Definition \ref{simplex covering}. We assume that the exact solution $(p,p_{\Gamma})$ is such that:
\begin{itemize}
\item[A1. ]  for every $E \in \mathcal{T}_h$, if $E \subset \omega_j$ and $p_j$ denotes the restriction of $p$ to $\omega_j$, it holds $\mathscr{E}_j p_j|_{T_E} \in H^{r_E}(T_E)$, with  $r_E \geq 1+d/2$ and $T_E \in \mathcal{T}_{\#}$ with $E \subset T_E$;
\item[A2. ] for every $F \in \Gamma_h$, if $F \subset \gamma_k$, it holds $\mathscr{E}_{\gamma_k} p_\Gamma^k|_{T_F} \in H^{r_F}(T_F)$, with  $r_F \geq 1+(d-1)/2$ and $T_F \in \mathcal{F}_{\#}$ with $F \subset T_F$.
\end{itemize} 
\end{ass}

From Proposition \ref{well posed discrete} and Strang's second Lemma directly follows this abstract error bound.
\begin{lem} \label{lem strang}
Assuming that the hypotheses of Proposition \ref{well posed discrete} are satisfied, it holds
\begin{multline}
|||(p, p_{\Gamma})-(p_h, p_{\Gamma,h})||| \lesssim \inf_{(q,q_{\Gamma}) \in Q_h^b \times Q_h^{\Gamma}} |||(p, p_{\Gamma})-(q, q_{\Gamma})||| \quad \\
+ \sup_{(w,w_{\Gamma}) \in Q_h^b \times Q_h^{\Gamma}} \frac{|\mathcal{R}_h((p, p_{\Gamma}),(w, w_{\Gamma}))|}{|||(w, w_{\Gamma})|||},
\end{multline}
where the residual $\mathcal{R}_h$ is defined as
\[\mathcal{R}_h((p, p_{\Gamma}),(w, w_{\Gamma}))=\tilde{\mathcal{A}}_h((p,p_{\Gamma}),(w,w_{\Gamma}))- \tilde{\mathcal{L}}_h(w, w_{\Gamma}).\]
\end{lem}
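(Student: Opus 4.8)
The plan is to invoke Strang's second Lemma, exploiting the continuity and coercivity of $\tilde{\mathcal{A}}_h$ on the extended space $Q^b(h) \times Q^\Gamma(h)$ established in Proposition~\ref{well posed discrete}. Crucially, because formulation \eqref{inconsistent discrete formulation fully coupled} is \emph{inconsistent} (the exact solution does not satisfy the discrete equations, owing to the $L^2$-projections hidden in $\tilde{\mathcal{A}}_h$), Galerkin orthogonality is unavailable and the residual $\mathcal{R}_h$ must appear to quantify this defect. Both the exact solution $(p,p_\Gamma)$ — by Assumption~\ref{salti grad nulli} — and all discrete functions belong to $Q^b(h) \times Q^\Gamma(h)$, so coercivity and continuity may legitimately be applied throughout.

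First I would fix an arbitrary $(q,q_\Gamma) \in Q_h^b \times Q_h^\Gamma$ and split by the triangle inequality, $|||(p,p_\Gamma)-(p_h,p_{\Gamma,h})||| \le |||(p,p_\Gamma)-(q,q_\Gamma)||| + |||(q,q_\Gamma)-(p_h,p_{\Gamma,h})|||$. The first summand is already in the required approximation form. For the second, I set $(w_h,w_{\Gamma,h}) = (q,q_\Gamma)-(p_h,p_{\Gamma,h})$, which lies in the discrete space, and use coercivity to write $|||(w_h,w_{\Gamma,h})|||^2 \lesssim \tilde{\mathcal{A}}_h((w_h,w_{\Gamma,h}),(w_h,w_{\Gamma,h}))$. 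Expanding the first argument by linearity and using that $(p_h,p_{\Gamma,h})$ solves \eqref{inconsistent discrete formulation fully coupled}, i.e.\ $\tilde{\mathcal{A}}_h((p_h,p_{\Gamma,h}),(w_h,w_{\Gamma,h})) = \tilde{\mathcal{L}}_h(w_h,w_{\Gamma,h})$, the right-hand side becomes $\tilde{\mathcal{A}}_h((q,q_\Gamma),(w_h,w_{\Gamma,h})) - \tilde{\mathcal{L}}_h(w_h,w_{\Gamma,h})$.

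Next I would add and subtract the exact solution in the first slot, rewriting the above as $\tilde{\mathcal{A}}_h((q,q_\Gamma)-(p,p_\Gamma),(w_h,w_{\Gamma,h})) + \bigl(\tilde{\mathcal{A}}_h((p,p_\Gamma),(w_h,w_{\Gamma,h})) - \tilde{\mathcal{L}}_h(w_h,w_{\Gamma,h})\bigr)$, the bracketed term being exactly $\mathcal{R}_h((p,p_\Gamma),(w_h,w_{\Gamma,h}))$ by definition. Applying continuity to the first term and bounding the second by the supremum defining $\mathcal{R}_h$, both of which carry a factor $|||(w_h,w_{\Gamma,h})|||$, I divide through by $|||(w_h,w_{\Gamma,h})|||$ to control $|||(q,q_\Gamma)-(p_h,p_{\Gamma,h})|||$ by $|||(p,p_\Gamma)-(q,q_\Gamma)|||$ plus the residual supremum. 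Reinserting this into the triangle inequality and taking the infimum over $(q,q_\Gamma) \in Q_h^b \times Q_h^\Gamma$ yields the claim. The argument is essentially routine; the only point requiring care — and the very reason the estimate is not a plain C\'ea bound — is the bookkeeping around the inconsistency, namely that one replaces $\tilde{\mathcal{A}}_h((p,p_\Gamma),\cdot)$ by $\tilde{\mathcal{L}}_h(\cdot)$ \emph{through} the residual rather than via exact Galerkin orthogonality, and that continuity and coercivity are genuinely needed on the enlarged space $Q^b(h) \times Q^\Gamma(h)$, not merely on $Q_h^b \times Q_h^\Gamma$.
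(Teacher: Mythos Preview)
Your argument is correct and is precisely the standard proof of Strang's second Lemma; the paper itself does not spell out these steps but simply states that the bound ``directly follows'' from Proposition~\ref{well posed discrete} and Strang's second Lemma. Your write-up is therefore a faithful expansion of what the paper invokes in one line.
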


It is easy to show that the residual is the sum of two contributions, one involving only the bulk problem and one involving only the network problem: 
\begin{equation}\label{split residuo}
\mathcal{R}_h((p, p_{\Gamma}),(w, w_{\Gamma}))= \mathcal{R}_b(p, w) + \mathcal{R}_\Gamma(p_\Gamma,w_\Gamma)
\end{equation} 
It follows that, to derive a bound for the global residual, we can bound each of the two contributions separately. Again, we will focus mainly on the term related to the fracture network.

\begin{lem} \label{stima residuo}
Let $(p,p_\Gamma)$ be the exact solution of problem \eqref{pb continuo weak} satisfying the regularity Assumptions \ref{salti grad nulli} and \ref{ass regolarita estensioni}. Then, for every $w \in Q^b(h)$ and $w_\Gamma \in Q^\Gamma(h)$, it holds

\begin{align}
&|\mathcal{R}_b(p,w)|^2 \lesssim  \sum_{E \in \mathcal{T}_h} \frac{h_E^{2(s_E-1)}}{k_E^{2(r_E-1)}} || \mathscr{E}p||^2_{H^{r_E}(T_E)} \Big[ \bar{\boldsymbol{\nu}}_E^2 \max_{F \subset \partial E \setminus (\Gamma \cup \partial \Omega_D)} \sigma_F^{-1}   (  \frac{k_E}{ h_E}  + \frac{k_E^2}{ h_E} )\Big]  \, \cdot \, ||w||_{b, DG}^2, \label{stima residuo bulk}\\
&|\mathcal{R}_\Gamma(p_\Gamma,w_\Gamma)|^2 \lesssim \Bigg( \sum_{F \in \Gamma_h} \frac{h_F^{2(s_F -1)}}{k_F^{2(r_F -1)}} || \mathscr{E}_\Gamma p_\Gamma ||^2_{H^{r_F}(T_F)} \Big[ (\bar{\boldsymbol{\nu}}^{\tau}_{F})^2  \max_{e \subseteq \partial F \setminus (\mathcal{I}_\cap \cup \partial \Gamma_N \cup \partial \Gamma_F)} \sigma_e^{-1} (\frac{k_F}{h_F}+ \frac{k_F^2}{h_F} )\Big] \\
 & +\sum_{k=1}^{N_\Gamma}  \sum_{\substack{F \in \gamma_{h,k}\\ \partial F \cap \mathcal{I}_\cap \neq \emptyset}}  \frac{h_F^{2(s_F-1)}}{k_F^{2(r_F-1)}}  ||\mathscr{E}_{\gamma_k} p_\Gamma^k||^2_{H^{r_F}(T_F)} \Big[ (\bar{\boldsymbol{\nu}}_F^\tau) ^2 \max_{e \subset \partial F \cap \mathcal{I}_\cap} (\sigma_e^\cap)^{-1} (\frac{k_F}{h_F}+ \frac{k_F^2}{h_F} ) \Big] \Bigg) \, \cdot \, ||w_\Gamma||_{\Gamma, DG}^2, \label{stima residuo fracture}
\end{align}
where, in \eqref{stima residuo bulk}, the extension operator $\mathscr{E}$ is to be interpreted as $\mathscr{E}_j$ if $E \subset \Omega_j$. Similarly, in \eqref{stima residuo fracture}, $\mathscr{E}_\Gamma$ is to be interpreted as $\mathscr{E}_{\gamma_k}$ if $F \subset \gamma_k$. 
\end{lem}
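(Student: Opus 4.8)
The plan is to exploit the fact that, although the extended forms $\tilde{\mathcal{A}}_h$ and $\tilde{\mathcal{L}}_h$ entering the residual of Lemma~\ref{lem strang} are inconsistent, the \emph{non-projected} forms $\mathcal{A}_h$, $\mathcal{L}_h$ \emph{are} consistent. Indeed, since $(p,p_\Gamma)$ solves \eqref{pb continuo weak} and satisfies the regularity Assumption~\ref{salti grad nulli} together with the interface and intersection conditions \eqref{CC}--\eqref{eq intersezione}, the very derivation of \eqref{formulazione dopo flussi bulk} and \eqref{formulaz frattura} shows that $\mathcal{A}_h((p,p_\Gamma),(w,w_\Gamma))=\mathcal{L}_h(w,w_\Gamma)$ for every discrete test pair. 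Subtracting this identity from $\mathcal{R}_h=\tilde{\mathcal{A}}_h-\tilde{\mathcal{L}}_h$, every contribution in which $\nabla$ and $\nabla_\tau$ were \emph{not} replaced by their $L^2$-projections cancels, and so does the coupling form $\mathcal{C}$, which carries no projection. Hence $\mathcal{R}_h$ reduces to the terms carrying the projection error $\boldsymbol{\Pi}(\nabla\cdot)-\nabla(\cdot)$ and splits as in \eqref{split residuo}. Since the bulk estimate \eqref{stima residuo bulk} is obtained exactly as in \cite{mioUnafrattura}, I would only detail $\mathcal{R}_\Gamma$.

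First I would write $\mathcal{R}_\Gamma$ explicitly. Using that the exact $p_\Gamma$ satisfies $\llbracket p_\Gamma\rrbracket=0$ on $\E^I$, vanishes on $\partial\Gamma_D$, and obeys the intersection conditions $\llbracket p_\Gamma\rrbracket_\cap=0$, $\llbracket \bnu_\Gamma^\tau\ell_\Gamma\nabla_\tau p_\Gamma\rrbracket_\cap=0$ of \eqref{eq intersezione}, together with $g_\Gamma=0$, all symmetric and penalty contributions tested against $\llbracket p_\Gamma\rrbracket$ or $\llbracket p_\Gamma\rrbracket_\cap$ drop out, leaving only the two flux-average terms
\begin{multline*}
\mathcal{R}_\Gamma(p_\Gamma,w_\Gamma)= -\int_{\E^I\cup\E^D}\{\bnu_\Gamma^\tau\ell_\Gamma(\boldsymbol{\Pi}_{\textbf{W}_h^\Gamma}(\nabla p_\Gamma)-\nabla p_\Gamma)\}\cdot\llbracket w_\Gamma\rrbracket \\
-\int_{\E^\cap}\{\bnu_\Gamma^\tau\ell_\Gamma(\boldsymbol{\Pi}_{\textbf{W}_h^\Gamma}(\nabla p_\Gamma)-\nabla p_\Gamma)\}_\cap\cdot\llbracket w_\Gamma\rrbracket_\cap .
\end{multline*}
The first integral mimics the bulk one. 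I would apply a Cauchy--Schwarz inequality weighted by $(\sigma_e^\Gamma)^{\mp 1/2}$, bounding the $\llbracket w_\Gamma\rrbracket$ factor by $\|w_\Gamma\|_{\Gamma,DG}$, and reduce the remaining factor, face by face, to $\|\boldsymbol{\Pi}_{\textbf{W}_h^\Gamma}(\nabla p_\Gamma)-\nabla p_\Gamma\|_{0,\partial F}$, with the weights $(\bar{\bnu}_F^\tau)^2$ and $\max_{e}(\sigma_e^\Gamma)^{-1}$ kept explicit. This projection error I would control by inserting the interpolant $\widetilde{\Pi}(\nabla p_\Gamma)$ of Lemma~\ref{interpolazione poligoni}: its interpolation part is estimated directly through \eqref{stima interp su bordo}, producing the $k_F/h_F$ factor, while the difference of the two polynomials is treated by the polytopic inverse-trace estimate of Lemma~\ref{inverse estimate poligoni} combined with the $L^2$-stability and optimality of $\boldsymbol{\Pi}_{\textbf{W}_h^\Gamma}$ and the interior bound \eqref{stima interp Hq}, producing the $k_F^2/h_F$ factor. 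This yields the first sum in \eqref{stima residuo fracture}.

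The intersection integral is the crux and the main obstacle, because $\{\cdot\}_\cap$ mixes the \emph{non-aligned} tangents $\btau_k$ of all fractures meeting at $\mathcal{I}_\cap$. Here I would reuse the decomposition developed for the coercivity term $(a)$ in the proof of Lemma~\ref{continuita coercivita network}: after a Cauchy--Schwarz inequality weighted by $(\sigma_e^\cap)^{\mp 1/2}$ (bounding $\|(\sigma_e^\cap)^{1/2}\llbracket w_\Gamma\rrbracket_\cap\|_{0,\E^\cap}\le\|w_\Gamma\|_{\Gamma,DG}$), I would expand $\{\bnu_\Gamma^\tau\ell_\Gamma(\boldsymbol{\Pi}_{\textbf{W}_h^\Gamma}(\nabla p_\Gamma)-\nabla p_\Gamma)\}_\cap$ into its $\binom{N_\Gamma}{2}$ pairwise differences and apply the triangle inequality, so that each term reduces to a single-fracture projection error $\bnu_{\gamma_k}^\tau\ell_k(\boldsymbol{\Pi}_{\textbf{W}_h^{\gamma_k}}(\nabla p_\Gamma^k)-\nabla p_\Gamma^k)\cdot\btau_k$, each appearing exactly $(N_\Gamma-1)$ times. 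Each of these is then estimated over the faces $F\in\gamma_{k,h}$ with $\partial F\cap\mathcal{I}_\cap\neq\emptyset$ precisely as in the previous paragraph, i.e. by inverse-trace (Lemma~\ref{inverse estimate poligoni}) plus the $hp$-approximation bounds (Lemma~\ref{interpolazione poligoni}), which again generate the $(\tfrac{k_F}{h_F}+\tfrac{k_F^2}{h_F})$ factor, while $(\bar{\bnu}_F^\tau)^2$ and $\max_{e\subset\partial F\cap\mathcal{I}_\cap}(\sigma_e^\cap)^{-1}$ are retained from the weighting. Since the geometric assumptions bound the number $N_\Gamma$ of fractures at the intersection, the factors $1/N_\Gamma$ and $(N_\Gamma-1)$ are harmless constants. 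Recombining gives the second sum in \eqref{stima residuo fracture}; adding this to the standard-edge contribution and to the referenced bulk estimate \eqref{stima residuo bulk} then completes the proof.
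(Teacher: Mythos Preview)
Your proposal is correct and follows essentially the same approach as the paper: reduce the residual to the projection-error terms on $\E^I\cup\E^D$ and on $\E^\cap$, apply a $\sigma$-weighted Cauchy--Schwarz inequality, insert the interpolant $\widetilde{\Pi}(\nabla p_\Gamma)$, and estimate the two resulting pieces via Lemma~\ref{interpolazione poligoni} and Lemma~\ref{inverse estimate poligoni}. The only cosmetic difference is that the paper obtains the residual expression by elementwise integration by parts rather than by your consistency-subtraction argument, and at the intersection it invokes the definition of $\{\cdot\}_\cap$ directly (passing straight to the single-fracture sum) instead of explicitly spelling out the $\binom{N_\Gamma}{2}$-term expansion and the $(N_\Gamma-1)$ multiplicity as you do; the underlying manipulation is the same.
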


\begin{proof}
Integrating by parts elementwise and using the fact that $(p,p_\Gamma)$ satisfies \eqref{pb continuo weak} and the regularity Assumption \ref{salti grad nulli}, we obtain the following expression for the residuals
\begin{align*}
\mathcal{R}_b(p,w)&=\sum_{F \in \F^I \cup \F^D} \int_F \{ \boldsymbol{\nu}(\nabla p -  \boldsymbol{\Pi}_{\textbf{W}_h^b} (\nabla p))\} \cdot \llbracket w \rrbracket, \\
 \mathcal{R}_\Gamma(p_\Gamma,w_\Gamma)&=\sum_{e \in \E^I \cup \E^D \cup \E^\cap} \int_e \{ \boldsymbol{\nu}_{\Gamma}^\tau \ell_\Gamma (\nabla p_\Gamma -  \boldsymbol{\Pi}_{\textbf{W}_h^\Gamma} (\nabla p_\Gamma))\} \cdot \llbracket w_\Gamma \rrbracket. 
\end{align*}
For the proof of \eqref{stima residuo bulk}, we refer to \cite{mioUnafrattura, mioUnified}. Here, we only focus on the proof of \eqref{stima residuo fracture}. To this aim, we consider the following two terms separately:
\begin{align} \label{residuo 2 termini}
(a)&:=\int_{ \E^I \cup \E^D}\{ \ell_\Gamma \bnu_\Gamma^\tau (\nabla p_\Gamma -  \boldsymbol{\Pi}_{\textbf{W}_h^\Gamma} (\nabla p_\Gamma))\} \cdot \llbracket q_\Gamma \rrbracket \\
(b) &:= \int_{ \E^\cap}\{ \ell_\Gamma \bnu_\Gamma^\tau (\nabla p_\Gamma -  \boldsymbol{\Pi}_{\textbf{W}_h^\Gamma} (\nabla p_\Gamma))\}_\cap \cdot \llbracket q_\Gamma \rrbracket_\cap. 
\end{align}
Employing the Cauchy-Schwarz inequality and the definition of norm $||\cdot||_{\Gamma,DG}$, we obtain
\[ |(a)|^2 \lesssim \left(\int_{\E^I \cup \E^D} \sigma_{\Gamma}^{-1} | \{ \ell_\Gamma \bnu_\Gamma^\tau(\nabla p_\Gamma - \boldsymbol{\Pi}_{\textbf{W}_h^\Gamma} (\nabla p_\Gamma))\}|^2 \right) \, \cdot \, ||q_\Gamma||_{\Gamma,DG}^2 . \]

Let $\widetilde{\Pi}$ denote also the vector-valued generalization of the interpolation operator $\widetilde{\Pi}$ defined in \ref{interpolazione poligoni}. Then, using triangular inequality we can write
\begin{multline*}
\sum_{e \in \E^I \cup \E^D} \sigma_{e}^{-1} \int_e | \{ \ell_\Gamma \bnu_\Gamma^\tau(\nabla p_\Gamma - \boldsymbol{\Pi}_{\textbf{W}_h^\Gamma} (\nabla p_\Gamma))\}|^2\\
 \lesssim \sum_{e \in \E^I \cup \E^D} \sigma_e^{-1} \int_e | \{ \ell_\Gamma \bnu_\Gamma^\tau(\nabla p_\Gamma -\widetilde{\Pi}(\nabla p_\Gamma))\}|^2 \\
 + \sum_{e \in \E^I \cup \E^D} \sigma_e^{-1} \int_e | \{ \ell_\Gamma \bnu_\Gamma^\tau \boldsymbol{\Pi}_{\textbf{W}_h^\Gamma} (\nabla p_\Gamma -\widetilde{\Pi}(\nabla p_\Gamma))\}|^2 \equiv (1a)+(2a).
\end{multline*}
Term (1a) can be bounded, employing the approximation results of Lemma \ref{interpolazione poligoni}, as
\[(1a) \lesssim \sum_{F \in \Gamma_h} \frac{h_F^{2(s_F-1)}}{k_F^{2(r_F-1)}} \big( (\bar{\boldsymbol{\nu}}_F^\tau)^2 \max_{e \subset \partial F \setminus (\mathcal{I}_\cap \cup \partial \Gamma_N \cup \partial \Gamma_F) } \sigma_e^{-1} \frac{h_F^{-1}}{k_F^{-1}} \big) ||\mathscr{E}_\Gamma p_\Gamma||^2_{H^{r_F}(T_F)}. \] 
Exploiting, in order: the boundedness of the permeability tensor $\ell_\Gamma \bnu_\Gamma^\tau$, inverse inequality \eqref{eq:inverse estimate poligoni}, the $L^2$-stability of the projector $ \boldsymbol{\Pi}_{\textbf{W}_h^\Gamma} $  and the approximation results of Lemma~\ref{interpolazione poligoni}, we can bound term (1b) as:
\begin{align*}
(2a)& \lesssim \sum_{F \in \Gamma_h} \max_{e \subset \partial F \setminus (\mathcal{I}_\cap \cup \partial \Gamma_N \cup \partial \Gamma_F)} \sigma_e^{-1}(\bar{\boldsymbol{\nu}}_F^\tau )^2 ||  \boldsymbol{\Pi}_{\textbf{W}_h^\Gamma} (\widetilde{\Pi}(\nabla p_\Gamma)-\nabla p_\Gamma)||^2_{L^2(\partial F)}\\
& \lesssim \sum_{F \in \Gamma_h} \max_{e \subset \partial F \setminus (\mathcal{I}_\cap \cup \partial \Gamma_N \cup \partial \Gamma_F)} \sigma_e^{-1} (\bar{\boldsymbol{\nu}}_F^\tau)^2 \frac{k_F^2}{h_F} || \widetilde{\Pi}(\nabla p_\Gamma)-\nabla p_\Gamma||^2_{L^2(F)} \\
& \lesssim \sum_{F \in \Gamma_h} \frac{h_F^{2(s_F-1)}}{k_F^{2(r_F-1)}}  ||\mathscr{E}_\Gamma p_\Gamma||^2_{H^{r_F}(T_F)} \Big(  (\bar{\boldsymbol{\nu}}_F^\tau)^2 \frac{k_F^2}{h_F} \max_{e \subset \partial F \setminus (\mathcal{I}_\cap \cup \partial \Gamma_N \cup \partial \Gamma_F)} \sigma_e^{-1} \Big).
\end{align*}

Next, we consider term (b). Employing the Cauchy-Schwarz inequality and the definition of the average operator at the intersection $\{ \cdot \}_\cap$, we obtain
\begin{equation*}
|(b)|^2 \lesssim \left(\sum_{k=1}^{N_\Gamma} \sum_{ e \in \mathcal{E}_{\gamma_k, h}^\cap}\int_e (\sigma_{e}^\cap)^{-1} | \ell_\Gamma \bnu_\Gamma^\tau(\nabla p_\Gamma^k - \boldsymbol{\Pi}_{\textbf{W}_h^{\gamma_k}} (\nabla p_\Gamma))|^2 \right) \, \cdot \, ||q_\Gamma||_{\Gamma,DG}^2 .
\end{equation*}
Recalling that $\widetilde{\Pi}$ denotes the vector-valued generalization of the interpolation operator of Lemma \ref{interpolazione poligoni},  we can write
\begin{multline*}
\sum_{k=1}^{N_\Gamma} \sum_{ e \in \mathcal{E}_{\gamma_k, h}^\cap}\int_e (\sigma_{e}^\cap)^{-1} | \ell_k \bnu_{\gamma_k}^\tau(\nabla p_\Gamma^k - \boldsymbol{\Pi}_{\textbf{W}_h^{\gamma_k}} (\nabla p_\Gamma))|^2 \\
\lesssim \sum_{k=1}^{N_\Gamma} \sum_{ e \in \mathcal{E}_{\gamma_k, h}^\cap} \Big(\int_e (\sigma_{e}^\cap)^{-1} | \ell_k \bnu_{\gamma_k}^\tau(\nabla p_\Gamma^k -\widetilde{\Pi}(\nabla p_\Gamma^k))|^2 \\
+ \int_e (\sigma_{e}^\cap)^{-1} | \ell_k \bnu_{\gamma_k}^\tau \boldsymbol{\Pi}_{\textbf{W}_h^\Gamma} (\nabla p_\Gamma^k -\widetilde{\Pi}(\nabla p_\Gamma^k))|^2\Big) \equiv (1b)+(2b).
\end{multline*}
Employing arguments analogous to those for bounding terms (1a) and (2a), we can then write 
\[(1b) \lesssim \sum_{k=1}^{N_\Gamma} \sum_{\substack{F \in \gamma_{h,k}\\ \partial F \cap \mathcal{I}_\cap \neq \emptyset}} \frac{h_F^{2(s_F-1)}}{k_F^{2(r_F-1)}} \big( (\bar{\boldsymbol{\nu}}_F^\tau) ^2 \max_{e \subset \partial F \cap \mathcal{I}_\cap } (\sigma_e^\cap)^{-1} \frac{h_F^{-1}}{k_F^{-1}} \big) ||\mathscr{E}_{\gamma_k} p_\Gamma^k||^2_{H^{r_F}(T_F)}, \] 
and 
\begin{align*}
(2b)& \lesssim \sum_{k=1}^{N_\Gamma} \sum_{\substack{F \in \gamma_{h,k}\\ \partial F \cap \mathcal{I}_\cap \neq \emptyset}} \max_{e \subset \partial F \cap \mathcal{I}_\cap} (\sigma_e^\cap)^{-1}(\bar{\boldsymbol{\nu}}_F^\tau )^2 ||  \boldsymbol{\Pi}_{\textbf{W}_h^\Gamma} (\widetilde{\Pi}(\nabla p_\Gamma^k)-\nabla p_\Gamma^k)||^2_{L^2(\partial F )}\\
& \lesssim\sum_{k=1}^{N_\Gamma} \sum_{\substack{F \in \gamma_{h,k}\\ \partial F \cap \mathcal{I}_\cap \neq \emptyset}}  \max_{e \subset \partial F \cap \mathcal{I}_\cap} (\sigma_e^\cap)^{-1} (\bar{\boldsymbol{\nu}}_F^\tau)^2 \frac{k_F^2}{h_F} || \widetilde{\Pi}(\nabla p_\Gamma^k)-\nabla p_\Gamma^k||^2_{L^2(F)} \\
& \lesssim \sum_{k=1}^{N_\Gamma} \sum_{\substack{F \in \gamma_{h,k}\\ \partial F \cap \mathcal{I}_\cap \neq \emptyset}}  \frac{h_F^{2(s_F-1)}}{k_F^{2(r_F-1)}}  ||\mathscr{E}_{\gamma_k} p_\Gamma^k||^2_{H^{r_F}(T_F)} \Big(  (\bar{\boldsymbol{\nu}}_F^\tau)^2 \frac{k_F^2}{h_F} \max_{e \subset \partial F\cap \mathcal{I}_\cap} (\sigma_e^\cap)^{-1} \Big).
\end{align*}
This concludes the proof.
\end{proof}

\begin{teo}\label{stima errore}
Let $\mathcal{T}_{\#}=\{T_E\}$ and $\mathcal{F}_{\#}=\{T_F\}$ denote the associated coverings of $\Omega$ and $\Gamma$, respectively, consisting of shape-regular simplexes as in Definition \ref{simplex covering}, satisfying Assumptions \ref{A2 bulk + fracture}. 
Let $(p,p_{\Gamma})$ be the solution of problem \eqref{pb continuo weak} and $(p_h, p_{\Gamma,h}) \in Q_h^b \times Q_h^{\Gamma}$ be its approximation obtained with the method \eqref{discrete formulation}, with the penalization parameters given by \eqref{sigma bulk} and \eqref{sigma fracture} and $\sigma_0$ and $\sigma_{0,\Gamma}$ sufficiently large. Moreover, suppose that the exact solution $(p,p_{\Gamma})$ satisfies the regularity Assumptions \ref{salti grad nulli} and \ref{ass regolarita estensioni}. 
 Then, the following error bound holds:
\begin{align*}
|||(p,p_{\Gamma})-(p_h, p_{\Gamma,h})|||^2 & \lesssim  \sum_{E \in \mathcal{T}_h} \frac{h_E^{2(s_E-1)}}{k_E^{2(r_E-1)}} G_E(h_E,k_E,\bar{\boldsymbol{\nu}}_E) ||\mathscr{E}p||^2_{H^{r_E}(T_E)} \\
& \quad + \sum_{F \in \Gamma_h} \frac{h_F^{2(s_F -1)}}{k_F^{2(r_F-1)}} G_F(h_F,k_F, \bar{\boldsymbol{\nu}}^\tau_F)|| \mathscr{E}_\Gamma p_{\Gamma}||^2_{H^{r_F}(T_F)}\\
& \quad + \sum_{k=1}^{N_\Gamma} \sum_{\substack{F \in \gamma_{h,k}\\ \partial F \cap \mathcal{I}_\cap \neq \emptyset}} \frac{h_F^{2(s_F -1)}}{k_F^{2(r_F-1)}} G_F^\cap(h_F,k_F, \bar{\boldsymbol{\nu}}^\tau_F)|| \mathscr{E}_{\gamma_k} p_{\Gamma}^k||^2_{H^{r_F}(T_F)},
\end{align*} 
where the $\mathscr{E}p$ is to be interpreted as $\mathscr{E}_j p_j$ when $E \subset \Omega_j$, $j=1, \dots, N_\omega$, and $\mathscr{E}_\Gamma p_\Gamma$ is to be interpreted as $\mathscr{E}_{\gamma_k} p_\Gamma^k$ when $F \subset\gamma_k$, $k=1, \dots, N_\Gamma$.
Here, $s_E= \min(k_E+1, r_E)$ and $s_F = \min(k_F+1, r_F)$ and for every $E \in \T$ and $F \in \Gamma_h$, the constants $G_E$, $G_F$ and $G_F^\cap$ are defined as:
\begin{align*}
G_E(h_E,k_E,\bar{\boldsymbol{\nu}}_E) &= \bar{\boldsymbol{\nu}}_E + h_E k_E^{-1} \max_{F \subset \partial E \setminus \Gamma} \sigma_F 
 + (\alpha_{\Gamma} + \beta_{\Gamma})h_E k_E^{-1} \\
& \quad + \bar{\boldsymbol{\nu}}_E^2  h_E^{-1}k_E  \max_{F \subset \partial E \setminus \Gamma} \sigma_F^{-1} +  \bar{\boldsymbol{\nu}}_E^2 h_E^{-1} k_E^2 \max_{F \subset \partial E \setminus \Gamma} \sigma_F^{-1},\\
G_F(h_F,k_F,\bar{\boldsymbol{\nu}}^\tau_F) &= \bar{\boldsymbol{\nu}}_F^\tau  + h_F k_F^{-1} \max_{e \subseteq \partial F \setminus (\mathcal{I}_\cap \cup \partial \Gamma_N \cup \partial \Gamma_F) } \sigma_e
 + \alpha_{\Gamma} h_F^2 k_F^{-2} \\
& \quad + (\bar{\boldsymbol{\nu}}_F^\tau )^2  h_F^{-1}k_F  \max_{e \subseteq \partial F \setminus (\mathcal{I}_\cap \cup \partial \Gamma_N \cup \partial \Gamma_F) } \sigma_e^{-1} \\
& \quad +  (\bar{\boldsymbol{\nu}}_F^\tau )^2 h_F^{-1} k_F^2 \max_{e \subseteq \partial F \setminus (\mathcal{I}_\cap \cup \partial \Gamma_N \cup \partial \Gamma_F)} \sigma_e^{-1}\\
G_F^\cap(h_F,k_F,\bar{\boldsymbol{\nu}}^\tau_F) &=   h_F k_F^{-1} \max_{e \subseteq \partial F \ \cap \mathcal{I}_\cap} \sigma_e^\cap  \\
& \quad+ (\bar{\boldsymbol{\nu}}_F^\tau )^2  h_F^{-1}k_F  \max_{e \subseteq \partial F \cap \mathcal{I}_\cap } (\sigma_e^\cap)^{-1} +  (\bar{\boldsymbol{\nu}}_F^\tau )^2 h_F^{-1} k_F^2 \max_{e \subseteq \partial F \cap \mathcal{I}_\cap} (\sigma_e^\cap)^{-1}.
\end{align*}
\end{teo}

\begin{proof}
From Lemma \ref{lem strang} we know that the error satisfies the following bound 
\begin{multline}\label{errore da strang}
|||(p, p_{\Gamma})-(p_h, p_{\Gamma,h})||| \lesssim \underbrace{\inf_{(q,q_{\Gamma}) \in Q_h^b \times Q_h^{\Gamma}} |||(p, p_{\Gamma})-(q, q_{\Gamma})||| }_{I}\\  + \underbrace{\sup_{(w,w_{\Gamma}) \in Q_h^b \times Q_h^{\Gamma}} \frac{|\mathcal{R}_h((p, p_{\Gamma}),(w, w_{\Gamma}))|}{|||(w, w_{\Gamma})|||}}_{II}.
\end{multline}
 We estimate the two terms on the right-hand side of \eqref{errore da strang} separately. We can rewrite term I as
 \begin{align*}
I &= \inf_{(q,q_{\Gamma}) \in Q_h^b \times Q_h^{\Gamma}} \Big( ||p- q||_{b,DG}^2 +  ||p_{\Gamma}-q_{\Gamma}||^2_{\Gamma,DG}+  ||(p- q, p_{\Gamma}-q_{\Gamma})||_{\mathcal{C}}^2 \Big)\\
& \leq \underbrace{ ||p- \widetilde{\Pi}p||_{b,DG}^2}_{(a)} + \underbrace{ ||p_{\Gamma}-\widetilde{\Pi} p_\Gamma ||^2_{\Gamma, DG}}_{(b)}  \,+ \underbrace{ ||(p- \widetilde{\Pi}p,p_{\Gamma}- \widetilde{\Pi} p_\Gamma)||_{\mathcal{C}}^2}_{(c)}. 
 \end{align*}
We consider each of the three terms separately. To bound term (a), we exploit the two approximation results stated in Lemma  \ref{interpolazione poligoni}; we obtain that
\begin{align*}
(a)& \leq ||p- \widetilde{\Pi}p||_{b,DG}^2 = \sum_{E \in \mathcal{T}_h} ||\boldsymbol{\nu}^{1/2} \nabla(p-\widetilde{\Pi}p)||^2_{L^2(E)} + \sum_{F \in \F^I \cup \F^D} \sigma_F ||\llbracket p-  \widetilde{\Pi}p \rrbracket||^2_{L^2(F)} \\
&\lesssim  \sum_{E \in \mathcal{T}_h} \Big[ \bar{\boldsymbol{\nu}}_E |p- \widetilde{\Pi}p|^2_{H^1(E)} + (\max_{F \subset \partial E \setminus (\Gamma \cup \partial \Omega_N)} \sigma_F) || p- \widetilde{\Pi}p||^2_{L^2(\partial E)} \Big]\\
& \lesssim \sum_{E \in \mathcal{T}_h} \Big[ \frac{h_E^{2(s_E-1)} }{k_E^{2(r_E-1)}} \bar{\boldsymbol{\nu}}_E || \mathscr{E}p||_{H^{r_E}(T_E)}^2 +  \frac{h_E^{2(s_E-1/2)}}{k_E^{2(r_E-1/2)}} (\max_{F \subset \partial E \setminus (\Gamma \cup \partial \Omega_N)} \sigma_F) ||\mathscr{E}p||^2_{H^{r_E}(T_E)} \Big]\\
& = \sum_{E \in \mathcal{T}_h}   \frac{h_E^{2(s_E-1)}}{k_E^{2(r_E-1)}}|| \mathscr{E}p||_{H^{r_E}(T_E)}^2 \Big( \bar{\boldsymbol{\nu}}_E +   \frac{h_E}{k_E} (\max_{F \subset \partial E \setminus (\Gamma \cup \partial \Omega_N)} \sigma_F) \Big).
\end{align*} 
Using analogous interpolation estimates on the fracture we can bound term (b) as follows:
\begin{align*}
 (b) &\leq ||p_{\Gamma}- \widetilde{\Pi} p_\Gamma||_{\Gamma,DG}^2 \\
 & \lesssim \sum_{F \in \Gamma_h}||\boldsymbol{\nu}_{\Gamma}^\tau \ell_{\Gamma} \nabla (p_\Gamma - \widetilde{\Pi} p_\Gamma )||_{L^2(F)}^2 + \sum_{e \in \E^I \cup \E^D \cup \E^\cap} \sigma_e ||\llbracket p_\Gamma - \widetilde{\Pi} p_\Gamma  \rrbracket ||^2_{L^2(e)} \\
 & \lesssim \sum_{F \in \Gamma_h} \frac{h_F^{2(s_F -1)}}{k_F^{2(r_F -1)}} || \mathscr{E}_\Gamma p_\Gamma ||^2_{H^{r_F}(T_F)}  \left(\bar{\boldsymbol{\nu}}^{\tau}_{F}  + \frac{h_F}{k_F} \max_{e \subseteq \partial F \setminus (\mathcal{I}_\cap \cup \partial \Gamma_N \cup \partial \Gamma_F) } \sigma_e \right) \\
 & \quad  + \sum_{k=1}^{N_\Gamma} \sum_{\substack{F \in \gamma_{h,k}\\ \partial F \cap \mathcal{I}_\cap \neq \emptyset}} \frac{h_F^{2(s_F -1)}}{k_F^{2(r_F-1)}} || \mathscr{E}_{\gamma_k} p_{\Gamma}^k||^2_{H^{r_F}(T_F)}
 \big(  \frac{h_F}{k_F}  \max_{e \subseteq \partial F \ \cap \mathcal{I}_\cap} \sigma_e^\cap \big). 
\end{align*}

Finally, for term (c), we have
\begin{multline*}
(c) \leq ||(p-\widetilde{\Pi}p,p_{\Gamma}-\widetilde{\Pi} p_\Gamma)||_{\mathcal{C}}^2 \leq \beta_{\Gamma} \sum_{F \in \Gamma_h} ||\llbracket p-  \widetilde{\Pi}p \rrbracket||_{L^2(F)}^2 + \alpha_{\Gamma} \sum_{F \in \Gamma_h} ||\{ p-  \widetilde{\Pi}p \}||_{L^2(F)}^2 \\  + \alpha_{\Gamma} \sum_{F \in \Gamma_h} ||p_{\Gamma} - \widetilde{\Pi} p_\Gamma||^2_{L^2(F)}. 
\end{multline*}
 Exploiting the interpolation result \eqref{stima interp su bordo}, we deduce that
 \begin{align*}
 \beta_{\Gamma}\hspace{-2mm} \sum_{F \in \Gamma_h} ||\llbracket p-  \widetilde{\Pi}p \rrbracket||_{L^2(F)}^2 &\leq \beta_{\Gamma} \hspace{-3mm} \sum_{\substack{E \in \mathcal{T}_h \\ \partial E \cap \Gamma \neq \emptyset}} ||p- \widetilde{\Pi}p||^2_{L^2(\partial E)} 
  \lesssim   \beta_{\Gamma} \hspace{-3mm}\sum_{\substack{E \in \mathcal{T}_h \\ \partial E \cap \Gamma \neq \emptyset}} \frac{h_E^{2(s_E-\frac{1}{2})}}{k_E^{2(r_E-\frac{1}{2})}}|| \mathscr{E}p||_{H^{r_E}(T_E)}^2\\
 &=  \beta_{\Gamma} \sum_{ \substack{ E \in \mathcal{T}_h \\ \partial E \cap \Gamma \neq \emptyset}}  \frac{h_E^{2(s_E-1)}}{k_E^{2(r_E-1)}} || \mathscr{E}p||^2_{H^{r_E}(T_E)} \frac{h_E}{k_E}.
 \end{align*}
Similarly, we have
\[\alpha_{\Gamma} \sum_{F \in \Gamma_h} ||\{ p-  \widetilde{\Pi}p \}||_{L^2(F)}^2  \lesssim \alpha_{\Gamma} \sum_{ \substack{ E \in \mathcal{T}_h \\ \partial E \cap \Gamma \neq \emptyset}}  \frac{h_E^{2(s_E-1)}}{k_E^{2(r_E-1)}} || \mathscr{E}p||^2_{H^{r_E}(T_E)} \frac{h_E}{k_E} .\]
Moreover, using interpolation estimates for the fracture network, we obtain
\begin{align*}
\alpha_{\Gamma} \sum_{F \in \Gamma_h} ||p_{\Gamma}- \widetilde{\Pi}p_{\Gamma}||_{L^2(F)}^2 &\lesssim  \alpha_\Gamma \sum_{F \in \Gamma_h} \frac{h_F^{2s_F}}{k^{2r_F}}||  \mathscr{E}p_\Gamma ||^2_{H^{r_F}(T_F)} \\ &= \alpha_\Gamma \sum_{F \in \Gamma_h} \frac{h_F^{2(s_F -1)}}{k_F^{2(r_F -1)}} || \mathscr{E}p_\Gamma ||^2_{H^{r_F}(T_F)} \frac{h_F^2}{k_F^2}.
\end{align*}
Combining all the previous estimates, we can bound term I on the right-hand side of \eqref{errore da strang} as follows:
\begin{multline} \label{stima I}
I   \lesssim  \frac{h_E^{2(s_E-1)}}{k_E^{2(r_E-1)}} || \mathscr{E}p||^2_{H^{r_E}(T_E)} \Big[ \bar{\boldsymbol{\nu}}_E + \frac{h_E}{ k_E}  \max_{F \subset \partial E \setminus (\Gamma \cup \partial \Omega_N)} \sigma_F  + (\alpha_{\Gamma}+ \beta_{\Gamma}) \frac{h_E}{ k_E} \Big] \\
+ \sum_{F \in \Gamma_h} \frac{h_F^{2(s_F -1)}}{k_F^{2(r_F -1)}} || \mathscr{E}_\Gamma p_\Gamma ||^2_{H^{r_F}(T_F)} \left[ \bar{\boldsymbol{\nu}}^{\tau}_{F} + \frac{h_F}{k_F} \max_{e \subseteq \partial F \setminus (\mathcal{I}_\cap \cup \partial \Gamma_N \cup \partial \Gamma_F) } \sigma_e + \alpha_\Gamma \frac{h_F^2}{k_F^2} \right] \\
 + \sum_{k=1}^{N_\Gamma} \sum_{\substack{F \in \gamma_{h,k}\\ \partial F \cap \mathcal{I}_\cap \neq \emptyset}} \frac{h_F^{2(s_F -1)}}{k_F^{2(r_F-1)}} || \mathscr{E}_{\gamma_k} p_{\Gamma}^k||^2_{H^{r_F}(T_F)}
 \left[  \frac{h_F}{k_F}  \max_{e \subseteq \partial F \ \cap \mathcal{I}_\cap} \sigma_e^\cap \right] .
\end{multline}
Finally, the desired estimate follows from the combination of \eqref{stima I}, together with the bound on Term II that derives from what observed in \eqref{split residuo} and Lemma \ref{stima residuo}.
\end{proof}

\section{Numerical experiments} \label{ntw:sec:numerical exp}
In this section we present several numerical examples, with increasing complexity, in order to validate the performance of our method. In the experiments the analytical solution is known, so that we are able to verify the convergence rates obtained in Theorem \ref{stima errore}. We point out that choice of the model coefficients is here made only with the aim of testing the effectiveness of the numerical method and it does not intend to have any physical meaning.
We remark that, in all the presented test cases, the pressure continuity condition at the intersection points \eqref{int pressure continuity} is satisfied, however in some of them the no flux condition \eqref{int no flux} does not hold. To take this into account, we need to modify formulation \eqref{discrete formulation}, adding on the right hand side the term
\begin{equation} \label{flux at intersection}
\int_{\mathcal{E}_{\Gamma,h}^\cap} \llbracket  \boldsymbol{\nu}_{\Gamma}^{\tau}  \ell_{\Gamma} \nabla_{\tau}p_{\Gamma} \rrbracket_\cap\{q_\Gamma\}_\cap,
\end{equation}
where the quantity $ \llbracket  \boldsymbol{\nu}_{\Gamma}^{\tau}  \ell_{\Gamma} \nabla_{\tau}p_{\Gamma} \rrbracket_\cap = \sum_{k=1}^{N_\Gamma} \bnu_{\gamma_k}^{\tau} \ell_k \nabla_{\tau}p_{\Gamma}^k \cdot \boldsymbol{\tau}_k$ is given.\\

For all the experiments we choose quadratic polynomial degree for both the bulk and fracture problems. Moreover, we always choose the permeability tensor in the bulk $\bnu =  \textbf{I}$, so as to focus mainly on the fracture problem.
All the numerical tests have been implemented in MATLAB$^\circledR$ and employ polygonal grids.

\subsection{Example 1: Vertical fracture}
As first test case, we modify a test case presented in \cite{mioUnafrattura}, splitting the single fracture in 3 parts. In particular, we consider the domain $\Omega= (0,1)^2$ and the fracture network composed of the fractures $\gamma_1=\{(x,y) \in \Omega: \;\; x=0.5,\; 0< y < 0.5\}$, $\gamma_2=\{(x,y) \in \Omega: \;\; x=0.5,\; 0.5< y< 0.75\}$ and $\gamma_3=\{(x,y) \in \Omega: \;\; x=0.5,\; 0.75< y< 1\}$, see Figure~\ref{fig: vertical fracture domain}. Note that both the tips of the fracture $\gamma_2$ are intersection tips.

\begin{figure}[!htb] 
\centering
\subfigure[Computational domain]{
\label{fig: vertical fracture domain}
\begin{tikzpicture} [scale=3.5]
\draw[->] (-0.2,0)--(1.2,0) node[right]{$x$};
\draw[->] (0,-0.2)--(0,1.2) node[above]{$y$};
\draw[line width=0.5mm] (0,0)-- (1,0)--(1,1)--(0,1)--(0,0);
\draw[line width=1mm,blueMatlab] (0.5,0)--(0.5,0.5);
\draw[line width=1mm,redMatlab] (0.5,0.5)--(0.5,0.75);
\draw[line width=1mm,yellowMatlab] (0.5,0.75)--(0.5,1);
\filldraw[red, draw= black] (0.5,0.5) circle(0.027);
\filldraw[red, draw= black] (0.5,0.75) circle(0.027);
\node at (0.60,0.25){\small $\gamma_1$};
\node at (0.60,0.62){\small $\gamma_2$};
\node at (0.60,0.87){\small $\gamma_3$};
\node at (0.5,-0.1){\small $x=0.5$};
\node at (0.35,0.5){\small $\mathcal{I}^\cap_{1,2}$};
\node at (0.35,0.75){\small  $\mathcal{I}^\cap_{2,3}$};

\end{tikzpicture}
}
\subfigure[Solution in the network]{
\label{fig: vertical fracture solution }
\includegraphics[scale=0.45]{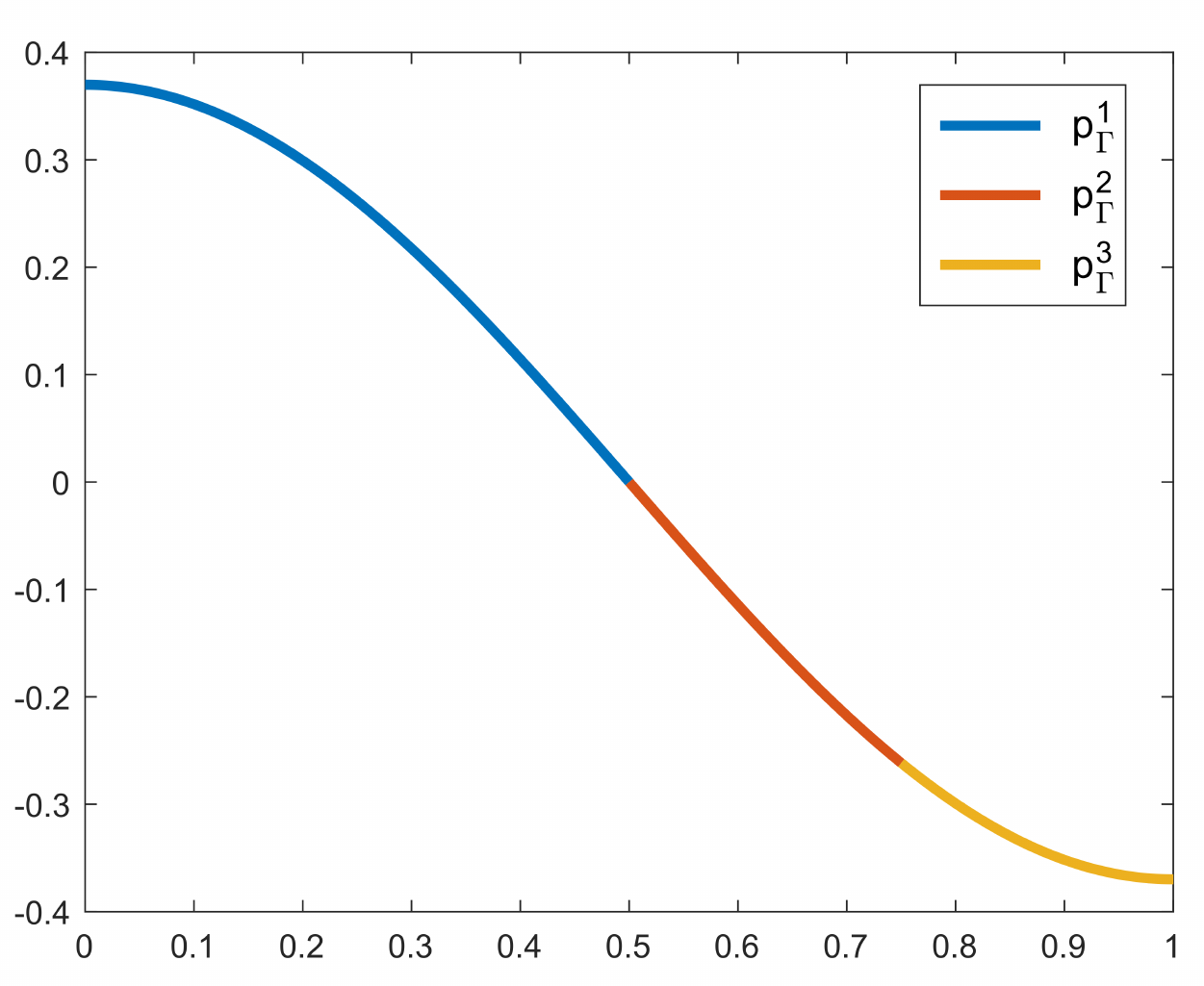}
}
\caption{Example 1: Computational domain (left) and computed fracture pressures (right).}
\end{figure}
 
We choose the exact solutions in the bulk and in the fractures as follows
 \begin{equation*}
 p  =\begin{cases}  \sin(4x)\cos(\pi y)& \mbox{if } x<0.5, \\ \cos(4x)\cos(\pi y) & \mbox{if } x>0.5, \end{cases} \quad 
   p_{\Gamma}^k=\xi[\cos(2) + \sin(2)] \cos(\pi y), \;k=1,2,3
 \end{equation*}
 so that they satisfy the coupling conditions \eqref{CC} with $\boldsymbol{\nu}= \textbf{I}$, provided that, $\forall k=1,2,3$, we choose $\beta_{\gamma_k}=2$ that is $\bnu_{\gamma_k}^n / \ell_k= 4$.  We impose Dirichlet boundary conditions on the whole $\partial \Omega$ and also on $\partial \Gamma$. Finally, the source term in the bulk is chosen accordingly as 
  \begin{equation*}
   f = \begin{cases} \sin(4x)\cos(\pi y)(16 + \pi^2) & \mbox{if }x<0.5, \\\cos(4x)\cos(\pi y)(16+\pi^2) & \mbox{if } x>0.5,\end{cases} 
\end{equation*}  
and, given $\forall k=1,2,3$ the values $\boldsymbol{\nu}_{\gamma_k}^{\tau}$of the tangential components of the permeability tensor in the fracture, the fracture forcing terms are set as 
 \begin{equation*} 
   f_{\Gamma}^k = \cos(\pi y)[\cos(2) + \sin(2)](\xi \boldsymbol{\nu}_{\gamma_k}^{\tau} \pi^2 + \frac{4}{\ell_k}).
\end{equation*}  
Clearly, pressure continuity at the intersection point \eqref{int pressure continuity} is satisfied regardless of the values chosen for the fracture coefficients $\bnu_\Gamma^\tau$, $\nu_\Gamma^n$ and $\ell_\Gamma$. However, flux conservation \eqref{int no flux} does not hold if the values vary from fracture to fracture. For this reason, we need to modify the right hand side of the formulation as in \eqref{flux at intersection}.

We perform two simulations, varying the values of the fracture coefficients (always satisfying the constraint $\beta_\Gamma=2$). In particular, we take
\begin{itemize} 
\item Case (a):
\vspace{-.75cm}
\begin{align} \label{coeff fratt vert A}
\bnu_\Gamma^\tau &= [3 \cdot 10^4 \;\; 2 \cdot 10^3 \;\; 4\cdot 10^4 ],\\
 \nu_\Gamma^n &=4*[10^{-4} \; 10^{-2} \; 10^{-5}],\\
 \ell_\Gamma &=[10^{-4} \; 10^{-2} \; 10^{-5}];
\end{align} 
\item Case (b):
\vspace{-.75cm}
\begin{align} \label{coeff fratt vert B}
\bnu_\Gamma^\tau &= [3 \cdot 10^{-4} \;\; 2\cdot 10^{-3} \;\; 4\cdot 10^{-4} ],\\
 \nu_\Gamma^n &=[10^{4} \; 10^{2} \; 10^{5}],\\
 \ell_\Gamma &=0.25*[10^{4} \; 10^{2} \; 10^{5}];
\end{align} 
\end{itemize}
Finally, in all the experiments we set $\xi = 0.75$. \\

\noindent In Figure~\ref{fig: vertical fracture solution } we show the numerical solution for the problem in the fracture network for case (a), where one can clearly see that the continuity condition at the intersection points \eqref{int pressure continuity} is satisfied.
In Figures \ref{fig:vertical errori caso a}-\ref{fig:vertical errori caso b} we report the computed errors $||p-p_h||_{b,DG}$ (loglog scale) for the bulk problem as a function of the inverse of the mesh size $1/h$ and the corresponding computed errors $||p_{\Gamma}- p_{\Gamma,h}||_{\Gamma,DG}$ (loglog scale) in the fracture network. We recall that we are taking the polynomial degree $k=2$ for both the bulk and fracture problems. On the left we show the results obtained for test case (a) (with coefficients as in \eqref{coeff fratt vert A}), while on the right, we report the results for the case (b) (with coefficient as in \eqref{coeff fratt vert B}). As predicted from our theoretical error bounds, a convergence of order 2 is clearly observed for both $||p-p_h||_{b,DG}$ and  $||p_{\Gamma}- p_{\Gamma,h}||_{\Gamma,DG}$. Moreover, the convergence is improved of one order if we consider the errors in the $L^2$-norms $||p-p_h||_{L^2(\Omega)}$ and $||p_{\Gamma}- p_{\Gamma,h}||_{L^2(\Gamma)}$.   
 
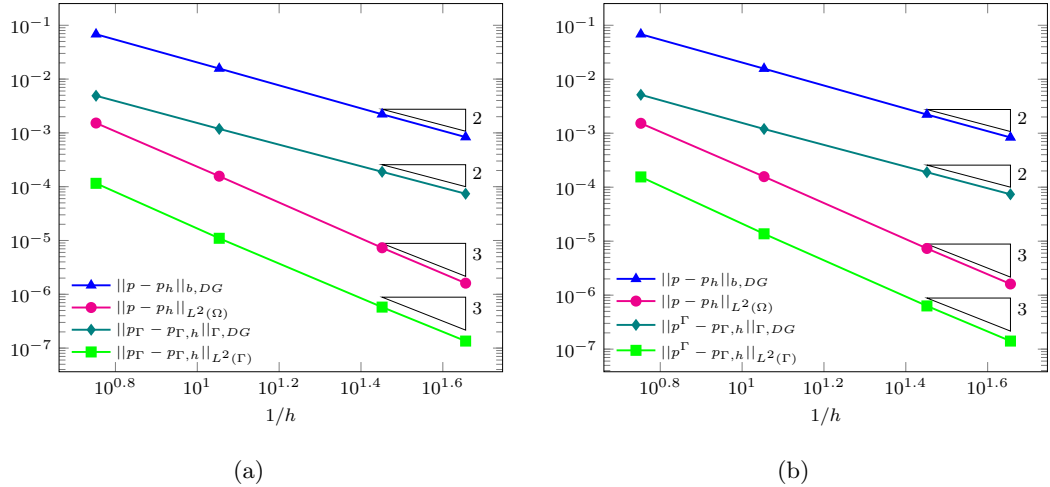
\begin{figure}[!htb]\label{fig:vertical error}
\subfigure[]{
\label{fig:vertical errori caso a}
   \resizebox{0.45\linewidth}{!}{
\begin{tikzpicture}
\scriptsize
\begin{loglogaxis}[
legend entries={$||p-p_h||_{b,DG}$,  $||p-p_h||_{L^2(\Omega)}$, $||p_{\Gamma}- p_{\Gamma,h}||_{\Gamma, DG}$, $||p_{\Gamma}- p_{\Gamma,h}||_{L^2(\Gamma)}$},
legend pos=outer north east,
legend style={font=\tiny, at={(0.01,0.01)},anchor=south west},
legend cell align=left,
 legend style={fill=none,draw=none},
xlabel=$1/h$
]
\addplot [blue, mark=triangle*, line width=0.3mm] table {vertical_fracture/caso_1/errH1.txt};
\addplot [magenta, mark=otimes*, line width=0.3mm] table {vertical_fracture/caso_1/errL2.txt};
\addplot [teal, mark=diamond*, line width=0.3mm] table {vertical_fracture/caso_1/errfracH1.txt};
\addplot [green, mark=square*, line width=0.3mm] table {vertical_fracture/caso_1/errfracL2.txt};

\def \xA {4.5254833995939038e+01};
\def \yA {4.8828125000000011e-04*2.2};
\def \xB {2.8284271247461753e+01 };
\def \yB {1.2500000000000133e-03*2.2};
\def \yC {0.5*\yA + 0.5*\yB};

\draw (\xA, \yA) -- (\xB, \yB)--(\xA, \yB)--cycle;
\draw (\xA, \yC) node [right] {2};

\def \xD {4.5254833995939038e+01 };
\def \yD {1.0789593218788875e-05*0.2};
\def \xE {2.8284271247461753e+01 };
\def \yE {4.4194173824159918e-05*0.2};
\def \yF {0.5*\yD + 0.5*\yE};

\draw (\xD, \yD) -- (\xE, \yE)--(\xD, \yE)--cycle;
\draw (\xD, \yF) node [right] {3};

%

\def \xG {4.5254833995939038e+01};
\def \yG {4.8828125000000011e-04*0.205};
\def \xH {2.8284271247461753e+01 };
\def \yH {1.2500000000000133e-03*0.205};
\def \yL {0.5*\yG + 0.5*\yH};

\draw (\xG, \yG) -- (\xH, \yH)--(\xG, \yH)--cycle;
\draw (\xG, \yL) node [right] {2};

\def \xM {4.5254833995939038e+01 };
\def \yM {1.0789593218788875e-05*0.02};
\def \xN {2.8284271247461753e+01 };
\def \yN {4.4194173824159918e-05*0.02};
\def \yQ {0.5*\yM + 0.5*\yN};

\draw (\xM, \yM) -- (\xN, \yN)--(\xM, \yN)--cycle;
\draw (\xM, \yQ) node [right] {3};

\end{loglogaxis}
\end{tikzpicture}
}
}
\subfigure[]{
\label{fig:vertical errori caso b}
  \resizebox{0.45\linewidth}{!}{
\begin{tikzpicture}
\scriptsize
\begin{loglogaxis}[
legend entries={$||p-p_h||_{b,DG}$,  $||p-p_h||_{L^2(\Omega)}$, $||p^{\Gamma}- p_{\Gamma,h}||_{\Gamma, DG}$, $||p^{\Gamma}- p_{\Gamma,h}||_{L^2(\Gamma)}$},
legend pos=outer north east,
legend style={font=\tiny, at={(0.01,0.01)},anchor=south west},
legend cell align=left,
 legend style={fill=none,draw=none},
xlabel=$1/h$
]
\addplot [blue, mark=triangle*, line width=0.3mm] table {vertical_fracture/caso_4/errH1.txt};
\addplot [magenta, mark=otimes*, line width=0.3mm] table {vertical_fracture/caso_4/errL2.txt};
\addplot [teal, mark=diamond*, line width=0.3mm] table {vertical_fracture/caso_4/errfracH1.txt};
\addplot [green, mark=square*, line width=0.3mm] table {vertical_fracture/caso_4/errfracL2.txt};

\def \xA {4.5254833995939038e+01};
\def \yA {4.8828125000000011e-04*2.2};
\def \xB {2.8284271247461753e+01 };
\def \yB {1.2500000000000133e-03*2.2};
\def \yC {0.5*\yA + 0.5*\yB};

\draw (\xA, \yA) -- (\xB, \yB)--(\xA, \yB)--cycle;
\draw (\xA, \yC) node [right] {2};

\def \xD {4.5254833995939038e+01 };
\def \yD {1.0789593218788875e-05*0.2};
\def \xE {2.8284271247461753e+01 };
\def \yE {4.4194173824159918e-05*0.2};
\def \yF {0.5*\yD + 0.5*\yE};

\draw (\xD, \yD) -- (\xE, \yE)--(\xD, \yE)--cycle;
\draw (\xD, \yF) node [right] {3};

%

\def \xG {4.5254833995939038e+01};
\def \yG {4.8828125000000011e-04*0.205};
\def \xH {2.8284271247461753e+01 };
\def \yH {1.2500000000000133e-03*0.205};
\def \yL {0.5*\yG + 0.5*\yH};

\draw (\xG, \yG) -- (\xH, \yH)--(\xG, \yH)--cycle;
\draw (\xG, \yL) node [right] {2};

\def \xM {4.5254833995939038e+01 };
\def \yM {1.0789593218788875e-05*0.02};
\def \xN {2.8284271247461753e+01 };
\def \yN {4.4194173824159918e-05*0.02};
\def \yQ {0.5*\yM + 0.5*\yN};

\draw (\xM, \yM) -- (\xN, \yN)--(\xM, \yN)--cycle;
\draw (\xM, \yQ) node [right] {3};

\end{loglogaxis}
\end{tikzpicture}
}
}

\caption{Example 1: Computed errors in the bulk and in the fractures as a function of the inverse of the mesh size (loglog scale). Case (a) on the left and case (b) on the right.}
 
\end{figure}

\subsection{Example 2: Y-shaped intersection}
In the second test case we take the bulk $\Omega= (-2,2)^2$ and the fracture network $\Gamma$ consisting of the fractures $\gamma_1=\{(x,y) \in \Omega: \;\; x=y,\; -2< y< 0\}$, $\gamma_2=\{(x,y) \in \Omega: \;\; x=y,\; 0< y< 2\}$ and $\gamma_3=\{(x,y) \in \Omega: \;\; x=0,\; 0< y< 2\}$, see Figure~\ref{fig: Y shaped domain}.
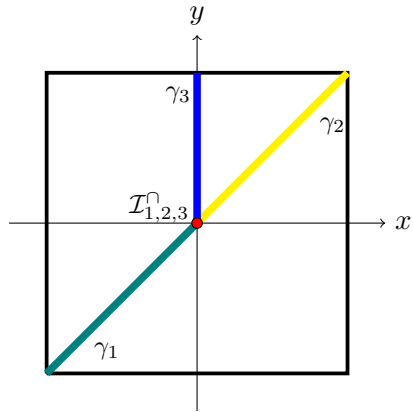
\begin{figure}[!htb]
\centering
\begin{tikzpicture}[scale=1] 
\draw[->] (-2.5,0)--(2.5,0) node[right]{$x$};
\draw[->] (0,-2.5)--(0,2.5) node[above]{$y$};
\draw[line width=0.5mm] (-2,-2)-- (2,-2)--(2,2)--(-2,2)--(-2,-2);
\draw[line width=1mm,teal] (-2,-2)--(0,0);
\draw[line width=1mm,yellow] (0,0)--(2,2);
\draw[line width=1mm,blue] (0,0)--(0,2);
\filldraw[red, draw= black] (0,0) circle(0.07);
\node at (-1.2,-1.7){\small $\gamma_1$};
\node at (1.8,1.3){\small $\gamma_2$};
\node at (-0.25,1.7){\small $\gamma_3$};
\node at (-0.5,0.2){\small $\mathcal{I}^\cap_{1,2,3}$};
\end{tikzpicture}
\caption{Example 2: computational domain.}
\label{fig: Y shaped domain}
\end{figure}
We choose the exact solution in the whole bulk as $p(x,y)=\cos(xy-x^2)$ and the permeability tensor $\boldsymbol{\nu}= \textbf{I}$. Note that, even if the bulk solution is continuous across the fractures, the first coupling condition in \eqref{CC} is satisfied because $\nabla p|_\Gamma=\textbf{0}$. In order for the second coupling condition to hold, we need to choose the solution in the fractures $p_{\Gamma}^k=p|_{\Gamma_k}$ for all $k=1,2,3$, that is $p_{\Gamma}^k=1$. 
Note also that this configuration satisfies the conditions at the intersection \eqref{eq intersezione} irrespective of the choice of the model coefficients.
Finally, the source terms are chosen accordingly as  
 $f =  \cos(xy-x^2)(y^2+5x^2-4xy)-2 \sin(xy-x^2)$ and $f_{\Gamma} =\textbf{0}$.
We impose Dirichlet boundary conditions on the whole $\partial \Omega$ and also on $\partial \Gamma$. In the numerical experiments we choose $\xi = 0.55$.

We perform two simulations, taking the physical parameters in the fracture network as in the previous example, that is for case (a) we choose the coefficients as in \eqref{coeff fratt vert A}, while for case (b) as in \eqref{coeff fratt vert B}.

Figures \ref{fig:Y errori caso a}-\ref{fig:Y errori caso b} show the computed errors (in loglog scale) $||p-p_h||_{b,DG}$ and $||p_{\Gamma}- p_{\Gamma,h}||_{\Gamma,DG}$ for the bulk and fracture problem, respectively (case (a) on the left and case (b) on the right). Also in this case the theoretical convergence rates are achieved and one order is gained for the $L^2$-norm.
\begin{figure}[!htb]\label{fig:Y error}
\subfigure[]{
\label{fig:Y errori caso a}
   \resizebox{0.45\linewidth}{!}{
\begin{tikzpicture}
\scriptsize
\begin{loglogaxis}[
legend entries={$||p-p_h||_{b,DG}$,  $||p-p_h||_{L^2(\Omega)}$, $||p_{\Gamma}- p_{\Gamma,h}||_{\Gamma, DG}$, $||p_{\Gamma}- p_{\Gamma,h}||_{L^2(\Gamma)}$},
legend pos=outer north east,
legend style={font=\tiny, at={(0.01,0.01)},anchor=south west},
legend cell align=left,
 legend style={fill=none,draw=none},
xlabel=$1/h$
]
\addplot [blue, mark=triangle*, line width=0.3mm] table {Y_shaped/caso_1/errH1.txt};
\addplot [magenta, mark=otimes*, line width=0.3mm] table {Y_shaped/caso_1/errL2.txt};
\addplot [teal, mark=diamond*, line width=0.3mm] table {Y_shaped/caso_1/errfracH1.txt};
\addplot [green, mark=square*, line width=0.3mm] table {Y_shaped/caso_1/errfracL2.txt};


\def \xA {1.3081475451951048e+01 };
\def \yA {5.8436815193572671e-03 *4.5};
\def \xB {1.1313708498984759e+01  };
\def \yB {  7.8125000000000017e-03*4.5};
\def \yC {0.5*\yA + 0.5*\yB};

\draw (\xA, \yA) -- (\xB, \yB)--(\xA, \yB)--cycle;
\draw (\xA, \yC) node [right] {2};

\def \xD {1.3081475451951048e+01 };
\def \yD { 4.4671425183049262e-04*0.5};
\def \xE {1.1313708498984759e+01 };
\def \yE {6.9053396600248797e-04*0.5};
\def \yF {0.5*\yD + 0.5*\yE};

\draw (\xD, \yD) -- (\xE, \yE)--(\xD, \yE)--cycle;
\draw (\xD, \yF) node [right] {3};

%

\def \xG {1.3081475451951048e+01 };
\def \yG {5.8436815193572671e-03 *0.0025};
\def \xH {1.1313708498984759e+01  };
\def \yH {  7.8125000000000017e-03*0.0025};
\def \yL {0.5*\yG + 0.5*\yH};

\draw (\xG, \yG) -- (\xH, \yH)--(\xG, \yH)--cycle;
\draw (\xG, \yL) node [right] {2};

\def \xM {1.3081475451951048e+01 };
\def \yM { 4.4671425183049262e-04*0.003};
\def \xN {1.1313708498984759e+01 };
\def \yN {6.9053396600248797e-04*0.003};
\def \yQ {0.5*\yM + 0.5*\yN};

\draw (\xM, \yM) -- (\xN, \yN)--(\xM, \yN)--cycle;
\draw (\xM, \yQ) node [right] {3};

\end{loglogaxis}
\end{tikzpicture}
}
}
\subfigure[]{
\label{fig:Y errori caso b}
  \resizebox{0.45\linewidth}{!}{
\begin{tikzpicture}
\scriptsize
\begin{loglogaxis}[
legend entries={$||p-p_h||_{b,DG}$,  $||p-p_h||_{L^2(\Omega)}$, $||p^{\Gamma}- p_{\Gamma,h}||_{\Gamma, DG}$, $||p^{\Gamma}- p_{\Gamma,h}||_{L^2(\Gamma)}$},
legend pos=outer north east,
legend style={font=\tiny, at={(0.01,0.01)},anchor=south west},
legend cell align=left,
 legend style={fill=none,draw=none},
xlabel=$1/h$
]
\addplot [blue, mark=triangle*, line width=0.3mm] table {Y_shaped/caso_3/errH1.txt};
\addplot [magenta, mark=otimes*, line width=0.3mm] table {Y_shaped/caso_3/errL2.txt};
\addplot [teal, mark=diamond*, line width=0.3mm] table {Y_shaped/caso_3/errfracH1.txt};
\addplot [green, mark=square*, line width=0.3mm] table {Y_shaped/caso_3/errfracL2.txt};

\def \xA {1.3081475451951048e+01 };
\def \yA {5.8436815193572671e-03 *4.2};
\def \xB {1.1313708498984759e+01  };
\def \yB {  7.8125000000000017e-03*4.2};
\def \yC {0.5*\yA + 0.5*\yB};

\draw (\xA, \yA) -- (\xB, \yB)--(\xA, \yB)--cycle;
\draw (\xA, \yC) node [right] {2};

\def \xD {1.3081475451951048e+01 };
\def \yD { 4.4671425183049262e-04*0.45};
\def \xE {1.1313708498984759e+01 };
\def \yE {6.9053396600248797e-04*0.45};
\def \yF {0.5*\yD + 0.5*\yE};

\draw (\xD, \yD) -- (\xE, \yE)--(\xD, \yE)--cycle;
\draw (\xD, \yF) node [right] {3};

%

\def \xG {1.3081475451951048e+01 };
\def \yG {5.8436815193572671e-03 *0.012};
\def \xH {1.1313708498984759e+01  };
\def \yH {  7.8125000000000017e-03*0.012};
\def \yL {0.5*\yG + 0.5*\yH};

\draw (\xG, \yG) -- (\xH, \yH)--(\xG, \yH)--cycle;
\draw (\xG, \yL) node [right] {2};

\def \xM {1.3081475451951048e+01 };
\def \yM { 4.4671425183049262e-04*0.009};
\def \xN {1.1313708498984759e+01 };
\def \yN {6.9053396600248797e-04*0.009};
\def \yQ {0.5*\yM + 0.5*\yN};

\draw (\xM, \yM) -- (\xN, \yN)--(\xM, \yN)--cycle;
\draw (\xM, \yQ) node [right] {3};

\end{loglogaxis}
\end{tikzpicture}
}
}

\caption{Example 2: Computed errors in the bulk and in the fractures as a function of the inverse of the mesh size (loglog scale). Case (a) on the left and case (b) on the right.}
 
\end{figure}
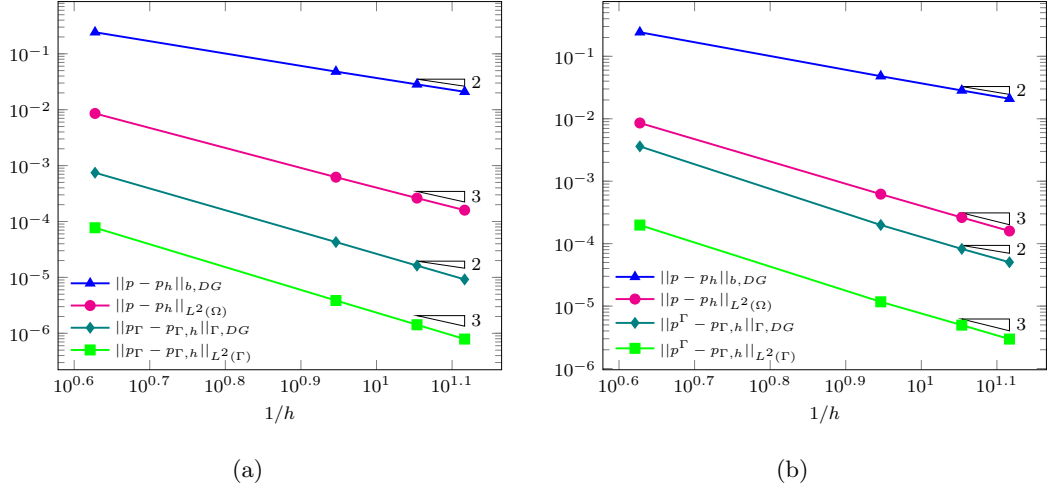

\subsection{Example 3: Checkerboard}
In the third test case we consider a cross-shaped network of fractures cutting the bulk $\Omega= (-1,1)^2$. The fractures are defined as $\gamma_1=\{(x,y) \in \Omega: \;\; y=0,\; -1 <x < -0.5\}$, $\gamma_2=\{(x,y) \in \Omega: \;\; y=0,\; -0.5 < x < 0\}$, $\gamma_3=\{(x,y) \in \Omega: \;\; x=0,\; -1 < y < 0\}$, $\gamma_4=\{(x,y) \in \Omega: \;\; y=0,\; 0< x< 1\}$ and $\gamma_5=\{(x,y) \in \Omega: \;\; x=0,\; 0< x< 1\}$, see Figure~\ref{fig: checkerboard domain}. Note that fracture $\gamma_2$ presents two intersection tips.

\begin{figure}[!htb] 
\centering
\subfigure[Computational domain]{
\label{fig: checkerboard domain}
\begin{tikzpicture} [scale=1.5] 
\draw[->] (-1.5,0)--(1.5,0) node[right]{$x$};
\draw[->] (0,-1.5)--(0,1.5) node[above]{$y$};
\draw[line width=0.5mm] (-1,-1)-- (1,-1);
\draw[line width=0.5mm, dashed] (1,-1)--(1,1);
\draw[line width=0.5mm] (1,1)--(-1,1)--(-1,-1);
\draw[line width=1mm,blueMatlab] (-1,0)--(-0.5,0);
\draw[line width=1mm,redMatlab] (-0.5,0)--(0,0);
\draw[line width=1mm,purpleMatlab] (0,-1)--(0,0);
\draw[line width=1mm,yellowMatlab] (0,0)--(1,0);
\draw[line width=1mm,greenMatlab] (0,0)--(0,1);
\filldraw[red, draw= black] (0,0) circle(0.033);
\filldraw[red, draw= black] (-0.5,0) circle(0.033);
\node at (-0.8,0.15){\small $\gamma_1$};
\node at (-0.3,0.15){\small $\gamma_2$};
\node at (0.15,-0.8){\small $\gamma_3$};
\node at (0.8,0.15){\small $\gamma_4$};
\node at (0.15,0.8){\small $\gamma_5$};
\node at (-0.35,-0.15){\small $\mathcal{I}^\cap_{1,2}$};
\node at (0.35,-0.15){\small $\mathcal{I}^\cap_{2,3,4,5}$};
\node at (1.3,0.3){\small $\partial \Omega_N$};
\node at (-1.2,-0.1){\scriptsize $-1$};
\node at (1.1,-0.1){\scriptsize $1$};
\node at (0.1,-1.1){\scriptsize $-1$};
\node at (0.1,1.1){\scriptsize $1$};
\end{tikzpicture}
}
\subfigure[Solution in the network]{
\label{fig: checkerboard solution}
\includegraphics[scale=0.5]{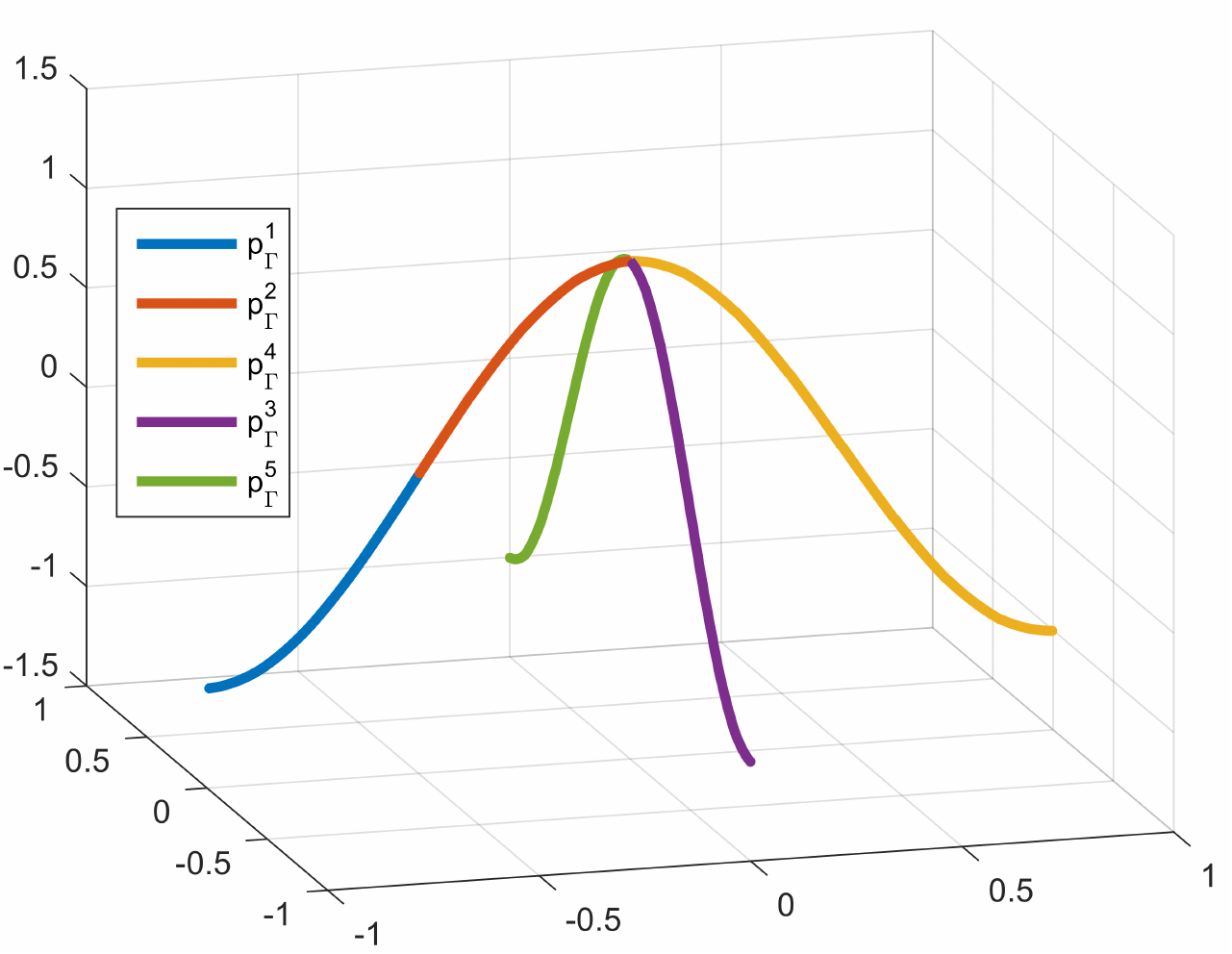}
}
\caption{Example 3: Computational domain (left) and computed fracture pressures plotted as $3d$-lines (right).}
\end{figure}
We choose again a solution in the bulk continuous across the fractures $p(x,y)=\cos(\pi x) \cos(\pi y)$ and the permeability tensor $\boldsymbol{\nu}= \textbf{I}$. 
In this case, the first coupling condition in \eqref{CC} is satisfied because $\nabla p|_\Gamma \cdot \textbf{n}_\Gamma=0$, where $\textbf{n}_{1}=\textbf{n}_{2}=\textbf{n}_{4} =(0,1)^T$ and $ \textbf{n}_{3} =\textbf{n}_{5}=(1,0)^T$.
The validity of the second coupling condition is satisfied if $p_{\Gamma}^k=p|_{\Gamma_k}$ for all $k=1,2,3,4,5$, that is $p_{\Gamma}^1=p_{\Gamma}^2=p_{\Gamma}^4=\cos(\pi x)$ and $p_{\Gamma}^3=p_{\Gamma}^5=\cos(\pi y)$. 

In the bulk, we impose Neumann boundary conditions on $\partial \Omega_N=\{(x,y) \in \Omega: \;\; x=1 \}$ and Dirichlet boundary conditions on the rest of the boundary. Accordingly, at the boundary tips of fractures $\gamma_1$, $\gamma_3$ and $\gamma_5$ we impose Dirichlet conditions, and at the boundary tip of $\gamma_4$ we impose Neumann conditions.
In the numerical experiments we choose $\xi = 0.55$.
Finally, the source term in the bulk is chosen accordingly as  
 $f =  2 \pi^2 \cos(\pi x) \cos(\pi y)$ and, given the physical coefficients $\bnu_{\Gamma_k}^\tau$ and $\ell_k$, for $k=1,2,3,4,5$, the source term for each fracture is $f_{\Gamma}^k = \pi^2 \cos(\pi x) \bnu_{\gamma_k}^\tau$. Note that, at intersection $\mathcal{I}^\cap_{1,2}$ flux conservation does not hold if the values of the coefficients vary from $\gamma_1$ to $\gamma_2$, while at intersection $\mathcal{I}^\cap_{2,3,4,5}$ flux conservation is satisfied for every choice, due to the fact that $\nabla p_\Gamma^k|_{\mathcal{I}^\cap_{2,3,4,5}}=0$, for $k=2,3,4,5$. \\
 
We perform two simulations:
\begin{itemize}
\item in case (a) we take $\ell_k=\bnu_{\gamma_k}^\tau= \nu_{\gamma_k}^n = k \cdot 10^k$, for $k=1,2,3,4,5$;
\item in case (b) we take $\ell_k=\bnu_{\gamma_k}^\tau= \nu_{\gamma_k}^n = k \cdot 10^{-k}$, for $k=1,2,3,4,5$.
\end{itemize}
In Figure~\ref{fig: checkerboard solution} we show the numerical solution for the fracture network problem computed with the coefficients of case (a). The values of the fracture pressures are displayed as lines in the $3d$ space, so that pressure continuity at the intersection points is evident.
The plots in Figures \ref{fig:checkerboard errori caso a}-\ref{fig:checkeborad errori caso b} show the computed errors in loglog scale for the bulk and network problems, together with the expected convergence rates. Test case (a) is on the left and test case (b) is on the right. Once again the results are in agreement with the theoretical estimates.

\begin{figure}[!htb]
\subfigure[]{
\label{fig:checkerboard errori caso a}
   \resizebox{0.45\linewidth}{!}{
\begin{tikzpicture}
\scriptsize
\begin{loglogaxis}[
legend entries={$||p-p_h||_{b,DG}$,  $||p-p_h||_{L^2(\Omega)}$, $||p_{\Gamma}- p_{\Gamma,h}||_{\Gamma, DG}$, $||p_{\Gamma}- p_{\Gamma,h}||_{L^2(\Gamma)}$},
legend pos=outer north east,
legend style={font=\tiny, at={(0.01,0.01)},anchor=south west},
legend cell align=left,
 legend style={fill=none,draw=none},
xlabel=$1/h$
]
\addplot [blue, mark=triangle*, line width=0.3mm] table {checkerboard/caso_1/errH1.txt};
\addplot [magenta, mark=otimes*, line width=0.3mm] table {checkerboard/caso_1/errL2.txt};
\addplot [teal, mark=diamond*, line width=0.3mm] table {checkerboard/caso_1/errfracH1.txt};
\addplot [green, mark=square*, line width=0.3mm] table {checkerboard/caso_1/errfracL2.txt};

\def \xA {2.2627416997969519e+01};
\def \yA {1.9531250000000004e-03*3};
\def \xB {1.1313708498984759e+01};
\def \yB {7.8125000000000017e-03*3};
\def \yC {0.5*\yA + 0.5*\yB};

\draw (\xA, \yA) -- (\xB, \yB)--(\xA, \yB)--cycle;
\draw (\xA, \yC) node [right] {2};

\def \xD {2.2627416997969519e+01 };
\def \yD {8.6316745750310997e-05*0.4};
\def \xE {1.1313708498984759e+01};
\def \yE {6.9053396600248797e-04*0.4};
\def \yF {0.5*\yD + 0.5*\yE};

\draw (\xD, \yD) -- (\xE, \yE)--(\xD, \yE)--cycle;
\draw (\xD, \yF) node [right] {3};

%

\def \xG {2.2627416997969519e+01};
\def \yG {1.9531250000000004e-03*0.7};
\def \xH {1.1313708498984759e+01};
\def \yH {7.8125000000000017e-03*0.7};
\def \yL {0.5*\yG + 0.5*\yH};

\draw (\xG, \yG) -- (\xH, \yH)--(\xH, \yG)--cycle;
\draw (\xH, \yL) node [left] {2};

\def \xM {2.2627416997969519e+01 };
\def \yM {8.6316745750310997e-05*0.05};
\def \xN {1.1313708498984759e+01};
\def \yN {6.9053396600248797e-04*0.05};
\def \yQ {0.5*\yM + 0.5*\yN};

\draw (\xM, \yM) -- (\xN, \yN)--(\xN, \yM)--cycle;
\draw (\xN, \yQ) node [left] {3};

\end{loglogaxis}
\end{tikzpicture}
}
}
\subfigure[]{
\label{fig:checkeborad errori caso b}
  \resizebox{0.45\linewidth}{!}{
\begin{tikzpicture}
\scriptsize
\begin{loglogaxis}[
legend entries={$||p-p_h||_{b,DG}$,  $||p-p_h||_{L^2(\Omega)}$, $||p^{\Gamma}- p_{\Gamma,h}||_{\Gamma, DG}$, $||p^{\Gamma}- p_{\Gamma,h}||_{L^2(\Gamma)}$},
legend pos=outer north east,
legend style={font=\tiny, at={(0.01,0.01)},anchor=south west},
legend cell align=left,
 legend style={fill=none,draw=none},
xlabel=$1/h$
]
\addplot [blue, mark=triangle*, line width=0.3mm] table {checkerboard/caso_2/errH1.txt};
\addplot [magenta, mark=otimes*, line width=0.3mm] table {checkerboard/caso_2/errL2.txt};
\addplot [teal, mark=diamond*, line width=0.3mm] table {checkerboard/caso_2/errfracH1.txt};
\addplot [green, mark=square*, line width=0.3mm] table {checkerboard/caso_2/errfracL2.txt};

\def \xA {2.2627416997969519e+01};
\def \yA {1.9531250000000004e-03*3.2};
\def \xB {1.1313708498984759e+01};
\def \yB {7.8125000000000017e-03*3.2};
\def \yC {0.5*\yA + 0.5*\yB};

\draw (\xA, \yA) -- (\xB, \yB)--(\xA, \yB)--cycle;
\draw (\xA, \yC) node [right] {2};

\def \xD {2.2627416997969519e+01 };
\def \yD {8.6316745750310997e-05*0.4};
\def \xE {1.1313708498984759e+01};
\def \yE {6.9053396600248797e-04*0.4};
\def \yF {0.5*\yD + 0.5*\yE};

\draw (\xD, \yD) -- (\xE, \yE)--(\xD, \yE)--cycle;
\draw (\xD, \yF) node [right] {3};

\end{loglogaxis}
\end{tikzpicture}
}
}

\caption{Example 3: Computed errors in the bulk and in the fractures as a function of the inverse of the mesh size (loglog scale). Case (a) on the left and case (b) on the right.}
 
\end{figure}
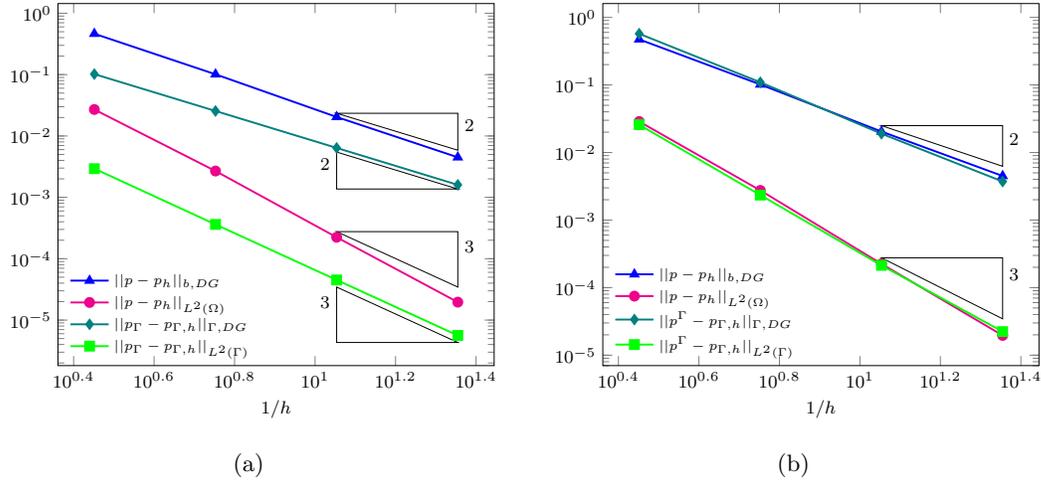

\subsection{Example 4: Cross-shaped intersection}
We consider the domain $\Omega= (0,1)^2$ cut by a cross-shaped network made up of the four fractures $\gamma_1=\{(x,y) \in \Omega: \;\; y=0.5,\; 0 < x< 0.5\}$, $\gamma_2=\{(x,y) \in \Omega: \;\; x=0.5,\; 0< y< 0.5\}$, $\gamma_3=\{(x,y) \in \Omega: \;\; y=0.5,\; 0.5< x< 1\}$ and $\gamma_4=\{(x,y) \in \Omega: \;\; x=0.5,\; 0.5 <y< 1\}$. The bulk domain $\Omega$ is then subdivided into the sets
\begin{align*}
\Omega_A &=\{(x,y) \in \Omega: \;\; 0< x< 0.5, \; 0<y<0.5\}, \\
\Omega_B&=\{(x,y) \in \Omega: \;\; 0.5< x< 1, \; 0<y<0.5\},\\
\Omega_C&=\{(x,y) \in \Omega: \;\; 0.5< x< 1, \; 0.5<y<1\}, \\
\Omega_D&=\{(x,y) \in \Omega: \;\; 0< x< 0.5, \; 0.5<y<1\},
\end{align*}
as shown in Figure~\ref{fig: cross domain}.

\begin{figure}[!htb]
\centering
\begin{tikzpicture}[scale=3.2] 
\draw[->] (-0.2,0)--(1.3,0) node[right]{$x$};
\draw[->] (0,-0.2)--(0,1.3) node[above]{$y$};
\draw[line width=0.5mm] (0,0)-- (1,0)--(1,1)--(0,1)--(0,0);
\draw[line width=1mm,teal] (0,0.5)--(0.5,0.5);
\draw[line width=1mm,yellow] (0.5,0)--(0.5,0.5);
\draw[line width=1mm,blue] (0.5,0.5)--(1,0.5);
\draw[line width=1mm,green] (0.5,0.5)--(0.5,1);
\filldraw[red, draw= black] (0.5,0.5) circle(0.033);
\node at (-0.1,0.5){\small $\gamma_1$};
\node at (0.5,-0.07){\small $\gamma_2$};
\node at (1.1,0.5){\small $\gamma_3$};
\node at (0.5,1.06){\small $\gamma_4$};
\node at (0.25,0.25){\small $\Omega_A$};
\node at (0.75,0.25){\small $\Omega_B$};
\node at (0.75,0.75){\small $\Omega_C$};
\node at (0.25,0.75){\small $\Omega_D$};
\node at (0.7,0.4){\tiny $\mathcal{I}^\cap_{1,2,3,4}$};
\node at (1,-0.07){\small $1$};
\node at (-0.07,1){\small $1$};
\end{tikzpicture}
\caption{Example 4: computational domain.}
\label{fig: cross domain}
\end{figure}

\noindent In order to define the exact solution for the bulk problem, we introduce the functions
\begin{align*}
p_l&= \sin(\frac{\pi}{2}x) \cos(2 \pi y),\\
p_r&= \cos(\frac{\pi}{2}x) \cos(2 \pi y),\\
p_u&= \cos(\frac{\pi}{2}y) \cos(2 \pi x),\\
p_d&= \sin(\frac{\pi}{2}y) \cos(2 \pi x),
\end{align*}
where the subscript is related to the position (left, right, up, down). The bulk pressure is then defined in each subdomain of $\Omega$ as
 \begin{equation*}
 p(x,y)  =\begin{cases}  p_l + p_d& \mbox{in} \, \Omega_A, \\
  p_r +p_d & \mbox{in} \, \Omega_B, \\
 p_r + p_u& \mbox{in} \, \Omega_C, \\
 p_l + p_u& \mbox{in} \, \Omega_D. 
 \end{cases} 
 \end{equation*}
If we choose the permeability tensor $\boldsymbol{\nu}= \textbf{I}$, the bulk source term will have the following expression $f(x,y)= \frac{17}{4} \pi^2 p(x,y)$.
Simple calculations show that $p(x,y)$ satisfies the first coupling condition in \eqref{CC} provided that for $ k=1,2,3,4$, we choose $\beta_{\gamma_k}=\frac{\pi}{4}$, that is $\bnu_{\gamma_k}^n = \frac{\pi}{2} \ell_k$. From the second coupling condition we deduce the following expressions for the solutions in the fractures
 \begin{align*}
  p_\Gamma^1 &= \xi \sqrt{2} \cos(2 \pi x) -\sin(\frac{\pi}{2}x),\hspace{2cm}
   &p_\Gamma^2 &= \xi \sqrt{2} \cos(2 \pi y) -\sin(\frac{\pi}{2}y),\\
   p_\Gamma^3 &= \xi \sqrt{2} \cos(2 \pi x) -\cos(\frac{\pi}{2}x),\hspace{2cm}
    & p_\Gamma^4 &= \xi \sqrt{2} \cos(2 \pi y) -\cos(\frac{\pi}{2}y).
 \end{align*}
Note that, with this choice, pressure continuity at the intersection point \eqref{eq intersezione} is ensured by the fact that $\cos(\frac{\pi}{4})=\sin(\frac{\pi}{4})$. However, flux conservation does not hold, so that we need to modify the right-hand-side of our formulation as in \eqref{flux at intersection}. 
Finally the source terms for the fracture problems are chosen accordingly as
 \begin{align*}
  f_\Gamma^1 &=  \cos(2 \pi x)[ \frac{\sqrt{2} \pi}{2 \ell_1} + 4 \pi^2 \xi \sqrt{2}  \bnu_{\gamma_1}^{\tau}] - \bnu_{\gamma_1}^{\tau} \frac{\pi^2}{4}\sin(\frac{\pi}{2}x),\\
   f_\Gamma^2 &=\cos(2 \pi y)[ \frac{\sqrt{2} \pi}{2 \ell_2} + 4 \pi^2 \xi \sqrt{2}  \bnu_{\gamma_2}^{\tau}] - \bnu_{\gamma_2}^{\tau} \frac{\pi^2}{4}\sin(\frac{\pi}{2}y),\\
    f_\Gamma^3 &= \cos(2 \pi x)[ \frac{\sqrt{2} \pi}{2 \ell_3} + 4 \pi^2 \xi \sqrt{2}  \bnu_{\gamma_3}^{\tau}] - \bnu_{\gamma_3}^{\tau} \frac{\pi^2}{4}\cos(\frac{\pi}{2}x),\\
    f_\Gamma^4 &= \cos(2 \pi y)[ \frac{\sqrt{2} \pi}{2 \ell_4} + 4 \pi^2 \xi \sqrt{2}  \bnu_{\gamma_4}^{\tau}] - \bnu_{\gamma_4}^{\tau} \frac{\pi^2}{4}\cos(\frac{\pi}{2}y).
 \end{align*}
We perform two simulations choosing the values of the physical coefficients as:
\begin{itemize}
\item in case (a) we take $\bnu_{\gamma_k}^\tau= \nu_{\gamma_k}^n = k \cdot 10^k$ and $\ell_k=\frac{2}{\pi}\nu_{\gamma_k}^n$, for $k=1,2,3,4$;
\item in case (b) we take $\bnu_{\gamma_k}^\tau= k \cdot 10^k$, $\ell_k = k \cdot 10^{-k}$  and $\nu_{\gamma_k}^n=\frac{\pi}{2} \ell_k$, for $k=1,2,3,4$.
\end{itemize}
In Figure~\ref{fig: cross solution bulk} we show the computed numerical solution for the problem in the bulk, with the  coefficients as in case (a). In Figure~\ref{fig: cross solution frac} we show the values of the fracture pressures as lines in the $3d$ space. Pressure continuity at the intersection point is clearly observed.
\begin{figure}[!htb] 
\subfigure[Solution in the bulk]{
\label{fig: cross solution bulk}
\includegraphics[scale=0.55]{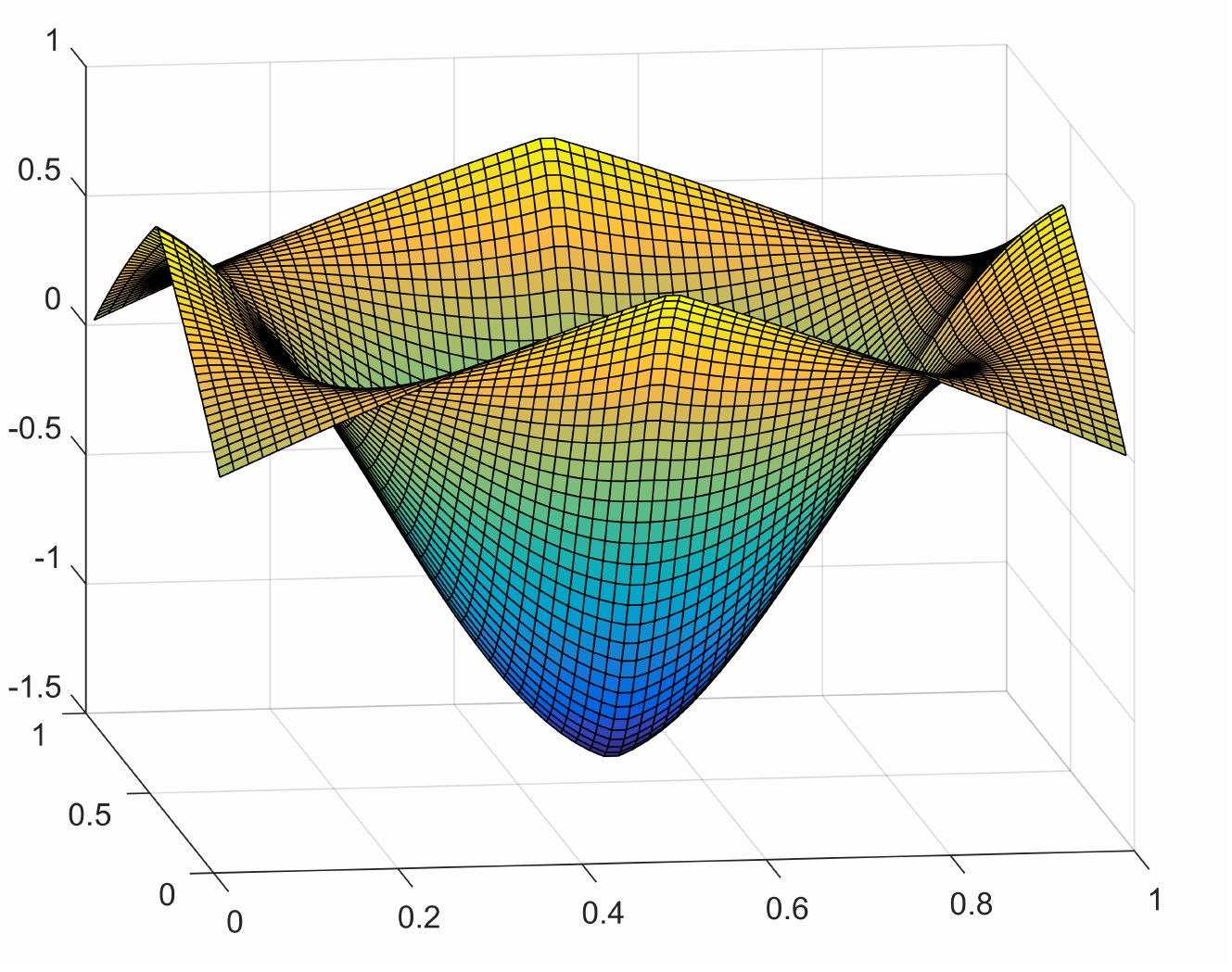}
}
\subfigure[Solution in the fractures]{
\label{fig: cross solution frac}
\includegraphics[scale=0.55]{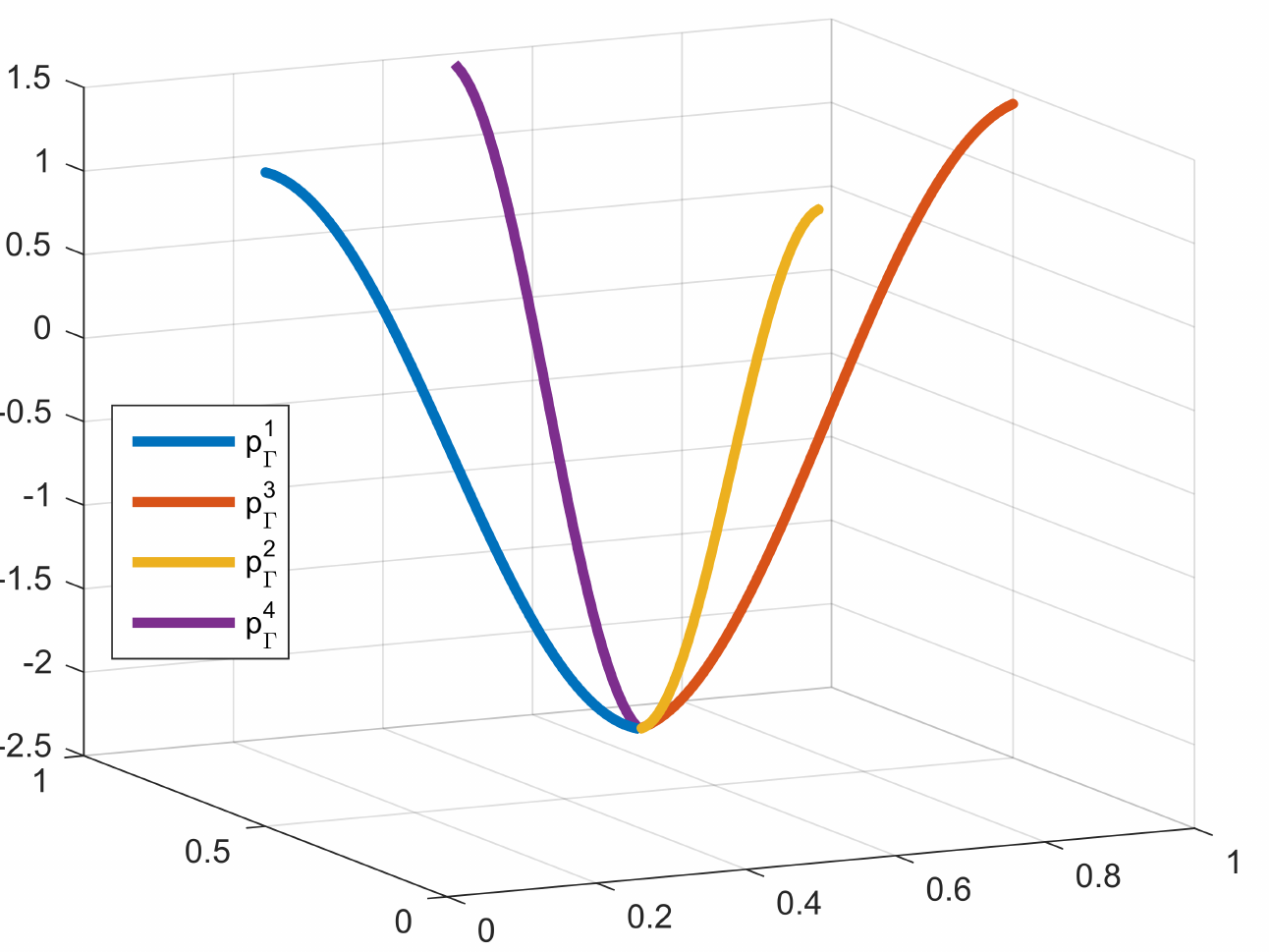}
}
\caption{Example 4: Computed bulk pressure (left) and computed fracture pressures plotted as $3d$-lines (right).}
\end{figure}

In Figures \ref{fig:vertical errori caso a}-\ref{fig:vertical errori caso b} we report the computed errors $||p-p_h||_{b,DG}$ and $||p_{\Gamma}- p_{\Gamma,h}||_{\Gamma,DG}$ in loglog scale for the bulk and fracture problems, respectively, as a function of the inverse of the mesh size $1/h$. On the left we show the results obtained for test case (a) and on the right for the case (b). Again, a convergence of order 2 is observed for both $||p-p_h||_{b,DG}$ and  $||p_{\Gamma}- p_{\Gamma,h}||_{\Gamma,DG}$, while a convergence of order 3 is observed for the error in the $L^2$-norm.

\begin{figure}[!htb]
\subfigure[]{
\label{fig:cross errori caso a}
   \resizebox{0.45\linewidth}{!}{
\begin{tikzpicture}
\scriptsize
\begin{loglogaxis}[
legend entries={$||p-p_h||_{b,DG}$,  $||p-p_h||_{L^2(\Omega)}$, $||p_{\Gamma}- p_{\Gamma,h}||_{\Gamma, DG}$, $||p_{\Gamma}- p_{\Gamma,h}||_{L^2(\Gamma)}$},
legend pos=outer north east,
legend style={font=\tiny, at={(0.01,0.01)},anchor=south west},
legend cell align=left,
 legend style={fill=none,draw=none},
xlabel=$1/h$
]
\addplot [blue, mark=triangle*, line width=0.3mm] table {cross/caso_1/errH1.txt};
\addplot [magenta, mark=otimes*, line width=0.3mm] table {cross/caso_1/errL2.txt};
\addplot [teal, mark=diamond*, line width=0.3mm] table {cross/caso_1/errfracH1.txt};
\addplot [green, mark=square*, line width=0.3mm] table {cross/caso_1/errfracL2.txt};

\def \xA {4.5254833995939038e+01};
\def \yA {4.8828125000000011e-04*8};
\def \xB {3.5355339059327342e+01 };
\def \yB {8.0000000000000145e-04*8};
\def \yC {0.5*\yA + 0.5*\yB};

\draw (\xA, \yA) -- (\xB, \yB)--(\xA, \yB)--cycle;
\draw (\xA, \yC) node [right] {2};

\def \xD {4.5254833995939038e+01 };
\def \yD {1.0789593218788875e-05*0.3};
\def \xE {3.5355339059327342e+01  };
\def \yE {2.2627416997969580e-05*0.3};
\def \yF {0.5*\yD + 0.5*\yE};

\draw (\xD, \yD) -- (\xE, \yE)--(\xE, \yD)--cycle;
\draw (\xE, \yF) node [left] {3};

%

\def \xG {4.5254833995939038e+01};
\def \yG {4.8828125000000011e-04*2.6};
\def \xH {3.5355339059327342e+01 };
\def \yH {8.0000000000000145e-04*2.6};
\def \yL {0.5*\yG + 0.5*\yH};

\draw (\xG, \yG) -- (\xH, \yH)--(\xH, \yG)--cycle;
\draw (\xH, \yL) node [left] {2};

\def \xM {4.5254833995939038e+01 };
\def \yM {1.0789593218788875e-05*0.7};
\def \xN {3.5355339059327342e+01  };
\def \yN {2.2627416997969580e-05*0.7};
\def \yQ {0.5*\yM + 0.5*\yN};

\draw (\xM, \yM) -- (\xN, \yN)--(\xM, \yN)--cycle;
\draw (\xM, \yQ) node [right] {3};

\end{loglogaxis}
\end{tikzpicture}
}
}
\subfigure[]{
\label{fig:cross errori caso b}
  \resizebox{0.45\linewidth}{!}{
\begin{tikzpicture}
\scriptsize
\begin{loglogaxis}[
legend entries={$||p-p_h||_{b,DG}$,  $||p-p_h||_{L^2(\Omega)}$, $||p^{\Gamma}- p_{\Gamma,h}||_{\Gamma, DG}$, $||p^{\Gamma}- p_{\Gamma,h}||_{L^2(\Gamma)}$},
legend pos=outer north east,
legend style={font=\tiny, at={(0.01,0.01)},anchor=south west},
legend cell align=left,
 legend style={fill=none,draw=none},
xlabel=$1/h$
]
\addplot [blue, mark=triangle*, line width=0.3mm] table {cross/caso_3/errH1.txt};
\addplot [magenta, mark=otimes*, line width=0.3mm] table {cross/caso_3/errL2.txt};
\addplot [teal, mark=diamond*, line width=0.3mm] table {cross/caso_3/errfracH1.txt};
\addplot [green, mark=square*, line width=0.3mm] table {cross/caso_3/errfracL2.txt};

\def \xA {4.5254833995939038e+01};
\def \yA {4.8828125000000011e-04*3.1};
\def \xB {3.5355339059327342e+01 };
\def \yB {8.0000000000000145e-04*3.1};
\def \yC {0.5*\yA + 0.5*\yB};

\draw (\xA, \yA) -- (\xB, \yB)--(\xB, \yA)--cycle;
\draw (\xB, \yC) node [left] {2};

\def \xD {4.5254833995939038e+01 };
\def \yD {1.0789593218788875e-05*0.3};
\def \xE {3.5355339059327342e+01  };
\def \yE {2.2627416997969580e-05*0.3};
\def \yF {0.5*\yD + 0.5*\yE};

\draw (\xD, \yD) -- (\xE, \yE)--(\xE, \yD)--cycle;
\draw (\xE, \yF) node [left] {3};

%

\def \xG {4.5254833995939038e+01};
\def \yG {4.8828125000000011e-04*7.5};
\def \xH {3.5355339059327342e+01 };
\def \yH {8.0000000000000145e-04*7.5};
\def \yL {0.5*\yG + 0.5*\yH};

\draw (\xG, \yG) -- (\xH, \yH)--(\xG, \yH)--cycle;
\draw (\xG, \yL) node [right] {2};

\def \xM {4.5254833995939038e+01 };
\def \yM {1.0789593218788875e-05*0.6};
\def \xN {3.5355339059327342e+01  };
\def \yN {2.2627416997969580e-05*0.6};
\def \yQ {0.5*\yM + 0.5*\yN};

\draw (\xM, \yM) -- (\xN, \yN)--(\xM, \yN)--cycle;
\draw (\xM, \yQ) node [right] {3};

\end{loglogaxis}
\end{tikzpicture}
}
}

\caption{Example 4: Computed errors in the bulk and in the fractures as a function of the inverse of the mesh size (loglog scale). Case (a) on the left and case (b) on the right.} 
\end{figure}
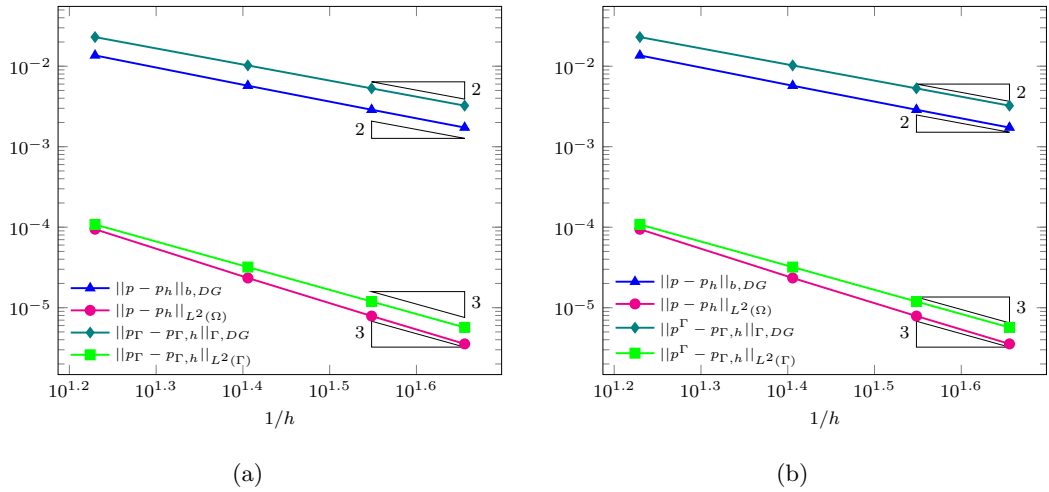

\bibliographystyle{abbrv}
\bibliography{biblio}{}

\end{document}